\newtheorem{theorem}{Theorem}[section]
\newtheorem{lemma}[theorem]{Lemma}
\theoremstyle{definition}
\newtheorem{definition}[theorem]{Definition}
\theoremstyle{remark}
\theoremstyle{assumption}
\numberwithin{equation}{section}
\newcommand{\sha}{\succ\mkern-14mu_s\;}
\begin{document}
	
	\date{\today}

\title[Triod twist cycles and circle rotations]{Triod twist cycles and circle rotations}

\author{Sourav Bhattacharya and Ashish Yadav}

\address[Dr. Sourav Bhattacharya and Ashish Yadav]
{Department of Mathematics, Visvesvaraya National Institute of Technology Nagpur,
	Nagpur, Maharashtra 440010,
	India}
	\email{souravbhattacharya@mth.vnit.ac.in}

\subjclass[2010]{Primary 37E05, 37E15; Secondary 37E45}

\keywords{triods, rotation numbers, patterns, triod-twist cycles, codes} 	
\begin{abstract}
	
We study the problem of relating cycles on  a \emph{triod} $Y$ to \emph{circle rotations}.  We prove that the simplest cycles on a \emph{triod}~$Y$ with a given \emph{rotation number}~$\rho$, called \emph{triod--twist cycles} are conjugate, via a piece-wise monotone map of \emph{modality} at most~$m + 3$, where~$m$ is the \emph{modality} of~$P$  to the rotation on~$S^1$ by angle~$\rho$, restricted to one of its cycles.

\end{abstract}

	\maketitle

\section{Introduction}\label{intro}

In many branches of mathematics, a fundamental objective is the classification of the objects under investigation. Such classifications are typically carried out by means of \emph{invariants}, that is, properties that remain unchanged under isomorphisms. In the setting of topological dynamical systems, the role of an isomorphism is played by  \emph{topological conjugacy}. Consequently, the classification problem reduces to identifying quantities that are preserved under topological conjugacies. In the case of interval maps, one may further impose the natural requirement that the conjugacy preserve \emph{orientation}. Remarkably, when the period of a periodic orbit (also called a cycle) is regarded as such an \emph{invariant}, the resulting theory exhibits a rich and intriguing structure.

 In 1964,  A.~N.~Sharkovsky~\cite{shatr} formulated a celebrated Theorem which gives a complete description of all possible sets of periods of periodic orbits of a continuous interval map. He introduced a total order on the set of  natural numbers, $\mathbb{N}$ known as the \emph{Sharkovsky ordering}: 
$$3\sha 5\sha 7\sha\dots\sha 2\cdot3\sha 2\cdot5\sha 2\cdot7\sha\dots$$
$$\sha\dots\sha 2^2\cdot3\sha 2^2\cdot5\sha 2^2\cdot7\sha\dots\sha 8\sha 4\sha 2\sha 1.$$

For $k\in\mathbb{N}$, let $Sh(k)$ denote the set of all integers $m$ satisfying $k\sha m$, including $k$ itself.  
Define $Sh(2^\infty)=\{1,2,4,8,\dots\}$, the set of all powers of $2$.  
Let $Per(f)$ denote the set of periods of cycles of a continuous map $f$.  Sharkovsky proved the following:

\begin{theorem}[\cite{shatr}]\label{t:shar}
	If $f:[0,1]\to[0,1]$ is continuous, $m\sha n$ and $m\in Per(f)$, then $n\in Per(f)$.  
	Consequently, there exists $k\in \mathbb{N}\cup\{2^\infty\}$ such that $Per(f)=Sh(k)$.  
	Conversely, for any $k\in \mathbb{N}\cup\{2^\infty\}$, there exists a continuous map $f:[0,1]\to[0,1]$ with $Per(f)=Sh(k)$.
\end{theorem}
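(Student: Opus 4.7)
The plan is to combine the classical \emph{Markov-graph} (loops-of-intervals) machinery with the \emph{Štefan cycle} structure lemma. Given a cycle $P$ of $f$ of period $m$, label the orbit points in increasing order $p_1 < p_2 < \cdots < p_m$ and let $I_j = [p_j, p_{j+1}]$ for $1 \leq j \leq m-1$. Form the directed graph $G_P$ with vertices $I_1, \ldots, I_{m-1}$ and an edge $I_j \to I_k$ whenever $f(I_j) \supseteq I_k$. The central \emph{covering lemma} states that every closed loop of length $\ell$ in $G_P$ produces a fixed point of $f^{\ell}$ whose orbit tracks the prescribed sequence of intervals; by choosing non-degenerate loops one extracts periodic orbits of exact period $\ell$.

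For the forcing direction, suppose $m \sha n$ and $m \in Per(f)$. I would first handle the case where $m = 2q+1$ is odd and at least $3$: invoke the Štefan lemma, which asserts that among all period-$m$ cycles the forcing-minimal ones carry the so-called \emph{Štefan permutation}, whose Markov graph contains an $(m-1)$-cycle plus one extra shortcut edge. The resulting graph admits loops of every length $\ell \geq m$ and of every positive even length, which via the covering lemma realize all $n$ with $m \sha n$. For general $m = 2^s q$ with $q \geq 3$ odd, apply this to the iterate $f^{2^s}$ and lift periodic orbits back to $f$. The pure power-of-two tail reduces by the same doubling argument applied inductively to $f^2$.

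For the converse, I would exhibit, for each $k \in \mathbb{N} \cup \{2^\infty\}$, an explicit continuous map realizing exactly $Sh(k)$. A one-parameter family of \emph{truncated tent maps} $x \mapsto \min\bigl(s(1 - \abs{2x-1}),\,h\bigr)$ realizes every $Sh(k)$ with $k$ finite by choosing the height $h$ so that the map possesses a Štefan-type orbit of the desired period but no shorter forced period; the extreme case $h = 1$ gives the full tent map with every period. The exceptional case $k = 2^\infty$ is realized by a Feigenbaum-like adding-machine map obtained as a limit of nested period-doublings.

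The genuine obstacle is the Štefan structure lemma itself. One has to show that if $P$ is minimal in the forcing order among period-$m$ orbits with $m$ odd, then $f$ has a fixed point lying between two distinguished consecutive points of $P$ and permutes the two sides in the Štefan manner: the rightmost point on one side maps to the leftmost on the other, and iterating traces out the entire orbit. The proof proceeds by a detailed case analysis of how $f$ flips the two halves of $P$ about the fixed point, together with an induction on $m$ in which any deviation from the Štefan pattern would, via the covering lemma, produce a shorter forced period and contradict minimality. Once this lemma is in hand, the translation from graph loops to genuine periodic points is routine, and the converse constructions are a direct verification.
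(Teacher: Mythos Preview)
The paper does not prove this theorem at all: Theorem~\ref{t:shar} is stated in the introduction as a classical background result and is attributed to Sharkovsky via the citation~\cite{shatr}, with no proof supplied. So there is no ``paper's own proof'' against which to compare your attempt.

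That said, your outline is the standard modern proof of Sharkovsky's theorem (Markov partition on the orbit, covering lemma, \v{S}tefan cycle structure for odd periods, doubling/iterate arguments for the $2^s q$ case, truncated tent maps and the Feigenbaum limit for the realization part). As a sketch it is correct in broad strokes. A couple of points you would need to tighten if you were actually writing this up: the covering lemma gives a periodic point of period \emph{dividing} the loop length, so you must check non-repetitiveness of the loop to ensure the exact period; and the reduction ``apply this to $f^{2^s}$ and lift'' needs the elementary but non-trivial fact relating $Per(f)$ to $Per(f^2)$ (a period-$k$ point of $f^2$ is a period-$k$ or period-$2k$ point of $f$, and one must argue carefully that the forced periods survive the lift). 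The realization of $Sh(k)$ via truncated tent maps also requires a monotonicity argument in the parameter, not just a single choice of height. None of these are serious obstacles, but they are the places where a referee would ask for detail.
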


Theorem \ref{t:shar} initiated a new field of research called \emph{combinatorial one-dimensional dynamics}.  Subsequently, research has progressed in many directions, including extensions of Theorem \ref{t:shar} to more general spaces, development of refined coexistence rules, and investigations involving more complex invariant objects such as the \emph{homoclinic trajectories}.

The present work is primarily motivated by the first two of these
directions. Once one moves beyond the interval, the dynamical picture
becomes considerably more intricate. Even the simplest graph-like
spaces introduce \emph{branching}, thereby destroying the linear order
that underpins much of classical interval dynamics. Among
such spaces, the \emph{triods} provides the most basic example of a
\emph{branched one-dimensional continuum}. Consequently, a detailed understanding of
periodic orbits on \emph{triods} is not only a natural extension of interval dynamics,  but a  natural and necessary step toward a broader theory of dynamics on graphs and related spaces. The
main objective of this paper is to investigate how periodic orbits of
maps on the \emph{triod} $Y$ relate to the simpler and well-understood model of
\emph{circle rotations}, restricted to their periodic orbits.

An \emph{$n$-od} (or \emph{$n$-star}) $X_n$ is defined as the set of complex numbers $z\in\mathbb{C}$ for which $z^n\in[0,1]$.  
Geometrically, it is the union of $n$ copies of the interval $[0,1]$, attached at a single common point called the \emph{branching point}, denoted by $a$.  
Each component of $X_n \setminus\{a\}$ is called a \emph{branch} of $X_n$. $X_2$ is homeomorphic to an interval, while $X_3$ is called a \emph{triod} and denoted by $Y$.  
Dynamics on $X_n$ have received considerable attention not only due to their intrinsic mathematical appeal, but also because they arise naturally as quotient models for higher-dimensional systems with \emph{invariant foliations} or \emph{surface homeomorphisms}.  They also appear in complex dynamics---for instance, in the study of \emph{Hubbard trees}. 

The problem of describing the possible periods of cycles for a continuous map $f:Y\to Y$ fixing the \emph{branching point} $a$ was first considered in \cite{alm98,Ba}, and later refined in \cite{almnew}.  
It was observed there that the resulting sets of periods form unions of ``initial segments'' of certain linear orders on rational numbers in $(0,1)$ with denominator at most $3$.  
However, this phenomenon arose from computational evidence and lacked a theoretical explanation.

A breakthrough came in 2001, when Blokh and Misiurewicz~\cite{BMR} introduced  \emph{rotation theory} for maps on \emph{triods}, providing an explanation to the earlier observations. It is worth emphasizing that the notion of a \emph{rotation number} traces its origins to the pioneering work of Poincar\'e in his study of \emph{circle homeomorphisms} (see~\cite{poi}).

We now summarize \emph{rotation theory} for \emph{triods} as developed in \cite{BMR}.  
For points $x,y\in Y$, write $x>y$ if they lie on the same \emph{branch} of $Y$ and $x$ is farther from the \emph{branching point} $a$ than $y$, and write $x\geqslant  y$ if $x>y$ or $x=y$.  
For $A\subset Y$, denote its \emph{convex hull} by $[A]$.

Two cycles $P$ and $Q$ in $Y$ are said to be \emph{equivalent} if there exists a homeomorphism $h:[P]\to[Q]$ that conjugates the dynamics and fixes each \emph{branch} of $Y$.   
The equivalence classes of cycles under this relation are called \emph{patterns}.  
If $f\in\mathcal{U}$ and $P$ is a cycle of $f$ belonging to the equivalence class $A$, we say that $P$ \emph{is a representative of} (or \emph{exhibits})  \emph{pattern} $A$.

A cycle (and hence its \emph{pattern}) is called \emph{primitive} if all points of the cycle lie on distinct \emph{branches} of $Y$. A \emph{pattern} $A$ is said to \emph{force} a \emph{pattern} $B$ if every map $f \in \mathcal{U}$ having a cycle of \emph{pattern} $A$ must also have a cycle of \emph{pattern} $B$. It follows from \cite{alm98,BMR} that if $A$ \emph{forces} $B$ with $A \neq B$, then $B$ \emph{does not force} $A$. Similarly, a cycle $P$ is said to \emph{force} a cycle $Q$ if the \emph{pattern} of $P$ forces the \emph{pattern} of $Q$.

	A map $f $ is called $P$-\emph{linear} for a cycle $P$ if it fixes $a$, is \emph{affine} on every component of $[P] - (P \cup \{ a\})$ and also constant on every component of $Y - [P]$ where $[P]$ is the convex hull of $P$.  The next result gives an effective criterion for determining which \emph{patterns} are forced by a given one.

\begin{theorem}[\cite{alm98,BMR}]\label{forcing}
	Let $f$ be a $P$-linear map, where $P$ is a cycle exhibiting pattern $A$.
	Then a pattern $B$ is forced by $A$ if and only if $f$ has a cycle $Q$ exhibiting pattern $B$.
\end{theorem}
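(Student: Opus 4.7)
The theorem splits into two implications. The forward one is immediate from the definition: since $f$ is $P$-linear with $P$ exhibiting pattern $A$, the map $f$ itself lies in $\mathcal{U}$ and carries a cycle of pattern $A$, so any pattern forced by $A$ must already appear as a cycle of $f$.

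For the converse, assume $f$ has a cycle $Q$ of pattern $B$ and let $g\in\mathcal{U}$ be an arbitrary map carrying a cycle $P'$ of pattern $A$; one must produce a cycle of $g$ of pattern $B$. I would adapt the classical Markov-graph/covering technique from interval dynamics. Partition $[P]$ into its \emph{basic intervals}---the closures of the components of $[P]\setminus(P\cup\{a\})$---and form the directed graph $\Gamma_P(f)$ whose vertices are these intervals and whose edges $I\to J$ record the relation $f(I)\supseteq J$. Because $f$ is affine on each basic interval $I$ and both endpoints of $I$ lie in $P$, the image $f(I)$ is exactly the unique arc in $Y$ joining the two prescribed images; in particular, the edge set of $\Gamma_P(f)$ is determined entirely by the pattern $A$.

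The heart of the argument is the following covering lemma. Let $h:[P]\to[P']$ be the pattern-preserving homeomorphism and set $I'=h(I)$ for each basic interval $I$. Since $g$ is continuous on $I'$, the set $g(I')$ is connected and contains the images under $g$ of the endpoints of $I'$, which, by the definition of pattern, coincide with the $h$-images of the corresponding endpoint-images of $I$ under $f$. Because in the dendrite $Y$ the unique minimal connected set joining two given points is the arc between them, $g(I')$ contains this arc, so every edge of $\Gamma_P(f)$ lifts to an edge of $\Gamma_{P'}(g)$. Now the cycle $Q=\{q_0,q_1,\ldots,q_{n-1}\}$ of $f$, listed in orbit order, traces a loop $J_0\to J_1\to\cdots\to J_{n-1}\to J_0$ in $\Gamma_P(f)$ with $q_i\in J_i$; by the covering lemma the same loop sits in $\Gamma_{P'}(g)$, and a standard nested-intersection/fixed-point argument applied to $g^n$ on $J_0'\cap g^{-1}(J_1')\cap\cdots\cap g^{-(n-1)}(J_{n-1}')$ produces a periodic point $q_0'$ of $g$ with $g^i(q_0')\in J_i'$ for each $i$ and $g^n(q_0')=q_0'$.

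The main obstacle, and the most delicate part of the proof, is verifying that the orbit $Q'$ of $q_0'$ genuinely exhibits pattern $B$ rather than some coarser pattern that $B$ itself forces. The branchwise linear order of $Q'$ agrees with that of $Q$ essentially automatically, because each $q_i'$ lies in $h(J_i)$, and this basic interval encodes both the branch of $Y$ and the relative position inside $[P']$. Ensuring that the minimal period of $Q'$ equals $n$ requires a loop-uniqueness check: the loop traced by $Q$ in $\Gamma_P(f)$ must not factor as a nontrivial iterate of a shorter loop, and if it does one must pass to a primitive sub-loop and redo the fixed-point argument there. Edge cases---typically when the nested intersection collapses onto points of $P'$ already accounted for---can be handled either by a mild perturbation of $g$ near $P'$ followed by a semicontinuity argument, or by replacing the exact loop by a shift-equivalent loop in $\Gamma_P(f)$.
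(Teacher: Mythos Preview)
The paper does not prove this theorem at all: it is stated in the introduction as a background result, with attribution to \cite{alm98,BMR}, and is used thereafter without argument. So there is no ``paper's own proof'' to compare against; what you have written is a sketch of the classical Markov--partition/covering argument that those references carry out.

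As a sketch your outline is the right one, and the forward implication is handled correctly. For the converse, the genuine difficulty is exactly where you flag it but do not resolve it: showing that the periodic orbit $Q'$ produced by the nested--intersection argument has pattern $B$ and not merely some pattern forced by $B$. Your remarks about ``passing to a primitive sub-loop'' and ``mild perturbation of $g$'' are gestures rather than arguments; in the actual proofs in \cite{alm98} (and in the interval case in Alsed\`a--Llibre--Misiurewicz's book) this is handled by working not with an arbitrary loop representing $Q$ but with a carefully chosen one, and by exploiting the linearity of $f$ on basic intervals to control exactly which points of $Q$ can lie on boundaries of basic intervals. There is also a triod--specific issue you pass over: the branch point $a$ is an interior vertex of the Markov partition with three adjacent basic intervals, and the covering relation $g(I')\supseteq J'$ requires knowing that $g(a)=a$, which is part of the standing hypothesis $g\in\mathcal U$ but should be invoked explicitly when you argue that $g(I')$ contains the arc between the endpoint images. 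If you want a self--contained proof you would need to fill in both of these points; as written, the proposal is an accurate roadmap but not a proof.
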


We now state \emph{rotation theory} for \emph{triods} as formulated in \cite{BMR}.
We consider $Y$ embedded in the plane with the \emph{branching point} at the origin, and its \emph{branches} being straight-line segments. Label the \emph{branches} of $Y$ in clockwise order as $B = \{ b_i \mid i = 0,1,2 \}$, where indices are taken modulo $3$.  Let $ f \in \mathcal{U}$ and  $P \subset Y- \{a \}$  be finite. By an \emph{oriented graph} corresponding to $P$,  we shall mean a graph $G_P$ whose vertices are elements of $P$ and arrows are defined as follows. For $x,y \in P$, we will say that there is an arrow from $x$ to $y$ and write $x \to y$ if there exists $ z \in Y$ such that $ x \geqslant z$ and $ f(z) \geqslant y$ (See Figure \ref{directed edge}). We will refer to a \emph{loop}  in the \emph{oriented graph} $G_P$ as a \emph{``point" loop} in $Y$. We call a \emph{point loop} in $Y$ \emph{elementary} if it passes through every vertex of $G_P$ at most once. If $P$ is a cycle of period $n$, then the \emph{loop} $\Gamma_P : x \to f(x) \to f^2(x) \to f^3(x) \to \dots f^{n-1}(x) \to x$, $x \in P$ is called the \emph{fundamental point loop} associated with $P$

\begin{figure}[H]
	\centering
	\includegraphics[width=0.4\textwidth]{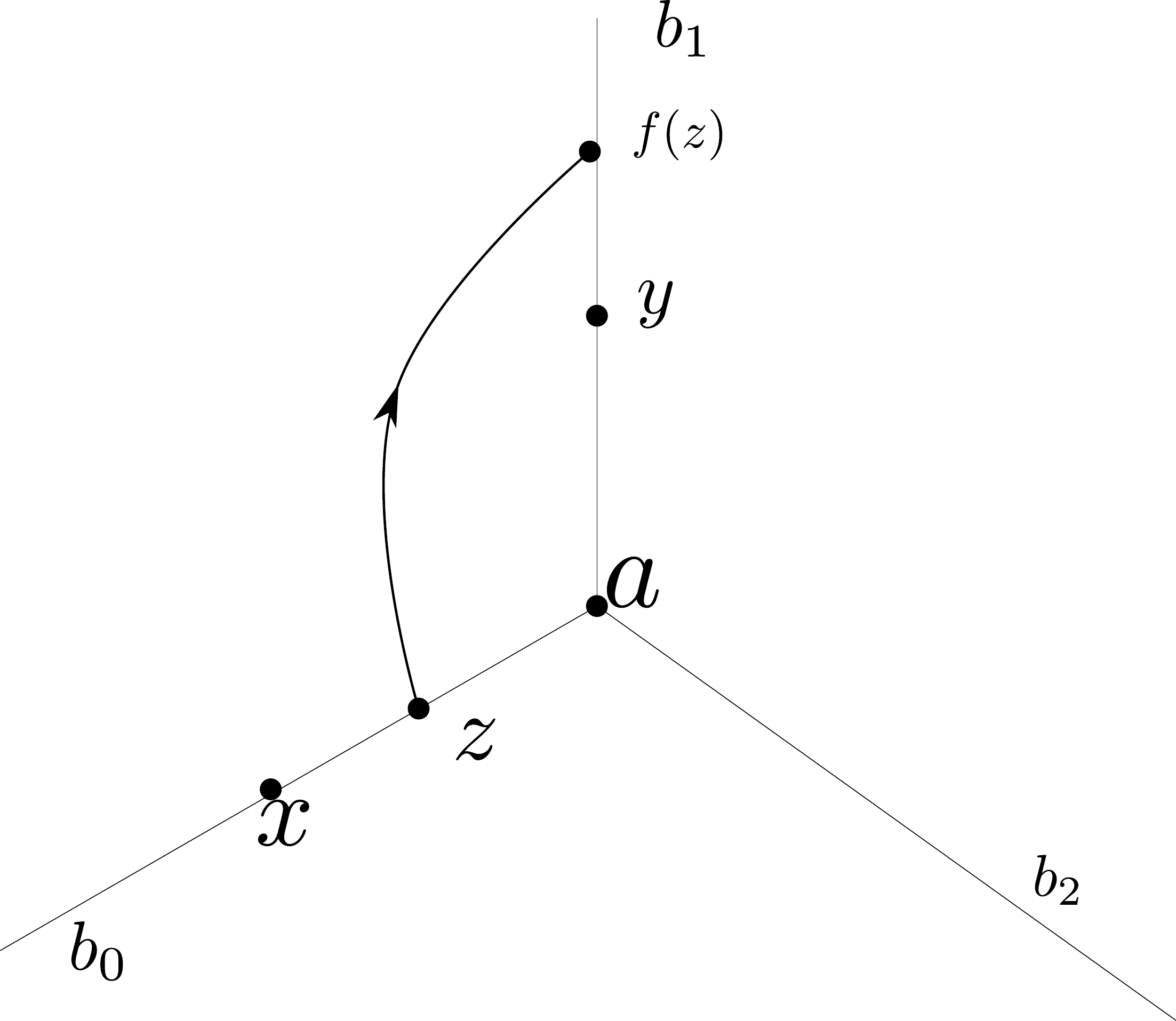}
	\caption{\emph{Directed edge} from $x$ to $y$}
	\label{directed edge}
\end{figure}

Let us assume that the \emph{oriented graph} $G_P$ corresponding to $P$ is \emph{transitive} (that is,  there is a \emph{path} in $G_P$ from every vertex to every vertex). Observe that, if $P$ is a cycle, then $G_P$ is always transitive.  Let $A$ be the set of all arrows of the \emph{oriented graph} $G_P$. We define a \emph{displacement function} $ d : A \to \mathbb{R}$  by $d(u \to v) = \frac{k}{3}$ where $ u \in b_i$ and $v \in b_j$ and $ j = i +k $ (modulo 3).  For a \emph{point} \emph{loop} $\Gamma$ in $G_P$ denote by $ d(\Gamma)$ the sum of the values of the \emph{displacement} $d$ along the \emph{loop}. In our model of $Y$, this number tells us how many times we revolved around the origin in the clockwise sense. Thus, $ d(\Gamma)$ is an integer. We call  $ rp(\Gamma) = (d(\Gamma), |\Gamma|)$  and $ \rho(\Gamma) = \frac{d(\Gamma)}{ |\Gamma|}$, the \emph{rotation pair} and \emph{rotation number} of the \emph{point loop}, $\Gamma$  respectively (where $|\Gamma|$ denotes length of $\Gamma$).

The closure of the set of \emph{rotation numbers} of all \emph{loops} of the \emph{oriented graph},  $G_P$ is called the \emph{rotation set} of $G_P$ and denoted by $L(G_P)$. By \cite{zie95}, $L(G_P)$ is equal to the smallest interval containing the \emph{rotation numbers} of all \emph{elementary loops} of $G_P$. Following the notation in~\cite{BMR}, a \emph{rotation pair} $rp(\Gamma) = (mp, mq)$, where $p, q, m \in \mathbb{N}$ and $\gcd(p, q) = 1$, can be represented equivalently as $mrp(\Gamma) = (t, m)$, where $t = p/q$.  
The pair $(t, m)$ is called the \emph{modified rotation pair} (\emph{mrp}) associated with the \emph{loop} $\Gamma$.  For a periodic orbit $P$, the \emph{rotation number}, \emph{rotation pair}, and \emph{modified rotation pair} are defined to be those of its \emph{fundamental point loop} $\Gamma_P$.  
Similarly, for a given \emph{pattern} $A$, these quantities are determined from any cycle $P$ that \emph{exhibits} $A$.  
The \emph{rotation interval forced} by a pattern $A$ is the \emph{rotation set} $L(G_P)$ of the \emph{oriented graph} $G_P$ associated with such a cycle.  
We denote by $mrp(A)$ the set of all \emph{modified rotation pairs} corresponding to \emph{patterns} that are \emph{forced} by $A$.  

\begin{figure}[H]
	\centering
	\includegraphics[width=0.4\textwidth]{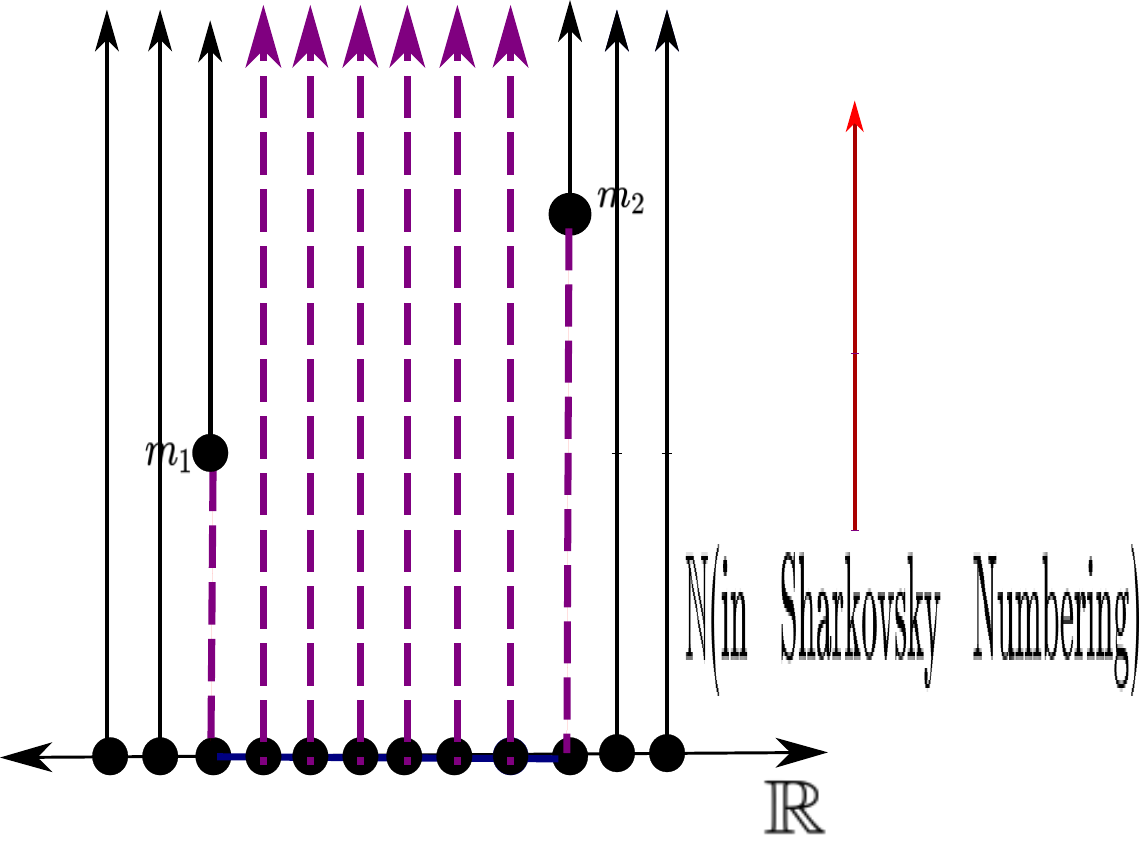}
	\caption{Graphical illustration of \emph{modified rotation pairs} (\emph{mrp}) on the real line with attached prongs.}
	\label{convex_hull}
\end{figure}

The notion of \emph{modified rotation pairs} has a convenient geometric representation (see Figure~\ref{convex_hull}).  
Consider the real line with a \emph{prong} attached at each rational point, while irrational points are associated with degenerate prongs.  
On each prong, the set $\mathbb{N} \cup \{2^{\infty}\}$ is arranged following the \emph{Sharkovsky ordering} $\sha$, placing $1$ closest to the real line and $3$ farthest from it.  
Points that lie directly on the real line are labeled as $0$.  
The union of the real line and its prongs is denoted by $\mathbb{M}$.  
Each \emph{modified rotation pair} $(t, m)$ corresponds to the point on $\mathbb{M}$ that represents the value $m$ on the prong attached at $t$.  
No actual \emph{rotation pair} corresponds to $(t, 2^{\infty})$ or $(t, 0)$.  For two points $(t_1, m_1)$ and $(t_2, m_2)$ in $\mathbb{M}$, their \emph{convex hull} is defined as  
$[(t_1, m_1), (t_2, m_2)] = \{ (t, m) \mid t_1 < t < t_2, \text{ or } (t = t_i \text{ and } m \in Sh(m_i)), \, i = 1, 2 \}.$

\begin{definition}\label{regular:defn}
	A \emph{pattern} $A$ is called \emph{regular} if it does not \emph{force} any \emph{primitive pattern} of period $2$.  
	A cycle $P$ is termed \emph{regular} if it exhibits a \emph{regular pattern}.
\end{definition}

Due to the transitivity of the forcing relation, any \emph{pattern} forced by a \emph{regular} one must itself be \emph{regular}. A map $f \in \mathcal{U}$ is called \emph{regular} if all of its cycles are \emph{regular}, and we denote the family of all such maps by $\mathcal{R}$.
\begin{theorem}[\cite{BM2}]\label{result:1}
	Let $A$ be a regular pattern associated with a map $f \in \mathcal{R}$.
	Then there exist patterns $B$ and $C$ with modified rotation pairs $(t_1 , m_1)$ and $(t_2 , m_2)$ such that
	$mrp(A) = [(t_1, m_1),(t_2, m_2)].$
\end{theorem}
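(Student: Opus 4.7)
The plan is to identify $mrp(A)$ with the convex hull $[(t_1,m_1),(t_2,m_2)]$ in $\mathbb{M}$, where $[t_1,t_2]$ is the rotation interval forced by $A$ and $m_1,m_2$ are suitably extremal Sharkovsky classes at its endpoints. First, I would invoke Ziemian's theorem \cite{zie95} to conclude that the set $L(G_P)$ of rotation numbers of loops in the oriented graph $G_P$ associated with a representative $P$ of $A$ is a closed interval $[t_1,t_2]$, with both endpoints realized as rotation numbers of elementary loops of $G_P$. In particular, $t_1$ and $t_2$ are rational unless $L(G_P)$ degenerates to a single irrational value, in which case one of the endpoint descriptions is automatic.

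Second, I would handle the interior of the hull. Fix a rational $t = p/q$ (in lowest form) with $t_1 < t < t_2$ and any $m \in \mathbb{N}$; I need a forced pattern with mrp equal to $(t,m)$. Since $t$ lies strictly between the rotation numbers of two elementary loops of $G_P$, one can concatenate these loops with appropriate multiplicities to build a point loop $\Gamma$ of length $mq$ with $d(\Gamma) = mp$. By Theorem~\ref{forcing} applied to the $P$-linear model of $f$, this loop is realized by a cycle $Q$ of $f$; then $Q$ exhibits a pattern $B$ forced by $A$ whose mrp is exactly $(t,m)$. This produces every interior point of the claimed hull.

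Third, I would analyze the endpoints. Consider the subgraph $G_i \subseteq G_P$ formed by those elementary loops whose rotation number equals $t_i$; by the extremality of $t_i$, every loop contained in $G_i$ still has rotation number $t_i$. Combinatorially, $G_i$ behaves like the transition graph of an interval map, so a Sharkovsky-type analysis on $G_i$ identifies the Sharkovsky-minimal period $m_i$ of a loop in $G_i$ and shows that all and only periods in $Sh(m_i)$ are realized there. Lifting back through Theorem~\ref{forcing} yields that the mrps forced by $A$ at rotation number $t_i$ are exactly $\{(t_i, m) : m \in Sh(m_i)\}$. The regularity hypothesis on $A$ enters here to rule out primitive period-$2$ patterns that would otherwise blur the endpoint description, ensuring that $m_i$ is well defined and that the forced period set is a clean Sharkovsky tail.

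The main obstacle I expect is precisely this endpoint analysis: proving that the subgraph $G_i$ admits a Sharkovsky-type theorem compatible with the lifting to patterns of $A$, and verifying that every period in $Sh(m_i)$ is attained by some loop of $G_i$ that corresponds to a genuine cycle of the $P$-linear map (rather than, say, a loop that only realizes a proper multiple of the expected period). Once this reduction to an interval-type Sharkovsky theorem is in place, combining the interior description from step two with the endpoint description from step three gives $mrp(A) = [(t_1,m_1),(t_2,m_2)]$, completing the argument.
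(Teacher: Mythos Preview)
The paper does not contain a proof of this theorem: it is quoted from the literature (the attribution \cite{BM2} in the theorem header, and the absence of any proof environment following the statement, make this explicit). So there is nothing in the present paper to compare your proposal against. If you want to check your outline, you need to consult \cite{BMR} and \cite{BM2} directly, where the actual argument for the triod (respectively interval) setting is carried out.

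That said, as a sketch your plan is broadly aligned with how results of this type are proved: one does establish that the rotation set is an interval via \cite{zie95}, one does realize interior mrps by concatenating extremal loops and invoking the correspondence between point loops and genuine cycles (Theorem~\ref{loops:orbits:connection:1} here, which is what you should cite rather than Theorem~\ref{forcing}), and the delicate part is indeed the endpoint analysis. But your proposal is, by your own admission, only a plan: the sentence ``The main obstacle I expect is precisely this endpoint analysis'' flags exactly the place where the real work lies, and you have not supplied it. In particular, the claim that the endpoint subgraph $G_i$ ``behaves like the transition graph of an interval map'' and therefore admits a Sharkovsky-type classification is not something one can simply assert; in the original papers this requires a careful reduction and is the substantive content of the theorem. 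You also need to verify that a loop of length $mq$ actually gives rise to a cycle of period $mq$ (not a proper divisor), which you note but do not resolve. So as written this is a reasonable roadmap, not a proof.
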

Theorem~\ref{result:1} provides a complete description of the \emph{modified rotation pairs} arising from cycles of \emph{regular maps} defined on a \emph{triod}. In particular, it serves as a useful tool in deducing the dynamics of \emph{regular maps} $f \in \mathcal{R}$ by relying solely on minimal combinatorial data. 

The simplest \emph{patterns} on \emph{triods} corresponding to a prescribed \emph{rotation number} $\rho$, known as \emph{triod-twists}, were introduced in \cite{BB5}.
\begin{definition}
	A \emph{regular pattern} $A$ is called a \emph{triod-twist pattern} if it does not \emph{force} any other pattern having the same \emph{rotation number}.
\end{definition}
A cycle $P$ on the \emph{triod} $Y$ is called a \emph{triod-twist cycle} if it \emph{exhibits} a \emph{triod-twist pattern}. Equivalently, a cycle~$P$ on $Y$ is a \emph{triod-twist cycle} if there exists a map $f:Y \to Y$ for which $P$ is the \emph{unique} cycle with \emph{rotation number} $\rho$.
In~\cite{BB5}, such cycles were thoroughly studied and fully classified. Additionally, the dynamics of all \emph{unimodal triod-twist} cycles associated with a fixed \emph{rotation number} were described.

This naturally raises the following question: Does a \emph{triod-twist} cycle with \emph{rotation number}~$\rho$ admit a meaningful connection to a \emph{rotation} of the circle by angle~$\rho$, restricted to one of its cycles? More specifically, can one establish an explicit dynamical correspondence between the two? In other words, does the presence of \emph{branching} fundamentally alter the \emph{rotational nature} of these \emph{minimal cycles}, or does \emph{rotational rigidity} persist even in this more complex setting? The main purpose of this paper is to answer this question affirmatively, in case of \emph{triods}.

We show that  \emph{triod–twist cycles} are \emph{topologically conjugate} to  rotation by angle $\rho$ on $S^1$, restricted on one of its cycles,  under a piecewise monotone map of \emph{modality} at most $m+3$, where $m$ denotes the \emph{modality} of the cycle~$P$. Our result generalizes the analogous result obtained by Blokh and Misiurewicz for interval maps (see \cite{BM2}), and shows that \emph{rotational rigidity} persists despite \emph{branching}. Moreover, it shows that the \emph{modality} of the cycle alone yields explicit quantitative control over the complexity of the \emph{conjugacy}.

The paper is organized as follows. Section~2 introduces the necessary
preliminaries and notation. Section~3 contains the proofs of the main
results.
	
		\section{Preliminaries}\label{preliminaries}

	\subsection{Monotonicity}\label{monotonicity}
	
	A continuous map $f:Y \to Y$ is said to be \emph{monotone} on a subset $U \subset Y$ if the pre-image of every point $v \in f(U)$ is a \emph{connected} subset of $U$.  
	A subset $U \subset Y$ is called a \emph{lap} of $f$ if it is a \emph{maximal open set} on which $f$ is \emph{monotone}, maximality being understood with respect to inclusion.  
	The total number of \emph{laps} of a map $f \in \mathcal{U}$ is called its \emph{modality}.  
	For a periodic orbit (cycle) $P$, the \emph{modality of $P$} is defined as the \emph{modality} of the associated $P$-\emph{linear} map.
	
	\subsection{Loops of Points}\label{loops}
	
	In Section~\ref{intro}, we introduced the \emph{oriented graph} $G_P$ associated with a finite subset $P \subset Y \setminus \{a\}$ and defined the notion of \emph{point loops}.  
	The result below shows that analyzing \emph{point loops} in $G_P$—when a cycle $P$ \emph{exhibits} a \emph{pattern} $A$—is sufficient for determining all \emph{patterns} that are \emph{forced} by $A$.
	
	\begin{theorem}[\cite{BMR}]\label{loops:orbits:connection:1}
		The following statements hold:
		\begin{enumerate}
			\item For any point loop $x_0 \to x_1 \to \dots \to x_{m-1} \to x_0$ in $Y$, there exists a point $y \in Y \setminus \{a\}$ such that $f^m(y)=y$, and for each $k=0,1,\dots,m-1$, the points $x_k$ and $f^k(y)$ lie on the same branch of $Y$.
			\item If $f$ is a $P$-linear map corresponding to a nontrivial cycle $P \neq \{a\}$ and $y\neq a$ is a periodic point of $f$ of period $q$, then there exists a point loop  
			$x_0 \to x_1 \to \dots \to x_{q-1} \to x_0$ in $Y$ satisfying $x_i \geqslant f^i(y)$ for all $i$.
		\end{enumerate}
	\end{theorem}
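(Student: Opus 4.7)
The plan is to handle the two parts separately: part~(1) is an itinerary/horseshoe--type argument producing a periodic point tracking a prescribed \emph{point loop}, while part~(2) is a combinatorial ``readoff'' that reconstructs a \emph{point loop} from the orbit of a periodic point of a $P$-linear map.

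For part~(1), I would first translate each arrow $x_k \to x_{k+1}$ into a covering statement about the segments $J_k := [a,x_k]$. Each $J_k$ lies in a single \emph{branch} of $Y$, hence is an honest compact interval. By definition of the arrow there exists $z_k \in Y$ with $x_k \geqslant z_k$ and $f(z_k) \geqslant x_{k+1}$; in particular $z_k \in J_k$ and $f(z_k)$ lies on the \emph{branch} of $x_{k+1}$ at or beyond $x_{k+1}$. Since $f(a)=a$ and $f$ is continuous, $f([a,z_k])$ is a connected subset of $Y$ containing both $a$ and $f(z_k)$, hence contains the full segment from $a$ to $f(z_k)$ and in particular $J_{k+1}$. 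Thus $f(J_k)\supseteq J_{k+1}$ for every $k$, with $J_m=J_0$.

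Given this chain of coverings I would then invoke the standard itinerary lemma: pulling back iteratively, choose nested closed subintervals $I_0\supseteq I_1\supseteq\dots\supseteq I_m$ of $J_0$ with $f^k(I_k)=J_k$ for each $k$. The inductive step is an intermediate value argument for $f$ restricted to a subinterval of $J_{k-1}$ (legitimate because $J_{k-1}$ sits inside a single \emph{branch}). After $m$ steps, $f^m(I_m)=J_0\supseteq I_m$, so $f^m$ restricts to a continuous self-covering of the compact interval $I_m$; a standard fixed-point argument (the graph of $f^m$ crosses the diagonal in $I_m$) provides $y\in I_m$ with $f^m(y)=y$ and $f^k(y)\in J_k$ for all $k$. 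Choosing $I_m$ to avoid the endpoint $a$ (possible because each $J_k$ has positive length so the preimage chain can be shrunk away from $a$) ensures $y\neq a$, and $f^k(y)\in J_k$ places $f^k(y)$ on the \emph{branch} of $x_k$, as required.

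For part~(2), the key observation is that $[P]$ is forward invariant under a $P$-linear map: $f(a)=a$, $f(P)\subseteq P$, and on each component of $[P]-(P\cup\{a\})$ the affine map sends the endpoints into $P\cup\{a\}$, so the image is a subsegment of $[P]$. Hence no nontrivial periodic orbit can sit in the constant tails of $Y-[P]$, and $y,f(y),\dots,f^{q-1}(y)\in[P]\setminus\{a\}$. For each $i$, let $x_i$ be the point of $P$ lying on the \emph{branch} of $f^i(y)$ chosen canonically to satisfy $x_i\geqslant f^i(y)$ (such a point exists since $f^i(y)\in[P]$). With these choices, $z:=f^i(y)$ witnesses the arrow $x_i\to x_{i+1}$: indeed $x_i\geqslant z$ by construction and $f(z)=f^{i+1}(y)\geqslant x_{i+1}$. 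The canonical choice (e.g.\ the unique closest such point of $P$ to $f^i(y)$) forces $x_q=x_0$, so the sequence closes into a \emph{point loop} of length $q$ as required.

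The main obstacle is the pullback/fixed-point step in part~(1): one must justify the intermediate value arguments in the \emph{triod} setting, exploiting that each $J_k$ lies in a single \emph{branch} and that the continuous connected image of an interval in $Y$ ``fills in'' the segment from $a$ to any of its points. Once this one-dimensional technicality is cleanly formalized, the rest of part~(1) is the familiar Markov/itinerary argument, and part~(2) reduces to bookkeeping about $P$-linear maps and the definition of arrows in $G_P$.
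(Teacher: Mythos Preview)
The paper does not prove this statement; it is cited from \cite{BMR} as a preliminary, so there is no in-paper argument to compare against. Your sketch, however, has genuine gaps in both parts.

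For part~(1), the claim that one can ``choose $I_m$ to avoid the endpoint $a$'' fails in general. Take $f$ acting on one branch $b_0\cong[0,1]$ by $f(t)=2t$ for $t\le\tfrac12$ and $f(t)=1$ for $t\ge\tfrac12$, collapsing the other two branches to $a$, and set $P=\{\tfrac12\}\subset b_0$. The arrow $p\to p$ holds, giving a point loop of length~$1$. Here $J_0=[a,\tfrac12]$ and $f^{-1}(a)\cap J_0=\{a\}$, so \emph{every} subinterval of $J_0$ mapping onto $J_0$ must contain $a$; the minimal pullback is $I_1=[a,\tfrac14]$ and its only fixed point under $f$ is $a$. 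The theorem is nonetheless true (take $y=1$), but this $y$ lies outside $J_0$, so your construction cannot reach it. The covering $f(J_k)\supseteq J_{k+1}$ is correct; what fails is the extraction of a fixed point distinct from the common endpoint $a$, and that requires a genuinely different argument.

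For part~(2), the witness $z=f^i(y)$ does not verify $x_i\to x_{i+1}$: you chose $x_{i+1}\ge f^{i+1}(y)$, so $f(z)=f^{i+1}(y)\le x_{i+1}$, which is the wrong inequality (the arrow needs $f(z)\ge x_{i+1}$). The repair uses $P$-linearity in an essential way: $f^i(y)$ lies in a $P$-interval with endpoints in $P\cup\{a\}$, $f$ is affine there, and hence one of the two endpoint images lies on the branch of $f^{i+1}(y)$ and dominates it; being in $P$, that image then dominates $x_{i+1}$ as well. The correct witness is that endpoint, not $f^i(y)$ itself.
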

	
	\subsection{Colors of Points}\label{colors}
	
	To encode the arrows in the oriented graph $G_P$ for $P \subset Y \setminus \{a\}$ finite, we use the \emph{color convention} from~\cite{BMR}.  
	For a \emph{directed edge} $u \to v$ in $G_P$, the \emph{color} is assigned depending on the \emph{displacement} $d(u \to v)$:  
	\emph{green} if $d(u \to v)=0$, \emph{black} if $d(u \to v)=\tfrac{1}{3}$, and \emph{red} if $d(u \to v)=\tfrac{2}{3}$.  
	If $P$ is a cycle of a map $f \in \mathcal{R}$, the \emph{color} of a point $x \in P$ is defined as the \emph{color} of the arrow $x \to f(x)$ in the \emph{fundamental point loop} $\Gamma_P$.  
	A \emph{loop} containing only \emph{black arrows} will be referred to as a \emph{black loop}.
	
	\begin{theorem}[\cite{BMR}]\label{black:loop:length:3}
		Let $f \in \mathcal{R}$ and let $P$ be a cycle of $f$. Then:
		\begin{enumerate}
			\item Every point $x \in P$ lies on a black loop of length $3$.
			\item If $x$ is green, then $x > f(x)$.
			\item The cycle $P$ has at least one point on each branch of $Y$.
			\item The cycle $P$ necessarily forces a primitive $3$-cycle.
		\end{enumerate}
		Moreover, there exists an ordering $\{b_i \mid i=0,1,2\}$ (indices taken mod~$3$) of the branches of $Y$ such that the points $p_i \in P$, $i=0,1,2$, that lie closest to the branch point $a$ on each branch $b_i$, are black.  
		This ordering is referred to as the canonical ordering of the branches of $Y$.
	\end{theorem}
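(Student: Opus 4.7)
The strategy is to leverage the regularity hypothesis---no forced primitive $2$-cycle, equivalently no loop in $G_P$ of modified rotation pair $(1/2,\,2)$---together with the orbit-loop correspondence of Theorem~\ref{loops:orbits:connection:1} and the arithmetic of the displacement function on $G_P$. Parts~(3) and~(4) will follow almost immediately from part~(1), and the canonical ordering will be extracted from part~(2), so the substantive work is in proving parts~(1) and~(2).

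For part~(2), I argue by contradiction. Suppose $x \in P$ is green with $x < f(x)$, both on a branch $b_i$. Since $f$ is $P$-linear and fixes $a$, the arc $[a,x]$ on $b_i$ is mapped to an arc starting at $a$ and ending at $f(x)$; analyzing the laps of the $P$-linear model shows that this arc, or the one immediately determined by the next iterate, must dominate some point $y \in P$ on a neighboring branch at height sufficient to yield an arrow $f(x) \to y$ of displacement $2/3$ in $G_P$. Composed with a black arrow furnished by a complementary lap, this produces a length-$2$ loop of total displacement~$1$, i.e.\ of modified rotation pair $(1/2,\,2)$; by Theorem~\ref{loops:orbits:connection:1}(1) it lifts to a primitive $2$-cycle of $f$, contradicting regularity. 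Hence $f(x) < x$.

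For part~(1), fix $x \in P$. I first show that $x$ lies on some black loop: since no primitive $2$-cycle is forced, no length-$2$ loop of displacement $1$ exists in $G_P$, and this arithmetic constraint combined with transitivity of $G_P$ rules out closed paths at $x$ built from green and red arrows alone, forcing at least one black loop through $x$. Each black arrow advances the branch index by $+1 \pmod 3$, so any black loop has length divisible by~$3$. To upgrade this to an exact bound of~$3$, note that a black loop of length $\geq 6$ must revisit some branch; combining the repetition with a short-cut supplied by transitivity of $G_P$ produces a strictly shorter loop of modified rotation pair $(1/2,\,2)$, again contradicting regularity. Hence the shortest black loop through $x$ has length exactly~$3$.

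Parts~(3) and~(4) now follow at once: the three vertices of a length-$3$ black loop lie on three distinct branches, proving~(3); and applying Theorem~\ref{loops:orbits:connection:1}(1) to this loop produces a periodic point $y$ with $f^3(y)=y$ whose orbit visits three distinct branches, giving a primitive $3$-cycle and proving~(4). For the canonical ordering, let $p_i$ denote the point of $P \cap b_i$ closest to~$a$. Part~(2) rules out $p_i$ being green (otherwise $f(p_i) \in b_i$ with $f(p_i) \geq p_i$ by minimality, contradicting~(2)); a parallel length-$2$-loop construction, exploiting the fact that $p_i$ is the first point of $P$ encountered along $b_i$ from $a$, rules out the red case, so $p_i$ must be black. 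The cyclic chain $p_0 \to p_1 \to p_2 \to p_0$ of black arrows then defines the canonical ordering. The principal obstacle is the length bound in~(1): passing from \emph{some} black loop through $x$ to a black loop of length \emph{exactly}~$3$ demands the careful combinatorial reduction in which regularity is invoked to prune longer black loops via forbidden $(1/2,\,2)$ loops, and the same style of argument, with minimality of $p_i$ in place of branch-repetition, underpins the ``red is impossible'' step in the canonical-ordering part.
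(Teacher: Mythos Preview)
The paper does not supply its own proof of this theorem: it is quoted from \cite{BMR} in the preliminaries and used thereafter as a black box, so there is no in-paper argument to compare against.

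Assessing your sketch on its own merits, the two load-bearing steps are not actually carried out. In part~(1), the sentence ``this arithmetic constraint combined with transitivity of $G_P$ rules out closed paths at $x$ built from green and red arrows alone, forcing at least one black loop through $x$'' is a non-sequitur: the non-existence of certain loops does not manufacture a black loop through a \emph{prescribed} vertex~$x$; one has to \emph{construct} arrows $x\to y\to z\to x$ by analyzing what $f([a,x])$ must cover, which is what \cite{BMR} does. Your length reduction is also off: if a length-$6$ black loop has $x_0,x_3$ on the same branch with $x_0>x_3$, then monotonicity of the arrow relation gives $x_0\to x_4$, producing a shorter \emph{black} loop (not a $(1/2,2)$ loop), and that shorter loop need not contain the original~$x$. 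In part~(2) the promised red arrow $f(x)\to y$ and the resulting length-$2$ loop are asserted (``analyzing the laps of the $P$-linear model shows\ldots'') but never exhibited; nothing in the hypothesis $f(x)>x$ by itself forces a point of $P$ on the neighboring branch at the required height. Your derivations of (3), (4), and the blackness of the $p_i$ from (1) and (2) are fine once those parts are secured, but as written the argument rests on two unproved claims.
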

	
	From now on, we label the \emph{branches} of $Y$ according to this \emph{canonical ordering}.

\subsection{Triod-twist patterns}

In~\cite{BB5}, it was shown that \emph{triod-twist patterns} undergo a qualitative change in their \emph{color} distribution at the critical \emph{rotation number} $\rho = \tfrac{1}{3}$ (see Figure~\ref{drawing2}).

 \begin{theorem}[\cite{BB5}]\label{bifurcation:one:third}
 	Let $\pi$ be a triod--twist pattern with rotation number $\rho$, and let $P$ be a periodic orbit that exhibits $\pi$. Then the following hold:
 	\begin{enumerate}
 		\item If $\rho < \tfrac{1}{3}$, $P$ contains green and black points, but no red points.
 		\item If $\rho = \tfrac{1}{3}$, $P$ is the primitive cycle of period $3$, and consequently all its points are black.
 		\item If $\rho > \tfrac{1}{3}$, $P$ contains red and black points, but no green points.
 	\end{enumerate}
 \end{theorem}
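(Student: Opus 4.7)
The plan is to combine a counting identity on the fundamental loop with the forcing mechanism to rule out the ``wrong'' colors. Let $g$, $b$, $r$ denote the number of green, black, and red points of $P$, respectively. Since $d(\Gamma_P) = (b+2r)/3$ and the length of $\Gamma_P$ equals $n = g+b+r$, the rotation number equals $\rho = (b+2r)/(3n)$. Hence the three cases $\rho < \tfrac{1}{3}$, $\rho = \tfrac{1}{3}$, and $\rho > \tfrac{1}{3}$ correspond exactly to the arithmetic conditions $g > r$, $g = r$, and $g < r$. In every case the presence of black points is automatic because, by the canonical-ordering statement of Theorem~\ref{black:loop:length:3}, the three points of $P$ closest to $a$ on each branch are black.

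For Part (2) the argument is short: by Theorem~\ref{black:loop:length:3}(4), the regular pattern $\pi$ forces a primitive $3$-cycle, which has rotation number $\tfrac{1}{3}$ (each of its arrows advances by one branch and so is black). Since a triod-twist pattern forces no other pattern of its own rotation number, $\pi$ itself must coincide with this primitive $3$-cycle, and all its points are black.

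Parts (1) and (3) are symmetric under orientation-reversal, so I treat only Part~(1), assuming $\rho < \tfrac{1}{3}$. The inequality $g \geq 1$ then follows from $g > r$, so it suffices to prove $r = 0$. I argue by contradiction: suppose $r \geq 1$, and pick a red point $y \in P$ together with a green point $x \in P$ (which exists because $g > r \geq 1$). By Theorem~\ref{black:loop:length:3}(1), both $x$ and $y$ lie on \emph{black loops} of length $3$ in $G_P$. The goal is to produce an elementary point loop $\Gamma' \neq \Gamma_P$ in $G_P$ satisfying $\rho(\Gamma') = \rho$; then Theorem~\ref{loops:orbits:connection:1}(1) yields a periodic orbit $Q$ of the $P$-linear map with rotation number $\rho$ whose pattern is distinct from $\pi$, contradicting the triod-twist hypothesis.

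To construct $\Gamma'$, I exploit the arithmetic coincidence that a green--red pair of arrows contributes total displacement $0 + \tfrac{2}{3} = \tfrac{2}{3}$, exactly matching two consecutive black arrows. Rerouting $\Gamma_P$ through portions of the black $3$-loops at $x$ and $y$---replacing the green outgoing arrow at $x$ and the red outgoing arrow at $y$ by pairs of black detours---yields a closed walk in $G_P$ of the same length $n$ and the same total displacement, hence the same rotation number $\rho$, but with visibly altered color composition. Decomposing this walk into its elementary sub-loops, as permitted by the convex-hull description of $L(G_P)$ (see \cite{zie95}), at least one elementary constituent realizes rotation number $\rho$ while differing from $\Gamma_P$. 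The principal technical obstacle is ensuring that the rerouting is a genuine walk in $G_P$: one must verify the existence of the connecting arrows that splice the black-loop detours back into the surviving arcs of $\Gamma_P$ at the correct branches. This will be handled using the transitivity of $G_P$ (automatic since $P$ is a cycle) together with the geometric descent property for green arrows provided by Theorem~\ref{black:loop:length:3}(2).
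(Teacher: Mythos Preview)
The paper does not prove this theorem; it is quoted without proof from \cite{BB5}. So there is no argument in the paper to compare against, and I evaluate your proposal on its own merits.

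Your argument for Part~(2) is clean and correct: the regular pattern $\pi$ forces a primitive $3$-cycle (Theorem~\ref{black:loop:length:3}(4)), that cycle has rotation number $\tfrac13$, and the triod--twist condition forces $\pi$ to coincide with it. The counting identity $\rho=(b+2r)/(3n)$ and the resulting trichotomy $g>r$, $g=r$, $g<r$ are also fine, and they do show immediately that $P$ has green points when $\rho<\tfrac13$ and red points when $\rho>\tfrac13$; black points are present by Theorem~\ref{black:loop:length:3}.

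The substantive gap is in your exclusion argument for Parts~(1) and~(3). Two problems:
\begin{itemize}
\item The ``rerouting'' is never actually constructed. You want to delete the green arrow $x\to f(x)$ and the red arrow $y\to f(y)$ from $\Gamma_P$ and splice in black segments so that the result is again a closed walk in $G_P$ of the same length and displacement. But the surviving arcs of $\Gamma_P$ end at $f^{-1}(x)$, $f^{-1}(y)$ and begin at $f(x)$, $f(y)$; you must exhibit specific black arrows joining these endpoints in the correct branches, and nothing in Theorem~\ref{black:loop:length:3} or transitivity of $G_P$ gives them to you for free. You acknowledge this as the ``principal technical obstacle'' but do not resolve it.
\item More seriously, your decomposition step is wrong as stated. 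If a closed walk of rotation number $\rho$ decomposes into elementary loops $\Gamma_1,\dots,\Gamma_k$, then $\rho$ is only a \emph{weighted average} of the $\rho(\Gamma_i)$; it need not equal any one of them. The convex-hull result of \cite{zie95} says exactly this and no more. So even granting the rerouted walk, you have not produced a loop of rotation number $\rho$ distinct from $\Gamma_P$. (One could try to rescue this by arguing that if no $\rho(\Gamma_i)$ equals $\rho$ then $\rho$ lies in the interior of $L(G_P)$ and then invoking Theorem~\ref{result:1} to force infinitely many patterns of rotation number $\rho$; but that is a different argument, and you would still need to handle the boundary case and to check that the forced pattern is genuinely different from $\pi$.)
\end{itemize}
As written, Parts~(1) and~(3) are a heuristic outline rather than a proof.
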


\begin{figure}[H]
	\centering
	\includegraphics[width=0.45\textwidth]{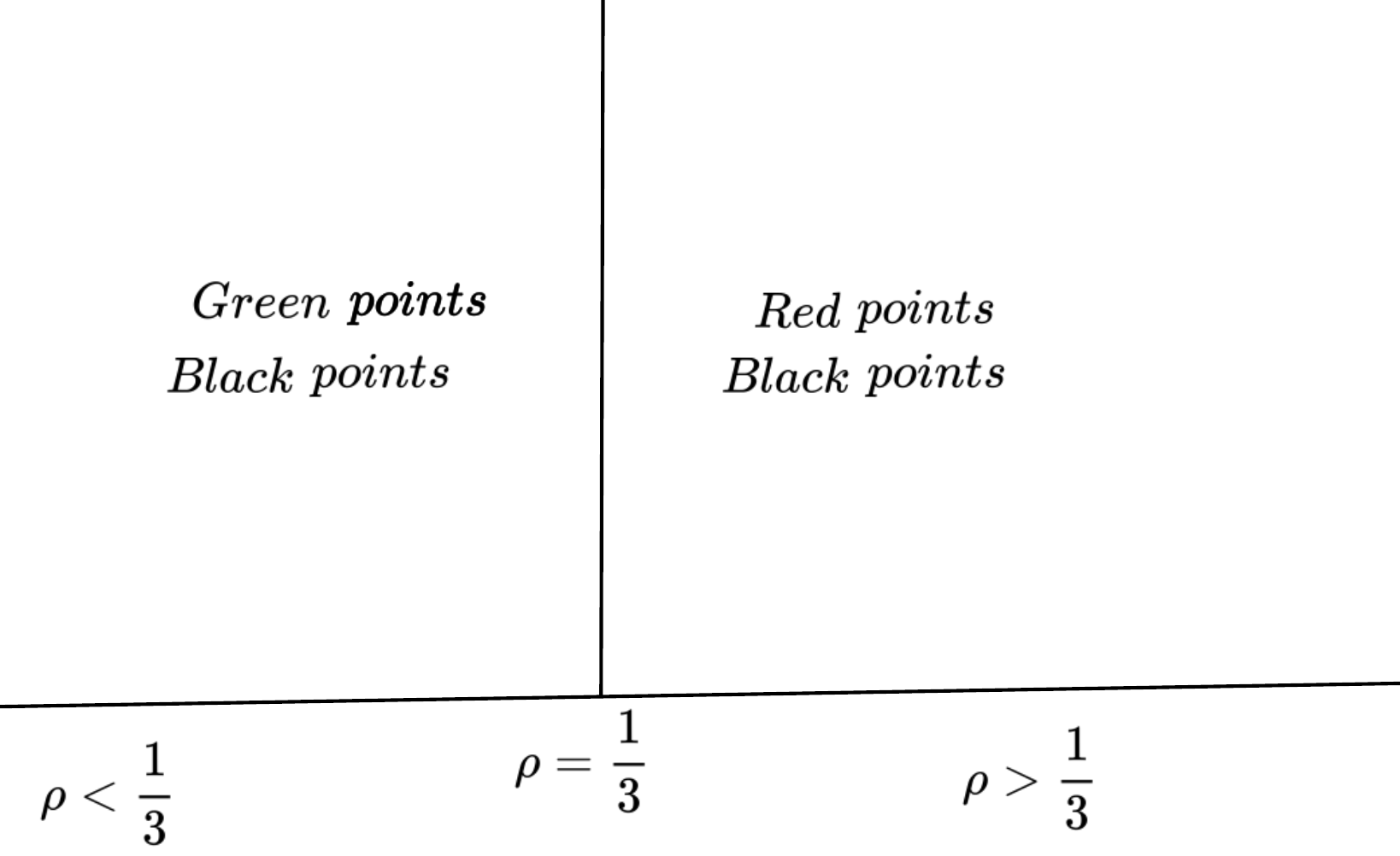}
	\caption{\emph{Bifurcation diagram} showing the variation in point colors with respect to the rotation number $\rho$.}
	\label{drawing2}
\end{figure}

We next state a necessary condition for a \emph{pattern} to be a \emph{triod-twist pattern}.

\begin{definition}[\cite{BB5}]\label{green}
	A \emph{regular cycle} $P$ is called \emph{order-preserving} if for any $x,y \in P$ with $x >  y$, whenever $f(x)$ and $f(y)$ lie on the same \emph{branch} of $Y$, it follows that $f(x)>  f(y)$.  
	A \emph{pattern} $A$ is called \emph{order-preserving} if every cycle \emph{exhibiting} $A$ is \emph{order-preserving}.
\end{definition}

\begin{theorem}[\cite{BB5}]\label{necesary:condition:triod:twist}
	Every triod-twist pattern is order-preserving.
\end{theorem}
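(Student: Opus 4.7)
The plan is to prove the contrapositive: assume a regular pattern $A$ fails to be \emph{order-preserving}, and produce a forced pattern $B \neq A$ with $\rho(B) = \rho(A)$, contradicting the triod-twist hypothesis. Let $P = \{x_0, x_1, \ldots, x_{n-1}\}$ exhibit $A$, with $x_{i+1} = f(x_i)$ and $f$ the $P$-linear map, and set $\rho = \rho(A)$. By assumption, after relabeling, there is an index $k$ with $1 \leq k \leq n-1$ such that $x_0 > x_k$ lie on a common branch of $Y$, while $x_1 = f(x_0) < f(x_k) = x_{k+1}$ lie on a common branch.

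From these inequalities I will extract two nonstandard arrows in $G_P$: taking $z = x_k$ yields both $x_0 \to x_{k+1}$ (since $x_0 \geqslant x_k$ and $f(x_k) = x_{k+1}$) and $x_k \to x_1$ (since $f(x_k) = x_{k+1} \geqslant x_1$). These arrows generate two elementary \emph{point loops}
\[
L_1:\ x_0 \to x_{k+1} \to x_{k+2} \to \cdots \to x_{n-1} \to x_0 \qquad (\text{length } n-k),
\]
\[
L_2:\ x_1 \to x_2 \to \cdots \to x_k \to x_1 \qquad (\text{length } k),
\]
whose vertex sets together partition $P$. The delicate step, and the crux of the whole argument, is the displacement identity $d(L_1) + d(L_2) = d(\Gamma_P) = n\rho$. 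It follows from the fact that $x_0, x_k$ share a branch and $x_1, x_{k+1}$ share a branch, which forces the four values $d(x_0 \to x_1)$, $d(x_k \to x_{k+1})$, $d(x_0 \to x_{k+1})$, $d(x_k \to x_1)$ to coincide; without both same-branch hypotheses one would only get $d(L_1) + d(L_2) = n\rho + \varepsilon$ with $\varepsilon \in \{-1, 0, +1\}$, which would be insufficient.

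The identity rewrites as $\rho = \big((n-k)\rho(L_1) + k\rho(L_2)\big)/n$, a convex combination with positive weights. If $\rho(L_1) = \rho(L_2) = \rho$, I apply Theorem~\ref{loops:orbits:connection:1}(1) to $L_1$ to obtain a periodic orbit $Q \subset Y \setminus \{a\}$ whose period divides $n - k < n$ and whose rotation number equals $\rho(L_1) = \rho$; since $|Q| \neq |P|$, $Q$ exhibits a pattern distinct from $A$ and, by Theorem~\ref{forcing}, forced by $A$, contradicting the triod-twist hypothesis. Otherwise $\rho(L_1)$ and $\rho(L_2)$ lie strictly on opposite sides of $\rho$, so $\rho$ sits in the interior of the rotation interval $L(G_P)$. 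Because every triod-twist pattern is regular by definition, Theorem~\ref{result:1} applies and yields $mrp(A) = [(t_1, m_1), (t_2, m_2)]$; the interior condition places $(\rho, m) \in mrp(A)$ for every $m \in \mathbb{N}$, producing infinitely many forced patterns of rotation number $\rho$, again contradicting the triod-twist hypothesis.
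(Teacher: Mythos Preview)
The paper does not contain its own proof of this theorem; it is quoted from~\cite{BB5} as a preliminary result. So there is nothing to compare against directly.

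Your argument is correct. A couple of points worth making explicit so the write-up is airtight:

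\begin{itemize}
\item In Case~1 you assert that the cycle $Q$ produced by Theorem~\ref{loops:orbits:connection:1}(1) has rotation number $\rho(L_1)$. This is true, but the theorem only says that $f^i(y)$ and the $i$-th vertex of $L_1$ lie on the same branch. You should spell out the one-line consequence: the displacement $d(f^i(y)\to f^{i+1}(y))$ depends only on branches, hence the total displacement of $y$ over $|L_1|$ steps equals $d(L_1)$, and dividing by the actual period of $y$ (a divisor of $|L_1|$) still gives $\rho(L_1)$.
\item In Case~2 you invoke Theorem~\ref{result:1}. That theorem is stated for a regular pattern associated with a map $f\in\mathcal{R}$. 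You are assuming $A$ is regular, but you should note why the $P$-linear map $f$ lies in $\mathcal{R}$: every cycle of $f$ is a pattern forced by $A$ (Theorem~\ref{forcing}), and by transitivity of forcing none of them can force a primitive $2$-cycle if $A$ does not. Also, the identification $L(G_P)=[t_1,t_2]$ is implicit; it follows from combining Theorems~\ref{loops:orbits:connection:1} and~\ref{result:1}, but a sentence making this explicit would close the gap between ``$\rho$ lies in the interior of $L(G_P)$'' and ``$t_1<\rho<t_2$''.
\end{itemize}

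With these clarifications your contrapositive proof goes through. The displacement identity $d(L_1)+d(L_2)=d(\Gamma_P)$, which you correctly flag as the crux, is exactly where the same-branch hypotheses on both pairs $(x_0,x_k)$ and $(x_1,x_{k+1})$ are used; your remark about the $\varepsilon\in\{-1,0,1\}$ defect without them is on point.
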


\subsection{Code Function}\label{section:code}

The \emph{code function} was first introduced by Blokh and Misiurewicz for interval maps (see~\cite{BM2}) and later extended to \emph{triod} dynamics in~\cite{BB5}.  
It provides a combinatorial encoding of the ordering of points along a periodic orbit and relates this structure to the corresponding \emph{rotation number}.

\begin{definition}[\cite{BB5}]\label{code:function}
	Let $f \in \mathcal{R}$ have a periodic orbit $P$ with \emph{rotation number} $\rho \neq \tfrac{1}{3}$.  
	The \emph{code function} $L:P \to \mathbb{R}$ is defined as follows:
	\begin{enumerate}
		\item Fix an initial point $x_0 \in P$ and set $L(x_0) = 0$.
		\item For $k \geqslant 1$, define recursively
		\[
		L(f^k(x_0)) = L(x_0) + k\rho - [t_k],
		\]
		where $t_k = \sum_{j=0}^{k-1} d(f^j(x_0),f^{j+1}(x_0))$ and $[x]$ denotes the integer part of $x$.
	\end{enumerate}
\end{definition}

The value $L(x)$ is called the \emph{code} of the point $x \in P$.  
If $P$ has period $q$, then $t_q = q\rho \in \mathbb{Z}_+$, and thus $L(f^q(x_0)) = L(x_0)$, ensuring that $L$ is well defined on $P$.  
Moreover, for any $x,y \in P$, the difference $L(x)-L(y)$ does not depend on the particular choice of $x_0$.

\begin{definition}[\cite{BB5}]\label{non:decreasing}
	Let $L$ be the \emph{code function} for a cycle $P$ with \emph{rotation number} $\rho$.  
	We say that $L$ is \emph{non-decreasing} if for $x, y \in P$, 
	\begin{enumerate}
		\item If $\rho \leqslant \tfrac{1}{3}$, then $L(x) \leqslant L(y)$ whenever $x > y$.
		\item If $\rho >  \tfrac{1}{3}$, then $L(x) \geqslant L(y)$ whenever $x >  y$.
	\end{enumerate}
	Otherwise, $L$ is called \emph{decreasing}.  
	The function $L$ is \emph{strictly increasing} if it is \emph{non-decreasing} and no two consecutive points of $P$ within the same \emph{branch} of $Y$ have equal \emph{code} values.
\end{definition}

A \emph{pattern} $A$ is said to admit a \emph{non-decreasing} (respectively, \emph{strictly increasing}) \emph{code function} if every periodic orbit \emph{exhibiting} $A$ has such a \emph{code} function.  
The next theorem gives a complete characterization of \emph{triod-twist patterns} using this concept.

\begin{theorem}[\cite{BB5}]\label{tri-od:rot:twist:order:inv}
	A regular pattern $\pi$ is a triod-twist pattern if and only if it possesses a strictly increasing code function.
\end{theorem}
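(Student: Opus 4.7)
The plan is to prove both directions by working with the $P$-linear map $f$ associated to a representative cycle $P$ of $\pi$, and by combining Theorem \ref{loops:orbits:connection:1} with a careful analysis of how the code function $L$ interacts with the branch structure of $Y$.

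For the necessity direction, suppose $\pi$ is a triod-twist pattern, so by Theorem \ref{necesary:condition:triod:twist} it is order-preserving. I would argue the contrapositive: if $L$ fails to be strictly increasing, then either (i) there exist $x,y \in P$ on the same branch with $x > y$ whose codes violate the monotonicity required by Definition \ref{non:decreasing}, or (ii) two consecutive points on the same branch carry identical codes. In either case, the failure of strict monotonicity lets me build a second point loop in $G_P$ having the same length and the same total displacement as the fundamental loop $\Gamma_P$, but visiting a different sequence of vertices---obtained by rerouting $\Gamma_P$ through the offending pair via an alternate arrow whose existence is guaranteed by comparing codes. Feeding this new loop into Theorem \ref{loops:orbits:connection:1}(1) produces a cycle of $f$ with rotation number $\rho$ whose pattern is distinct from $\pi$, contradicting triod-twistness.

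For sufficiency, assume $L$ is strictly increasing on $P$ and suppose for contradiction that $\pi$ forces some pattern $B \neq \pi$ with the same rotation number $\rho$. By Theorem \ref{forcing}, $f$ carries a cycle $Q$ exhibiting $B$, and Theorem \ref{loops:orbits:connection:1}(2) provides a point loop $\Gamma: x_0 \to x_1 \to \dots \to x_{q-1} \to x_0$ in $G_P$ with $x_i \geqslant f^i(y)$ for some $y \in Q$; this loop has rotation number $\rho$. I would exploit strict monotonicity of $L$ to show that such a loop must coincide, vertex by vertex, with some cyclic rotation of $\Gamma_P$: each code increment $k\rho - [t_k]$ pinpoints a unique point of $P$ on a unique branch, because along each branch the codes of $P$ are all distinct and monotone. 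This forces $q = |P|$ and the combinatorial type of $Q$ to agree with that of $P$, yielding $B = \pi$.

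The main obstacle, in both directions, is orchestrating the argument uniformly across the three color regimes described by Theorem \ref{bifurcation:one:third}, since the direction of monotonicity of $L$ reverses at $\rho = \tfrac{1}{3}$ and the primitive $3$-cycle case must be handled separately. A secondary subtlety is the swap construction in the necessity step: one must verify that the alternate arrow used to reroute $\Gamma_P$ genuinely lies in $G_P$, and that the resulting cycle is not merely a cyclic relabeling of $P$; both points require tracking codes carefully and leveraging the length-three black loops guaranteed by Theorem \ref{black:loop:length:3}.
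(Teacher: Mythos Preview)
The paper does not contain a proof of this statement. Theorem~\ref{tri-od:rot:twist:order:inv} is stated in the preliminaries section with the citation~\cite{BB5} and is imported wholesale from that earlier work; the present paper merely invokes it (notably in the proof of Theorem~\ref{thm:triod-twist-rotation}) without reproving it. There is therefore no ``paper's own proof'' to compare your proposal against.

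That said, a brief assessment of your outline on its own merits: the necessity direction is plausible in spirit---a failure of strict monotonicity of $L$ should indeed manifest as a redundant arrow in $G_P$ that allows a shortcut or swap producing a distinct loop of the same rotation number---but the rerouting step is left entirely schematic, and you have not explained why the new loop yields a cycle of a genuinely different \emph{pattern} rather than just a different periodic point of the same pattern. The sufficiency direction has a more serious gap: you assert that strict monotonicity of $L$ forces any point loop of rotation number $\rho$ in $G_P$ to coincide vertex-by-vertex with $\Gamma_P$, but the code function $L$ is defined only on the points of $P$, not on arbitrary loops or on the points of the forced cycle $Q$, so the sentence ``each code increment $k\rho - [t_k]$ pinpoints a unique point of $P$ on a unique branch'' does not by itself constrain the loop $\Gamma$ associated with $Q$. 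You would need an auxiliary argument---for instance, extending $L$ to a function on the whole of $[P]$ and showing it is monotone there, or transferring the code inequalities to the vertices $x_i$ of $\Gamma$ via the relation $x_i \geqslant f^i(y)$---before the uniqueness conclusion follows. If you want the actual argument, consult~\cite{BB5} directly.
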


\section{Main Section}\label{bound:phase:function}
We will use the following notations throughout this section. For any subset $A \subseteq Y$, we denote by $i(A)$ and $e(A)$ the elements of $A$ that are respectively the \emph{nearest} and \emph{farthest} from the \emph{branch point} $a$.  
Furthermore, let $\chi(A)$ represent the \emph{supremum} of the set $\{L(x) - L(y) : x, y \in A\}$.  
For two subsets $A, B \subseteq Y$, we write $A \geqslant B$ if $a \geqslant b$ for every $a \in A$ and $b \in B$.  
Similarly, for a point $a \in Y$, the notation $a \geqslant B$ indicates that $a \geqslant b$ for all $b \in B$.

Let $\pi$ be a \emph{triod–twist pattern} with \emph{modality} $m$ and
\emph{rotation number} $\rho$. Suppose that $P$ is a cycle that
\emph{exhibits}  the \emph{pattern} $\pi$, and let $f$ be the associated
$P$–\emph{linear} map. Recall that the \emph{color} notations were introduced in Section \ref{colors}.

 We introduce an equivalence relation $\sim$ on $P$ as follows: for 
$a,b \in P$, we write $a \sim b$ if $a$ and $b$ have the same \emph{color}, 
and there is no point of $P$ of a different \emph{color} lying between them.  The equivalence classes of this relation are called \emph{states}.

 A \emph{state} consisting solely of \emph{green} points will be called a
 \emph{green state}; likewise, a \emph{state} made up entirely of \emph{black}
 points is a \emph{black state}, and one containing only \emph{red} points
 is a \emph{red state}. We denote the collections of all \emph{green}, \emph{black},
 and \emph{red states} of $P$ by $\mathcal{G}(P)$, $\mathcal{B}(P)$, and
 $\mathcal{R}(P)$, respectively. Moreover, we denote by $G(P)$, $B(P)$, and $R(P)$ the sets of all
 \emph{green}, \emph{black}, and \emph{red} points of $P$, respectively;
 that is, $G(P)$, $B(P)$, and $R(P)$ are precisely the unions of the
 elements of the collections $\mathcal{G}(P)$, $\mathcal{B}(P)$, and
 $\mathcal{R}(P)$.

\begin{lemma}\label{first:lemma}
 
The  cardinality of each of the sets $\mathcal{G}(P)$ and  $\mathcal{R}(P)$ is strictly less than $\frac{m}{2}+2$, $m$ being the modality of $f$. 
\end{lemma}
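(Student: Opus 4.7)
The plan is to bound $|\mathcal{G}(P)|$ and $|\mathcal{R}(P)|$ by counting the laps of the $P$-linear map $f$ in terms of the state structure of $P$. By Theorem~\ref{bifurcation:one:third}, when $\rho = \tfrac{1}{3}$ both $\mathcal{G}(P)$ and $\mathcal{R}(P)$ are empty and the inequality is vacuous. When $\rho < \tfrac{1}{3}$ the set $\mathcal{R}(P)$ is empty and only the bound on $|\mathcal{G}(P)|$ is nontrivial, while the case $\rho > \tfrac{1}{3}$ is handled by interchanging green and red. I therefore focus on $\rho < \tfrac{1}{3}$.

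A first observation is that the innermost points of $P$ on the three branches, which are black by Theorem~\ref{black:loop:length:3}, all lie in a \emph{single} equivalence class under $\sim$: the arc in $Y$ joining any two of them passes through the branch point $a$ and contains no other point of $P$ of a different color. Consequently there is a unique ``inner black state'' spanning all three branches, consisting of the innermost black chunks of $P$ on each branch. Beyond this state, each branch $b_i$ carries a (possibly empty) sequence of local states that alternates in color; since the inner state is black, the first local state on $b_i$ (if any) must be green. Letting $g_i$ denote the number of green states on $b_i$ and $\tilde b_i$ the number of local black states on $b_i$, one sees that $\tilde b_i \in \{g_i - 1, g_i\}$ when $g_i \geqslant 1$ and $\tilde b_i = 0$ otherwise, so summing over the three branches yields $|\mathcal{B}(P)| = 1 + \sum_i \tilde b_i \geqslant |\mathcal{G}(P)| - 2$.

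Next I would establish that the modality of $f$ equals the total number $N = |\mathcal{G}(P)| + |\mathcal{B}(P)|$ of states. Using the order-preserving property of triod-twist patterns (Theorem~\ref{necesary:condition:triod:twist}), $f$ is monotone on the convex hull of each state: a green state on $b_i$ maps monotonically into $b_i$, and a black state on $b_i$ maps monotonically along $b_{i+1}$. On the interval connecting the last point of one state to the first point of the next, $f$ traces the geodesic in $Y$ through $a$, so it remains monotone on that interval itself; however, a comparison of the left- and right-hand behaviors at the inner endpoint produces a local extremum there, creating a genuine turning point of $f$. The outermost state on each $b_i$ merges with the constant piece of $f$ on $(e(P \cap b_i), R_i]$ and contributes no additional lap, while the three innermost segments (each from $a$ to the innermost point of $P$ on its branch) amalgamate across $a$ into a single ``inner lap'' since their images lie on three pairwise distinct branches. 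Tallying the exclusive laps on each branch together with this single inner lap gives $m = N$.

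Combining the two ingredients, $|\mathcal{G}(P)| + |\mathcal{B}(P)| = N = m$ and $|\mathcal{B}(P)| \geqslant |\mathcal{G}(P)| - 2$ yield $2|\mathcal{G}(P)| \leqslant m + 2$, and therefore $|\mathcal{G}(P)| \leqslant m/2 + 1 < m/2 + 2$, as required. The step I expect to be most delicate is the monotonicity bookkeeping at the branch point $a$ and at inter-state transitions: verifying that pre-images remain connected through $a$ so that the three innermost arcs genuinely fuse into a single lap, and simultaneously checking that at every transition between distinct states the corresponding local extremum truly breaks monotonicity.
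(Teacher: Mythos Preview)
Your approach is correct but takes a different route from the paper. The paper's proof is a short preimage count: since $f$ has $m$ laps, $f^{-1}(a)$ has at most $m+1$ points; between any two adjacent states of different colors the $P$-linear map must cross $a$, so each green state is flanked by two preimages of $a$ unless it sits at a branch end (then only one), and $a$ itself contributes one further preimage not attached to any green state. This yields $m \geq 2(g-3)+3$, hence $g \leq m/2+3/2 < m/2+2$, with no lap analysis at the branch point required.

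You instead aim for the exact identity $m = N$ (total number of states, in your interpretation), which together with the alternation bound $|\mathcal{B}(P)| \geq |\mathcal{G}(P)|-2$ gives the slightly sharper $g \leq m/2+1$. The trade-off is that you must carry out the lap bookkeeping you rightly flag as delicate, whereas the paper sidesteps it entirely by counting preimages rather than laps. One caveat: your reading of $\sim$, under which the three innermost black points form a single state, is not the paper's convention---the paper's own proof explicitly speaks of $a$ as separating ``two black states lying on different branches,'' so states there are confined to individual branches. Under that convention one has $|\mathcal{B}(P)| \geq |\mathcal{G}(P)|$ and the lap count becomes $m = N - 2$ rather than $m=N$; the arithmetic closes identically, but you should align your argument with the paper's definition.
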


\begin{proof}
	Since $f$ has \emph{modality} $m$, the number of pre-images of the \emph{branching point} $a$ under $f$ can be at-most $m+1$. 	Whenever we have two \emph{states} $A$ and $B$ of different \emph{colors}, there exists a point of $f^{-1}(a)$ between them. It follows that a \emph{green state} situated at the end of a \emph{branch} is associated with one pre-image of $a$ while a \emph{green state} lying on a \emph{branch} containing two \emph{states} of different \emph{colors} on its two sides is associated with two pre-images of $a$. Now, the fixed point $a$ is itself a \emph{pre-image} of $a$ which separates two \emph{black states} lying on different \emph{branches} as the \emph{branches} have been \emph{canonically ordered} (See Theorem \ref{black:loop:length:3}). Thus, if $g$ is the number of \emph{green} states, we must have $m \geqslant 2 (g - 3) + 3$ which yields $g \leqslant \frac{m}{2} + \frac{3}{2} <  \frac{m}{2}+2$. Thus, the cardinality of  $\mathcal{G}(P)$ is strictly less than $\frac{m}{2}+2$. Similarly, the cardinality of  $\mathcal{R}(P)$ is strictly less than $\frac{m}{2}+2$. 
\end{proof}

Our objective is to establish that the quantity $\chi(P)$ admits an upper bound depending solely on the \emph{modality} $m$ of the cycle $P$. In view of Theorem~\ref{bifurcation:one:third}, \emph{triod--twist cycles} undergo a qualitative \emph{bifurcation} at the critical \emph{rotation number} $\rho=\tfrac{1}{3}$. This observation naturally motivates a separate analysis of the bound on $\chi(P)$ in the three regimes $\rho=\tfrac{1}{3}$, $\rho<\tfrac{1}{3}$ and $\rho>\tfrac{1}{3}$. In the case $\rho=\tfrac{1}{3}$, the cycle $P$ is necessarily the \emph{primitive} $3$--cycle and hence $\chi(P)$ is trivially bounded above by $1$. We therefore turn our attention to the remaining two cases, which will be examined individually.

\subsection{Case I}

We begin with the case $\rho < \tfrac{1}{3}$.  
According to Theorem~\ref{bifurcation:one:third}, in this situation, the cycle $P$ consists entirely of \emph{green} and \emph{black} points. Also, recall from Definition \ref{non:decreasing} and Theorem \ref{tri-od:rot:twist:order:inv} that in this case for any two points $x,y \in P$ in same \emph{branch} of $Y$, with $x >  y$, we have, $L(y) >  L(x)$. 

\begin{definition}
	A finite sequence $\mathcal{S} = \{x_i : i = 0, 1, 2, \dots, n\}$ of points in $P$ is called an \emph{$f$-train} of length $n$ if it satisfies $x_{i+1} \geqslant f(x_i)$ for every $i \in \{0, 1, 2, \dots, n\}$.  
	An $f$-\emph{train} $\mathcal{S}$ is called \emph{black} if all its points are \emph{black}, \emph{green} if all its points are \emph{green}, and \emph{mixed} if both \emph{colors} appear in $\mathcal{S}$.
\end{definition}

Observe that if $x,y \in P$ and $\mathcal{S} = \{x_0, x_1, x_2, \dots x_n\}$ is an $f$-\emph{train}  of length $n$ with $x_0 = x$ and $x_n =y$,  then $L(y) - L(x) = L(x_n) - L(x_0) \leqslant  \displaystyle \sum_{i=0}^{n-1} L(f(x_i)) - L(x_i)$.

\begin{figure}[H]
	\centering
	\includegraphics[width=0.4\textwidth]{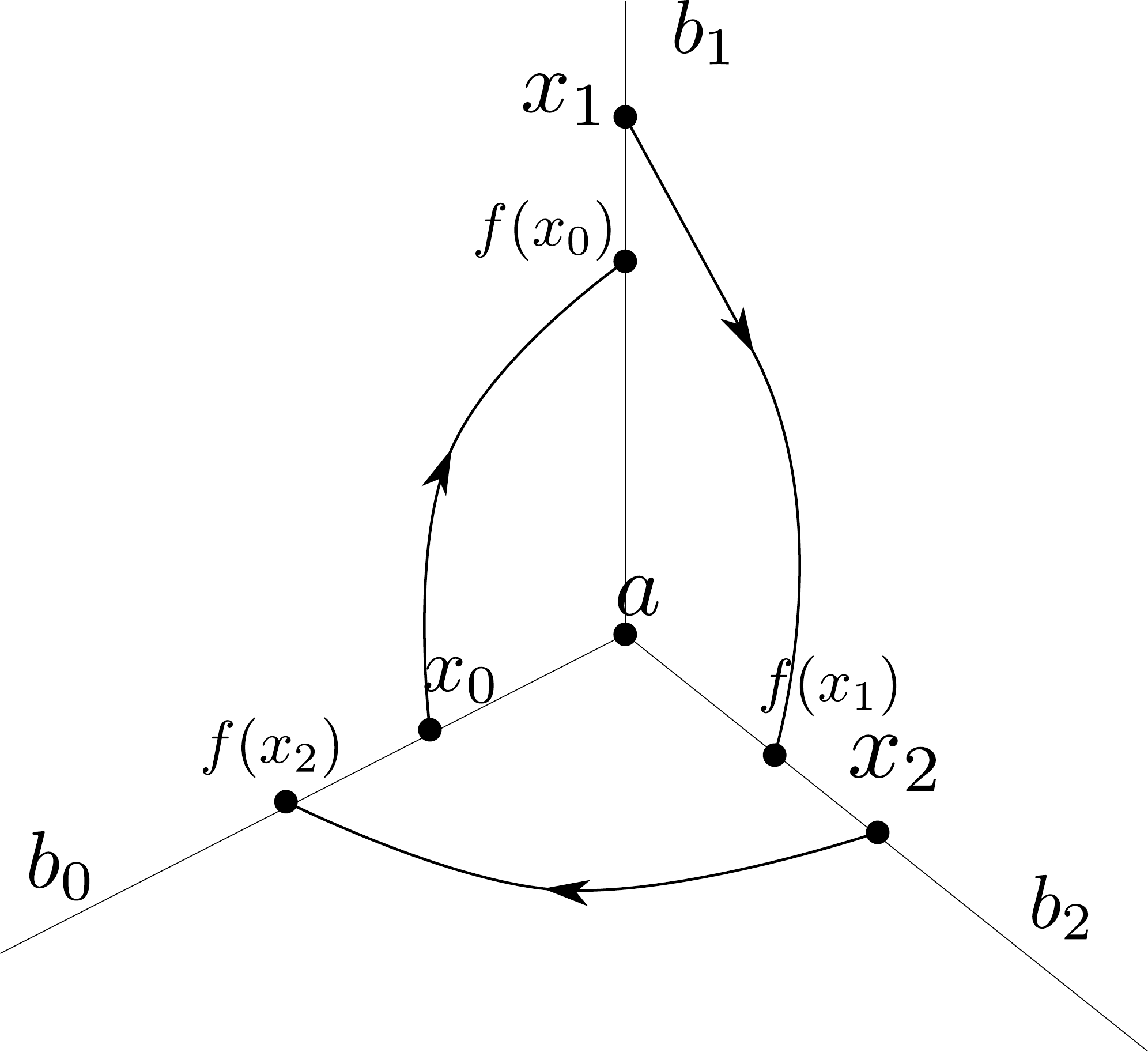}
	\caption{}
	\label{drawing r4}
\end{figure}

\begin{lemma}\label{black:train}
	Let $\mathcal{S} = \{x_i : i = 0, 1, 2, \dots, n\}$ be a black train such that $x_0$ and $x_n$ lie on the same branch.  
	Then $x_n \geqslant x_0$.
\end{lemma}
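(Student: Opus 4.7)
The plan is to bound $L(x_n) - L(x_0)$ from above using the inequality
\[
L(x_n) - L(x_0) \;\leqslant\; \sum_{i=0}^{n-1}\bigl[L(f(x_i)) - L(x_i)\bigr]
\]
recorded immediately before the lemma. Because every black step advances the branch index by one modulo~$3$, the $f$--train visits branches in the cyclic pattern $b_{j_0}, b_{j_0+1}, b_{j_0+2}, b_{j_0},\dots$, where $j_0$ denotes the branch of $x_0$; the hypothesis that $x_0$ and $x_n$ share a branch therefore forces $n = 3\ell$ for some $\ell \geqslant 0$.

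I would fix the base point of the code function at $x_0$, so that $L(x_0) = 0$ and $t_0 = 0$, and write each $x_i = f^{k_i}(x_0)$. A direct computation from Definition \ref{code:function} then gives, for every $i$, the step increment
\[
L(f(x_i)) - L(x_i) \;=\; \rho - \varepsilon_i, \qquad \varepsilon_i := \lfloor t_{k_i+1}\rfloor - \lfloor t_{k_i}\rfloor.
\]
Since $x_i$ is black we have $t_{k_i+1} = t_{k_i} + \frac{1}{3}$, so $\varepsilon_i \in \{0,1\}$, with $\varepsilon_i = 1$ exactly when the fractional part $\{t_{k_i}\}$ equals $\frac{2}{3}$.

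The crux of the argument---and the place where the hypothesis of Case~I is essential---is the identification of $\varepsilon_i$ with the branch of $x_i$. In Case~I the displacement values lie in $\{0, \frac{1}{3}\}$, so every $t_k$ is a multiple of $\frac{1}{3}$ and $\{t_{k_i}\} \in \{0,\frac{1}{3},\frac{2}{3}\}$. Because cumulative displacement records total rotation around the triod, the branch of $f^{k_i}(x_0)$ equals $b_{j_0 + 3\{t_{k_i}\}\bmod 3}$; matching this against the cyclic branch pattern of the train yields $3\{t_{k_i}\} \equiv i \pmod 3$, so $\varepsilon_i = 1$ iff $i \equiv 2 \pmod 3$. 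Counting such indices in the range $0 \leqslant i \leqslant 3\ell - 1$ gives the clean identity $\sum_i \varepsilon_i = \ell$.

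Putting the pieces together,
\[
L(x_n) - L(x_0) \;\leqslant\; n\rho - \sum_i \varepsilon_i \;=\; 3\ell\rho - \ell \;=\; \ell(3\rho - 1) \;\leqslant\; 0,
\]
because $\rho < \frac{1}{3}$. Theorem \ref{tri-od:rot:twist:order:inv} ensures that $L$ is strictly increasing on $P$, which in the present regime means $L$ decreases strictly with the distance from $a$ along any single branch. The inequality $L(x_n) \leqslant L(x_0)$ together with the fact that $x_0$ and $x_n$ lie on the same branch then forces $x_n \geqslant x_0$, as required. The main obstacle is the branch-counting identity $\sum_i \varepsilon_i = \ell$, which depends on the clean bijection between the three-element set $\{0,\frac{1}{3},\frac{2}{3}\}$ of possible fractional parts of $t_k$ and the three branches of $Y$; everything else is routine once this identification is in place.
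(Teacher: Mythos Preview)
Your proof is correct but proceeds along a different axis than the paper's. The paper argues geometrically, in three-step blocks: assuming $x_0 > f(x_2)$, one checks (using only that $P$ is order-preserving, Theorem~\ref{necesary:condition:triod:twist}) that the points of $P$ lying in $[a,x_0]\cup[a,x_1]\cup[a,x_2]$ form an $f$-invariant proper subset, a contradiction; hence $x_3 \geqslant f(x_2) \geqslant x_0$, and one iterates. Your argument instead stays inside the code-function formalism: you compute each step increment $L(f(x_i))-L(x_i)=\rho-\varepsilon_i$, identify $\varepsilon_i$ with the branch of $x_i$ via the relation $3\{t_{k_i}\}\equiv i\pmod 3$ (valid in Case~I because all displacements lie in $\{0,\tfrac13\}$), sum to get $L(x_n)-L(x_0)\leqslant \ell(3\rho-1)\leqslant 0$, and read off $x_n\geqslant x_0$ from strict monotonicity of $L$ along a branch.

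The trade-off: the paper's proof uses only the order-preserving property and is independent of the code machinery, so it would apply to any order-preserving regular cycle; your proof uses the stronger triod-twist hypothesis via Theorem~\ref{tri-od:rot:twist:order:inv}, but it is more in tune with the quantitative code-function estimates that dominate the rest of Section~\ref{bound:phase:function}, and it yields the sharper information $L(x_n)-L(x_0)\leqslant \ell(3\rho-1)$ directly.
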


\begin{proof}
	Since $x_0$ and $x_n$ lie on the same \emph{branch}, $n$ is some multiple of $3$.  
	Observe that the points $x_0$, $f(x_2)$, and $x_3$ belong to the same branch.  
	We claim that $f(x_2) \geqslant x_0$.  
	Assume, to the contrary, that $x_0 >  f(x_2)$.  
	By Theorem~\ref{necesary:condition:triod:twist}, the collection of points lying in the intervals $[a, x_0]$, $[a, x_1]$, and $[a, x_2]$ would then be invariant under $f$, which is a contradiction. 
	Thus, $x_3 \geqslant f(x_2) \geqslant x_0$ (see Figure~\ref{drawing r4}).  
	Applying the same reasoning iteratively gives $ x_6 \geqslant f(x_5) \geqslant x_3$, $x_9 \geqslant f(x_8) \geqslant x_6$, and so on.  
	Hence, the result follows.
\end{proof}

\begin{theorem}\label{green: state:phase:bound}
	For every $A \in \mathcal{G}(P)$, one has $\chi(A) \leqslant 1 - 3\rho$.
\end{theorem}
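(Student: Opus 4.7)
The plan is to bound $\chi(A)=L(g_k)-L(g_1)$, where $g_1=e(A)$ and $g_k=i(A)$, by analysing the forward orbit of $g_1$ under~$f$. I would normalise the code by fixing $x_0=g_1$, so that $L(g_1)=0$ and $L(g_k)=\chi(A)$. The key initial observation is that every point of $A$ is green, and a green arrow has displacement $d=0$; hence $t$ does not change and $[t]$ does not jump at that step, so $L(f(g))=L(g)+\rho$ for each green~$g$. Inductively, $L(f^j(g_1))=j\rho$ as long as $f^j(g_1)\in A$. Let $N$ be the least positive integer with $f^N(g_1)\notin A$; then $f^{N-1}(g_1)\in A$ is green, so $f^N(g_1)$ lies on the same branch $b_i$ as $A$, strictly below it, and $L(f^N(g_1))=N\rho>\chi(A)$, giving $\chi(A)<N\rho$.

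The remaining and main task is to show $(N+3)\rho\leq 1$, so that $\chi(A)<N\rho\leq 1-3\rho$. For this I would argue that the arrows at $f^N(g_1)$, $f^{N+1}(g_1)$, $f^{N+2}(g_1)$ are all black. Since $L(f^N(g_1))=N\rho\leq\chi(A)+\rho$ lies only just above $\chi(A)$, strict monotonicity of the code function (Theorem~\ref{tri-od:rot:twist:order:inv}), the order-preserving property (Theorem~\ref{necesary:condition:triod:twist}), and the canonical ordering (Theorem~\ref{black:loop:length:3}) together force $f^N(g_1)$ to lie in the black state $B_0$ immediately below $A$ on $b_i$, so $f^N(g_1)$ is black; an analogous analysis on branches $b_{i+1}$ and $b_{i+2}$ forces the next two arrows to be black as well. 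Therefore $t_{N+3}-t_N=1$, so $[t_{N+3}]-[t_N]=1$, giving $L(f^{N+3}(g_1))=(N+3)\rho-1$ and $f^{N+3}(g_1)$ back on $b_i$. Applying Lemma~\ref{black:train} to the black $f$-train $f^N(g_1),f^{N+1}(g_1),f^{N+2}(g_1),f^{N+3}(g_1)$, combined with an additional order-preservation comparison with $g_1$, yields $f^{N+3}(g_1)\geq g_1$ on $b_i$, and thus $L(f^{N+3}(g_1))\leq 0$. Hence $(N+3)\rho\leq 1$, which combined with $\chi(A)<N\rho$ gives $\chi(A)\leq 1-3\rho$.

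The principal obstacle lies in the two claims underlying the previous paragraph: that the three successive arrows after the exit from $A$ are black, and that $f^{N+3}(g_1)\geq g_1$. The first requires precluding the possibility that $f^N(g_1)$ drops into a hypothetical deeper green state on $b_i$ rather than into $B_0$, for which the strict monotonicity of the code function (Theorem~\ref{tri-od:rot:twist:order:inv}) is crucial. The second requires strengthening Lemma~\ref{black:train}'s conclusion $f^{N+3}(g_1)\geq f^N(g_1)$ to the stronger $f^{N+3}(g_1)\geq g_1$ by exploiting the order-preserving structure of triod--twist patterns together with the positional geometry of $A$ on $b_i$.
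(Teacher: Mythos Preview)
Your first inequality $\chi(A)<N\rho$ is correct, but the approach after that diverges from the paper's and the two obstacles you flag are genuine gaps, not routine details.

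The paper does not follow the forward orbit of $e(A)$. Instead it takes the black point $t(A)$ adjacent to $i(A)$ on the side of $a$ and, using Theorem~\ref{black:loop:length:3} together with the non-invariance of $\{p\in P : t(A)\geqslant p\}$ and $\{p\in P : p\geqslant e(A)\}$, produces black points $x,y,z$ with $t(A)\geqslant x$, $f(x)\geqslant y$, $f(y)\geqslant z$, $f(z)\geqslant e(A)$. This is an $f$-\emph{train}, not an orbit segment; one then bounds $\chi(A)\leqslant L(x)-L(f(z))$ and a three-line computation with $L(f(u))=L(u)+\rho-\tfrac{1}{3}$ for black $u$ gives $1-3\rho$ directly.

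For your first obstacle: strict monotonicity of $L$ (Theorem~\ref{tri-od:rot:twist:order:inv}) constrains relative order on a single branch, but it does not tell you which state an image lands in, so it cannot force $f^N(g_1)$ into the black state $B_0$ rather than a deeper green one. And even granting that $f^N(g_1)$ is black, nothing among the tools you cite prevents $f^{N+1}(g_1)\in b_{i+1}$ from being green. For your second obstacle: Lemma~\ref{black:train} gives only $f^{N+3}(g_1)\geqslant f^N(g_1)$; order-preservation (Theorem~\ref{necesary:condition:triod:twist}) compares $f$-images of comparable points, and neither $g_1$ nor $f^{N+3}(g_1)$ arises that way from something you are already tracking, so the step to $f^{N+3}(g_1)\geqslant g_1$ has no support. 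The content of your target $(N+3)\rho\leqslant 1$ is precisely that a three-step black passage starting below $A$ returns at or above $e(A)$; this is exactly what the paper's $f$-train $x,y,z$ delivers, but with the crucial freedom to \emph{choose} $x,y,z$ rather than being bound to the actual orbit of $g_1$. Without that freedom your argument does not close.
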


\begin{proof}
	Let $t(A)$ be the \emph{black point} of $P$ adjacent to $i(A)$ towards the \emph{branching point} $a$. 	From Theorems~\ref{bifurcation:one:third} and~\ref{black:loop:length:3}, together with the fact that the sets  
	$\{p \in P : t(A) \geqslant p\}$ and $\{p \in P : p \geqslant e(A)\}$  
	are not invariant under $f$, it follows that there exist \emph{black} points $x, y, z \in P$ such that  
	$t(A) \geqslant x$, $f(x) \geqslant y$, $f(y) \geqslant z$, and $f(z) \geqslant e(A)$ (see Figure~\ref{drawing r5}).  By Definitions~\ref{code:function} and \ref{non:decreasing}, we obtain  
	$\chi(A) \leqslant L(x) - L(f(z)) = L(f(x)) - \rho + \tfrac{1}{3} - L(z) - \rho + \tfrac{1}{3} \leqslant L(y) - L(z) - 2\rho + \tfrac{2}{3} = L(f(y)) - 3\rho + 1 - L(z) \leqslant 1 - 3\rho$.  
	Hence, the claim follows.
\end{proof}

\begin{figure}[H]
	\centering
	\includegraphics[width=0.5\textwidth]{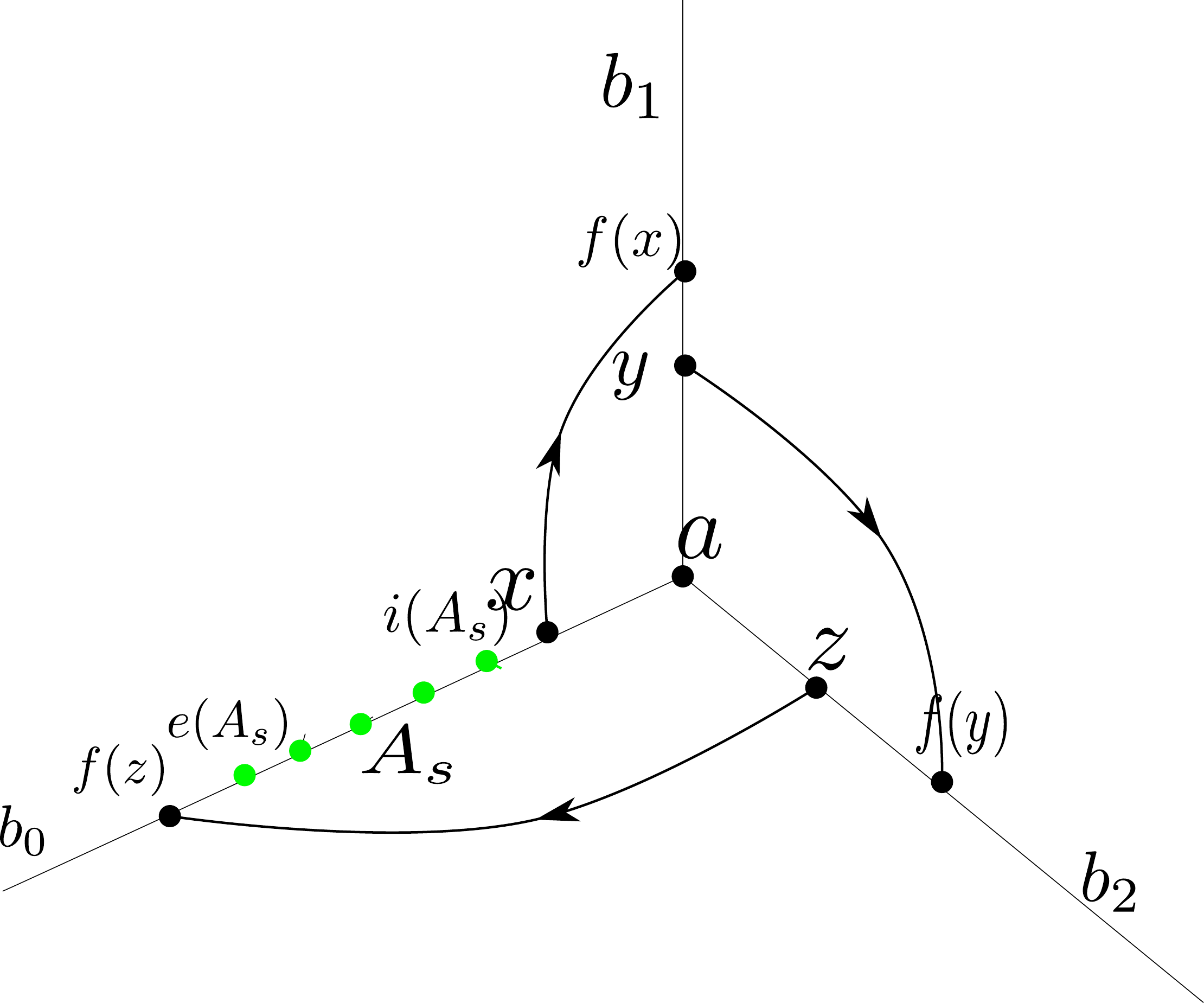}
	\caption{}
	\label{drawing r5}
\end{figure}

Two \emph{green} states $A$ and $B$ with $A >  B$ are called \emph{adjacent} if there exists no \emph{green state} $C$ satisfying $A >  C >  B$.

\begin{definition}\label{green:country}
	Two \emph{adjacent green states} $A, B \in \mathcal{G}(P), A >  B$ are said to belong to the same \emph{country} if there exist points $a \in A$ and $b \in B$ such that $b \geqslant  f(a)$, that is, there exists a  \emph{green train} $\mathcal{S} = \{a, b\}$ of length $1$ that connects $a$ and $b$.

\end{definition}

Let $\mathcal{C}(P)$ be the collection of all \emph{green countries} of $P$.

\begin{theorem}\label{green:country:phase:bound:1}
	Let $C \in \mathcal{C}(P)$ be a green-country composed of $m$ green states. Then, the following inequality holds:
	$\chi(C) \leqslant m(1 - 2\rho) - \rho$. 
\end{theorem}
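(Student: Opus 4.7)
The plan is to list the states of the country $C$ as $A_1, A_2, \dots, A_m$ along a single branch of $Y$ with $A_m > A_{m-1} > \dots > A_1$, so that $A_1$ is the green state nearest the branching point $a$ and $A_m$ is the farthest. By Definition~\ref{non:decreasing}, since $\rho < \tfrac{1}{3}$ the code function $L$ is non-decreasing as we move toward $a$ along the branch, so the supremum $\chi(C)$ is attained and equals $L(i(A_1)) - L(e(A_m))$.

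I would then write this difference as a telescoping sum, separating the \emph{within-state} contributions from the \emph{between-state} gaps:
\[
\chi(C) \;=\; \sum_{k=1}^{m}\bigl[L(i(A_k)) - L(e(A_k))\bigr] \;+\; \sum_{k=1}^{m-1}\bigl[L(e(A_k)) - L(i(A_{k+1}))\bigr].
\]
The first sum is handled immediately by Theorem~\ref{green: state:phase:bound}: each term is $\chi(A_k) \leqslant 1 - 3\rho$, so the within-state contribution is at most $m(1-3\rho)$.

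The main step is to bound each between-state gap by $\rho$. For the adjacent pair $A_{k+1} > A_k$ in the same country, Definition~\ref{green:country} supplies points $a' \in A_{k+1}$ and $b' \in A_k$ with $b' \geqslant f(a')$. Since $a'$ is green, the arrow $a' \to f(a')$ has displacement $0$, so in Definition~\ref{code:function} the partial sum $t_k$ does not cross an integer at this step, and we obtain the \emph{exact} equality $L(f(a')) = L(a') + \rho$. The non-decreasing property of $L$ together with $b' \geqslant f(a')$ on the same branch then gives $L(b') \leqslant L(f(a')) = L(a') + \rho$. Combining this with $L(e(A_k)) \leqslant L(b')$ and $L(a') \leqslant L(i(A_{k+1}))$ yields $L(e(A_k)) - L(i(A_{k+1})) \leqslant \rho$, so the between-state contribution totals at most $(m-1)\rho$.

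Adding both estimates gives $\chi(C) \leqslant m(1-3\rho) + (m-1)\rho = m(1-2\rho) - \rho$, as required. The main technical point — and the place where care is needed — is the verification that the code increment along the green arrow $a' \to f(a')$ is exactly $\rho$; this relies on the \emph{green} (displacement $0$) nature of the connection guaranteed by the country definition, since otherwise an integer-part jump in the formula for $L$ would inflate the gap bound beyond $\rho$ and the final telescoping estimate would fail.
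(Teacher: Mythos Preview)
Your proof is correct and follows essentially the same approach as the paper's: both bound the within-state variation by Theorem~\ref{green: state:phase:bound} (giving $m(1-3\rho)$ total) and the inter-state jumps by $\rho$ each via the green train from Definition~\ref{green:country} (giving $(m-1)\rho$ total), then add. Your version is somewhat more explicit---writing out the telescoping sum and carefully verifying that the code increment along a green arrow is exactly $\rho$---but the argument is the same.
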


\begin{proof}
	Let $A$ and $B$ be two \emph{consecutive green states} in $C$ such that $(A > B)$.  
	According to Definition~\ref{green:country}, there exist points $a \in A$ and $b \in B$ connected by a \emph{green train} $\mathcal{S} = \{a, b\}$ of length $1$.  
	Hence, we have $L(b) - L(a) \leqslant \rho$.
	
	Assume that $C$ contains $m$ \emph{green states}, and let $D$ and $E$ denote, respectively, the \emph{green states} in $C$ that are nearest to and farthest from the \emph{branching point} $a$. The maximal variation of the \emph{code function} between a point $d\in D$ and a point $e\in E$ is attained by moving through each \emph{green state} once and making  $m-1$ times \emph{inter-state} jumps. Thus, by   Definition~\ref{non:decreasing} and  Theorem~\ref{green: state:phase:bound},  for any $d \in D$ and $e \in E$, it follows that $L(d) - L(e) \leqslant (m - 1)\rho + m(1 - 3\rho) = m(1 - 2\rho) - \rho$. 
	This establishes the desired bound.
\end{proof}

\begin{definition}\label{closest:country}
	Let $\Phi_{i} : \mathcal{C}(P) \to \mathcal{C}(P)$ for $i  \in \{1,2\}$ denote the  two mappings defined as follows:  
	\begin{enumerate}
		\item Take $C \in \mathcal{C}(P)$. Let $C \in b_{j}$ for some $j \in \{0,1,2\}$. Choose the \emph{green state} $A$ of $C$ closest to the \emph{branching point} $a$.  Clearly,  $f(i(A)) = c(A) \in b_j$ is a \emph{black point}, and its image $f(c(A)) \in b_{j+1}$.  
		If $f(c(A))$ belongs to a \emph{green country} $D \in b_{j+1}$, then we assign $\Phi_{1}(C) = D$.  	If $f(c(A))$ is \emph{black}, but there exists a \emph{green country} $E \in b_{j+1}$ such that $E >  f(c(A))$, we set $\Phi_{1}(C) = E$.  	On the other hand, if $f(c(A))  \in b_{j+1}$ is \emph{black} and there is no \emph{green country} $E \in b_{j+1}$ satisfying $E > f(c(A))$, we say that $\Phi_{1}(C)$ \emph{does not exist} (see Figure~\ref{drawing r6}).
		
		\item Suppose $f(c(A)) \in b_{j+1}$ is \emph{black} and $f^{2}(c(A)) \in b_{j+2}$ lies in a \emph{green country} $F \in b_{j+2}$, we set $\Phi_{2}(C) = F$. If $f^{2}(c(A))  \in b_{j+2}$ is \emph{black}, but there exists a \emph{green country} $G \in b_{j+2}$ with $G >  f^{2}(c(A))$, we define $\Phi_{2}(C) = G$.  
		Now, if $f^{2}(c(A))  \in b_{j+2}$ is \emph{black} and no \emph{green country} $G \in b_{j+2}$ satisfies $G > f^{2}(c(A))$, we say that $\Phi_{2}(C)$ \emph{does not exist} (see Figure~\ref{drawing r7}).
	\end{enumerate}
\end{definition}

\begin{figure}[H]
	\centering
	\includegraphics[width=0.5\textwidth]{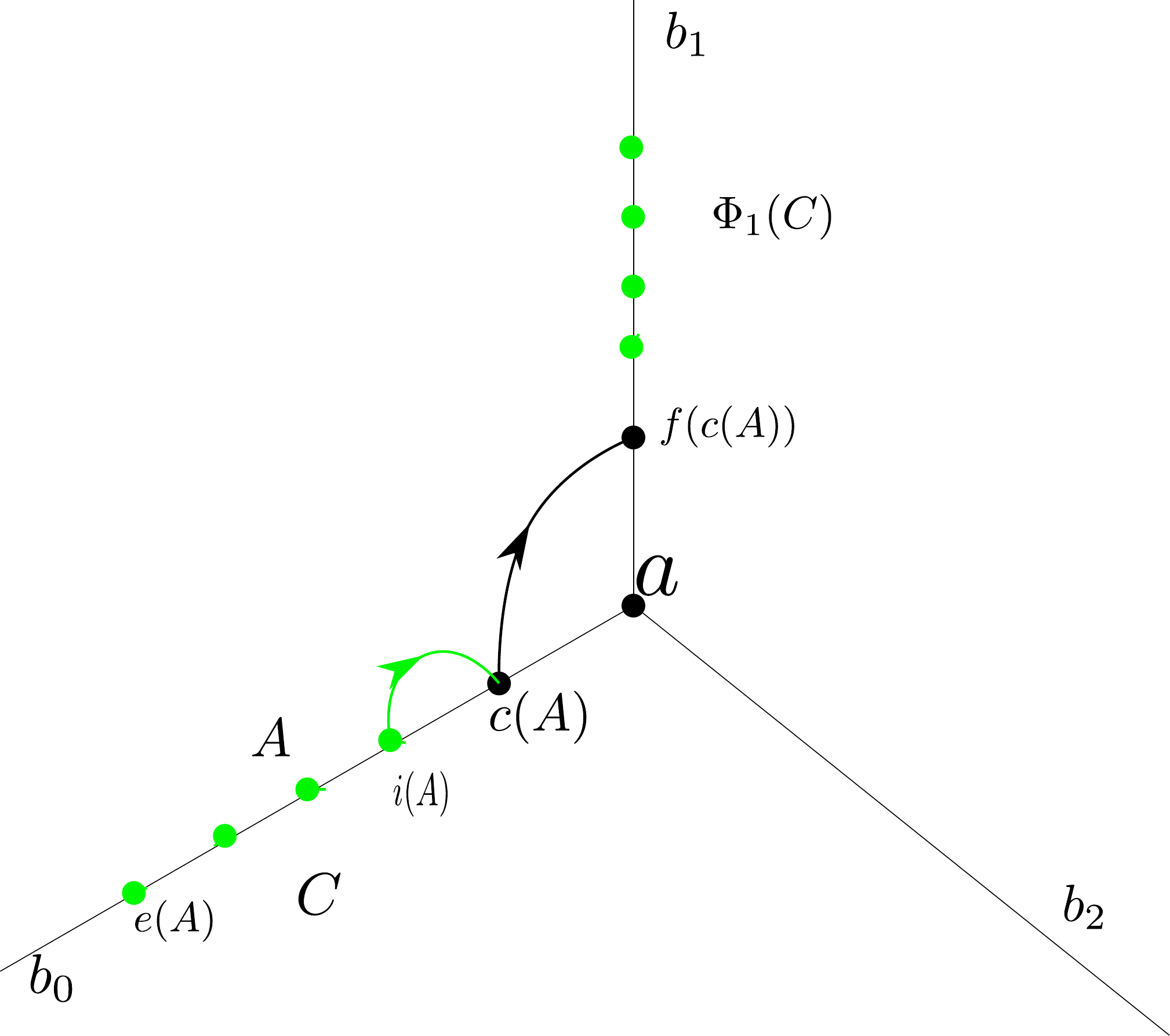}
	\caption{}
	\label{drawing r6}
\end{figure}

\begin{theorem}\label{no:phi:1:2}
	Assume that for some green country $C$ located within a branch $b_0$ of $Y$, both $\Phi_1(C)$ and $\Phi_2(C)$ do not exist. Then, neither branch $b_1$ nor  the branch $b_2$ of $Y$ contains any green point, and all green points in $b_0$ together constitute a single country.
\end{theorem}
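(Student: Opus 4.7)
The plan is to argue by contrapositive: suppose $b_1$ or $b_2$ contains a green point, or $b_0$ contains a green country distinct from $C$, and show that $\Phi_1(C)$ or $\Phi_2(C)$ must exist. The principal tools are the order-preserving property of triod-twist patterns (Theorem~\ref{necesary:condition:triod:twist}) and the strict monotonicity of the code function $L$ on each branch (Theorem~\ref{tri-od:rot:twist:order:inv}) together with the rotational identity $L(f(x)) \equiv L(x) + \rho \pmod 1$ built into Definition~\ref{code:function}.

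As a preliminary I would first establish that $c(A) = f(i(A))$ lies strictly above $e(A')$ for every green state $A' \subset b_0$ with $A' < A$: otherwise Definition~\ref{green:country}, applied with $a = i(A) \in A$ and $b = i(A') \in A'$, would force $A$ and $A'$ into the same country, contradicting $A$ being the bottom state of $C$. This pins $c(A)$ just below $A$ and above any other green structure of $b_0$.

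Next, suppose $b_1$ contains a green point, and let $K$ be the topmost green country of $b_1$ with topmost state $D$ and topmost point $e(D)$. If $e(D) \geqslant f(c(A))$, then Definition~\ref{closest:country} gives $\Phi_1(C) = K$ directly, a contradiction. Otherwise $e(D) < f(c(A))$, and since $e(D)$ is the topmost green of $b_1$, its preimage $h \in P$ cannot be a higher green of $b_1$; hence $h \in b_0$ is black, and order-preservation applied to $h, c(A) \in b_0$ with images in $b_1$ forces $h < c(A)$. Tracing the forward orbit of $h$ through the green chain in $b_1$ emanating from $e(D)$ and into $b_2$, and comparing via $L$ with the orbit $c(A) \to f(c(A)) \to f^2(c(A))$, the rotational identity and strict monotonicity of $L$ on $b_2$ should produce a point of $P \cap b_2$ whose location forces a green country above $f^2(c(A))$, yielding $\Phi_2(C)$. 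The case of a green point in $b_2$ is handled symmetrically one branch over by the same arguments; the case of a second green country $C' \neq C$ in $b_0$ is reduced to the previous two by chasing its image under $f$ into $b_1$, or ruled out directly by the rigid code-spacing of triod-twist patterns.

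The principal obstacle is producing the required green country above $f^2(c(A))$ in the $b_1$-case. One must carefully balance the code offset $\delta = L(h) - L(c(A))$ against the cumulative rotational advance $k\rho$ accumulated along the green chain in $b_1$, treat the resulting difference modulo $1$, and translate it back into positional information on $b_2$ via the strict monotonicity of $L$. Identifying from this modular data not merely an isolated green point but a full green country lying above $f^2(c(A))$ is the delicate technical step, and where the bulk of the argument lives.
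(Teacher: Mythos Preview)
Your approach is substantially different from the paper's, and the step you yourself flag as ``the delicate technical step'' is where the proposal falls short. The paper does not touch the code function $L$ here at all. It argues directly: since $\Phi_1(C)$ and $\Phi_2(C)$ fail to exist, the points $c(A)$, $f(c(A))$, $f^2(c(A))$ are all black, and the black-train lemma (Lemma~\ref{black:train}) together with order-preservation (Theorem~\ref{necesary:condition:triod:twist}) forces these three points to be the elements of $P$ closest to $a$ on their respective branches. Once that is known, non-existence of $\Phi_i$ says there are no green points \emph{above} $f(c(A))$ in $b_1$ and none above $f^2(c(A))$ in $b_2$; since nothing lies below them either, $b_1$ and $b_2$ are green-free, and the $b_0$ conclusion follows analogously.

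Your code-function route has two concrete problems. First, your Case~1 is not airtight: $e(D) \geqslant f(c(A))$ does not by itself produce $\Phi_1(C)$, because Definition~\ref{closest:country} demands either that $f(c(A))$ be green or that an entire green \emph{country} lie strictly above $f(c(A))$; a country $K$ whose states straddle the black point $f(c(A))$ satisfies neither clause. Second, and more seriously, the plan to trace $h$ forward and compare codes modulo $1$ can at best yield a positional inequality for some orbit point in $b_2$; translating that into the existence of a green \emph{country} above $f^2(c(A))$ is precisely the hard part, and nothing in the rotational identity $L(f(x)) = L(x) + \rho - d$ or in the monotonicity of $L$ speaks to the \emph{color} of the point produced. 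The black-train lemma is the missing structural tool that handles this cleanly, and you should invoke it rather than chase code values.
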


\begin{proof}
	Let $A$ denote the \emph{green state} of $C$ that lies closest to the \emph{branching point} $a$.  Then, the point $c(A) = f(i(A)) \in b_0$ is \emph{black}.  
	Since $\Phi_1(C)$ and $\Phi_2(C)$ do not exist, it follows that the points $f(c(A)) \in b_1$ and $f^2(c(A)) \in b_2$ are both \emph{black}. Moreover, there are no \emph{green points} $t \in b_1$ with $t >  f(c(A))$, and no \emph{green  points} $s \in b_2$ with $s >  f^2(c(A))$.  
	
	By Lemma~\ref{black:train}, Theorem~\ref{necesary:condition:triod:twist},  combined with the non-existence of $\Phi_1(C)$ and $\Phi_2(C)$ and the fact that $A$ is the \emph{green state} of $C$ nearest to the \emph{branching point} $a$, we conclude that the points $c(A)$, $f(c(A))$, and $f^2(c(A))$ are precisely those elements of $P$ that are closest to $a$ on their respective \emph{branches}. Hence, the claim follows.
\end{proof}

\begin{theorem}\label{phi:cube:closer}
	Let there be a green country $C$ contained within a branch $b_0$ of $Y$ such that $\Phi_1(C)$, $\Phi_1^2(C)$, and $\Phi_1^3(C)$ all exist. Then $C >  \Phi_1^3(C)$, unless no green country $C'$ exists with $C > C'$.
\end{theorem}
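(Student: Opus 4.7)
Set $C_i = \Phi_{1}^{i}(C)$ for $i = 0,1,2,3$; so $C_0 = C$ and $C_3$ both lie on $b_0$, while $C_1 \subset b_1$ and $C_2 \subset b_2$. Let $A_i$ denote the innermost green state of $C_i$ and $c_i = c(A_i) = f(i(A_i))$; by Definition~\ref{closest:country} each $c_i$ is black on $b_{i \bmod 3}$. The strategy is to derive the inequality $A_3 \leqslant A_0$ from the position relations furnished by Definition~\ref{closest:country}, and then use the hypothesis on $C$ to upgrade this to the strict inequality $A_3 < A_0$ and hence to $C > C_3$.

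\textbf{Local position relations.} For each step $i \in \{0,1,2\}$, the two cases of Definition~\ref{closest:country} give explicit position relations on $b_{i+1}$. In the green case, $f(c_i) \in C_{i+1}$ and the fact that $A_{i+1}$ is the innermost green state of $C_{i+1}$ yields $i(A_{i+1}) \leqslant f(c_i)$; order-preservation (Theorem~\ref{necesary:condition:triod:twist}) applied to the green points $i(A_{i+1})$ and $f(c_i)$ on $b_{i+1}$ (whose $f$-images both remain on $b_{i+1}$) then gives $c_{i+1} \leqslant f^{2}(c_i)$ on $b_{i+1}$. In the black case, the choice $C_{i+1} > f(c_i)$ forces $i(A_{i+1}) > f(c_i)$ on $b_{i+1}$, and one extends the black sequence $c_i \to f(c_i)$ by a further $f$-step along the forward orbit of $c_0$, which remains black by the hypothesis of this case.

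\textbf{Global chain and conclusion.} Stitching the three local relations together produces a comparison between $c_3$ and $c_0$ on $b_0$: on maximally-black segments of the forward orbit of $c_0$, Lemma~\ref{black:train} supplies $f^{3}(c_0) \geqslant c_0$ and feeds into the black-case position relations; on the green segments, the green-case inequality $c_{i+1} \leqslant f^{2}(c_i)$ propagates the comparison along $b_{i+1}$. Combining these yields $c_3 \leqslant c_0$ on $b_0$, which via order-preservation applied to the green pair $i(A_0), i(A_3)$ on $b_0$ is equivalent to $A_3 \leqslant A_0$. Equality $A_3 = A_0$ would force $C_3 = C_0$ and make $\Phi_1$ cycle through the innermost green country on each of the three branches, which in turn requires $C$ itself to be the innermost green country on $b_0$---contradicting the hypothesis that some $C' < C$ exists. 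Hence $A_3 < A_0$ strictly. The adjacency structure of green countries on a single branch then guarantees that no green state of $C_3$ can lie at or above $A_0$ (else $C_3$ would have to leapfrog the distinct country $C_0$ containing $A_0$, which is impossible since adjacency cannot bridge a green state belonging to a different country). Every point of $C_3$ thus lies strictly below every point of $C$, yielding $C > \Phi_{1}^{3}(C)$.

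\textbf{Main obstacle.} The technical crux is the black-case local relation: the natural comparison pair $c_{i+1}$ and $f^{2}(c_i)$ lies on distinct branches, so Theorem~\ref{necesary:condition:triod:twist} is not directly applicable. One must patch together the green-case substitutions and invoke Lemma~\ref{black:train} on the maximal black initial segment of the forward orbit of $c_0$ in a manner that returns a usable inequality on $b_0$. Carefully tracking how the two cases alternate across $i = 0, 1, 2$, and verifying that the sign of the resulting chain points in the correct direction ($c_3 \leqslant c_0$ rather than $c_3 \geqslant c_0$), is where the essential bookkeeping lies.
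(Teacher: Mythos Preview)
Your proposal has a genuine gap: the ``stitching'' step that is supposed to yield $c_3 \leqslant c_0$ is never carried out---you flag it yourself as the ``main obstacle'' and leave it unresolved. The difficulty is real. The green-case relation $c_{i+1} \leqslant f^2(c_i)$ does not compose: $c_{i+1}$ is always black, while $f^2(c_i)$ need not be (if $f(c_i)$ lies in a green state of $C_{i+1}$ above $A_{i+1}$, its image may still be green), and order-preservation (Theorem~\ref{necesary:condition:triod:twist}) only lets you push an inequality forward under $f$ when both points land on the same branch. In the black case $c_{i+1}\in b_{i+1}$ while $f^2(c_i)\in b_{i+2}$, so there is no usable local inequality at all; your assertion that the next step ``remains black by the hypothesis of this case'' is wrong, since that hypothesis controls only the color of $f(c_i)$, not of $f^2(c_i)$. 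You then invoke Lemma~\ref{black:train} on ``maximally-black segments of the forward orbit of $c_0$'', but it is never explained how $f^3(c_0) \geqslant c_0$ interacts with the green-case relations, nor how the $c_i$---which are \emph{not} on the forward orbit of $c_0$---enter the picture. The equality case ($A_3 = A_0 \Rightarrow C$ is innermost on $b_0$) is asserted without argument.

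The paper avoids all of this by arguing globally and by contradiction. Assuming both $\Phi_1^3(C) > C$ and the existence of some $C' < C$, it places a marker $\eta_j$ on each branch $b_j$ (the closer to $a$ of $f(c(A_{j-1}))$ and $c(A_j)$, indices mod~$3$) and shows, via Lemma~\ref{black:train} and order-preservation, that the set of points of $P$ lying beyond the $\eta_j$ is $f$-invariant. Since $P$ is a single cycle this set must be all of $P$, yet the assumed country $C'$ consists of green points strictly below $\eta_0$. The invariant-set formulation is precisely what bypasses the need to chain inequalities through points whose color you do not control.
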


\begin{proof}
	Suppose, for the sake of contradiction, that there exists a \emph{green country} $C'$ with $C >  C'$ and $\Phi_1^3(C) >  C$.  	Let $A_0, A_1$ and $A_2$ denote the \emph{green states} of $C$, $\Phi_1(C)$ and $\Phi_1^2(C)$ respectively which are closest to the \emph{branching point} $a$ in the corresponding \emph{branches} $b_0, b_1$, and $b_2$.  For each $i \in \{0,1,2\}$, define $c(A_i) = f(i(A_i))$.  	Let $\eta_{i+1}$ denote the point that is closest to $a$ among $f(c(A_{i}))$ and $c(A_{i+1})$ for $i \in \{0,1\}$ and $\eta_0$ be the point closest to $a$ among $f(c(A_2))$ and  $c(A_0)$. 
	
	By Lemma~\ref{black:train} and Theorems  \ref{necesary:condition:triod:twist} and \ref{black:loop:length:3}, the set of points of $P$ lying farther away from $a$ than $\eta_i, i \in \{0, 1,2\}$  remains invariant under the map $f$. This contradicts the assumption that $C >  C'$, and hence the claim follows.
\end{proof}

\begin{figure}[H]
	\centering
	\includegraphics[width=0.5\textwidth]{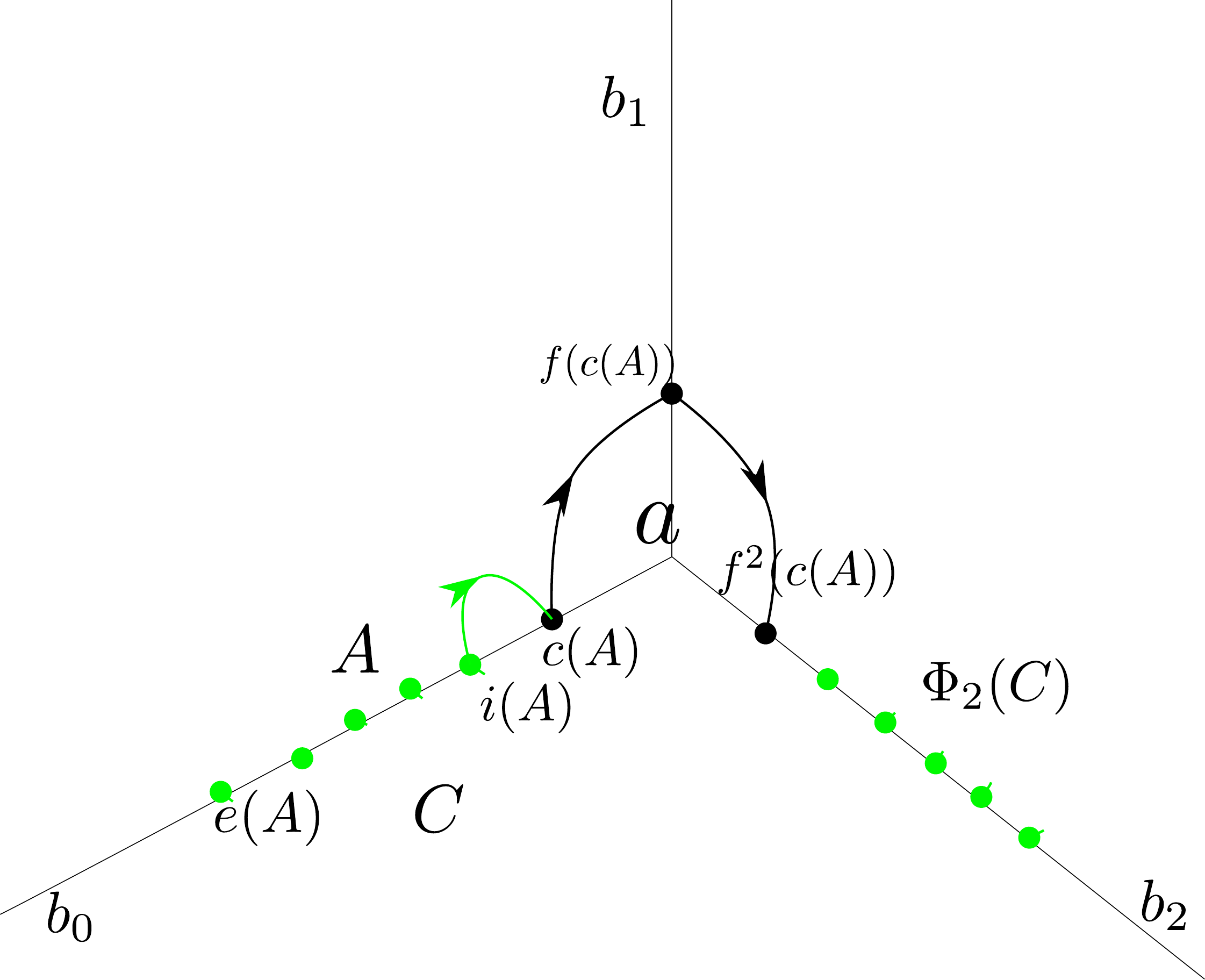}
	\caption{}
	\label{drawing r7}
\end{figure}

\begin{theorem}\label{green:black:movement}
	Let $x \in P$ be any black point. Then there exists a green point $z \in P$ such that $L(x) - L(z) \leqslant \rho$.
\end{theorem}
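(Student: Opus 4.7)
The natural approach is to take $z := f^{-k}(x)$, where $k \geq 1$ is the smallest integer such that $f^{-k}(x)$ is green; such $k$ exists because $\mathcal{G}(P) \neq \emptyset$ when $\rho < \tfrac{1}{3}$ (Theorem \ref{bifurcation:one:third}), and the intermediate iterates $f^{-1}(x), \dots, f^{-(k-1)}(x)$ are then all black. For the base case $k = 1$, the single green step contributes $d(z,x) = 0$, so $t_1 = 0$ in Definition \ref{code:function} and $L(x) - L(z) = \rho$ holds with equality. When $k \geq 4$ with $k - 1$ divisible by $3$, the black train $f(z), f^2(z), \dots, f^k(z) = x$ has its two endpoints on the same branch of $Y$, so Lemma \ref{black:train} yields $x \geq f(z)$, and combining with the non-decreasing code function of Definition \ref{non:decreasing} gives $L(x) \leqslant L(f(z)) = L(z) + \rho$.

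The residual cases $k \in \{2, 3\}$ and $k \geq 5$ with $k-1$ not divisible by $3$ are the delicate ones, where $f(z)$ and $x$ lie on different branches and the direct black-train argument does not close. Here I would either replace $z$ by a more suitable green point---for instance a green point in a green country accessed by reversing the $\Phi_1, \Phi_2$ maps of Definition \ref{closest:country}---or extend the black train past $x$ until it returns to $f(z)$'s branch and then use a forced integer crossing in $t$ to supply the $-1$ shift that offsets the accumulated $\rho$'s. The order-preserving property (Theorem \ref{necesary:condition:triod:twist}), the canonical branch ordering (Theorem \ref{black:loop:length:3}), and the bound on the number of green states from Lemma \ref{first:lemma} jointly constrain the configuration enough to single out such a green point in each subcase.

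The main obstacle is precisely this residual case: the naive backward iteration accumulates roughly $k\rho$ worth of code over the $k-1$ intermediate black steps between $z$ and $x$, and without a compensating integer crossing this overshoots the target bound $\rho$. Rigorously identifying this compensation---either through a clever alternative choice of the green point $z$ or by pushing the train further along until a crossing is forced by the triod--twist structure---is the principal technical difficulty of the argument.
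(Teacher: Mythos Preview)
Your backward-iteration approach is genuinely different from the paper's, and the gap you flag in the residual cases is real and not closed by the fixes you sketch. For $k\not\equiv 1\pmod 3$ the direct computation gives $L(x)-L(z)=k\rho-\lfloor(k-1)/3\rfloor$, which for $k=2,3$ equals $2\rho$ and $3\rho$ respectively and exceeds $\rho$. Your forward-extension idea needs $f(x),f^2(x),\dots$ to remain black, which is not guaranteed, and appealing to the maps $\Phi_i$ of Definition~\ref{closest:country} only relates green \emph{countries}, without singling out a specific green point with the required code. The underlying problem is that the correct green witness $z$ is in general \emph{not} on the backward orbit of $x$ at all, so no amount of local iteration from $x$ will locate it.

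The paper bypasses this by working globally. Rather than iterate from $x$, it considers the three innermost points $c_0,c_1,c_2$ of $P$ (closest to the branch point on each branch), which by monotonicity of $L$ carry the \emph{largest} code on their respective branches. Using the order-preserving property (Theorem~\ref{necesary:condition:triod:twist}) it shows that at least one $c_i$ is the image of a green point $d_i$, so $L(c_i)-L(d_i)=\rho$; it then handles the remaining branches by a short dichotomy on whether $c_{i+1}=f(c_i)$ (in which case the same $d_i$ works, with bound $2\rho-\tfrac13<\rho$) or not (in which case $c_{i+1}$ must have its own green preimage). Since every black $\mu\in b_i$ satisfies $L(\mu)\leqslant L(c_i)$, a single green witness per branch suffices, and no case analysis on $k$ is needed. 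This reduction to the three innermost points is the missing idea in your argument.
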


\begin{proof}
	Assume that the \emph{branches} of $Y$ are \emph{canonically ordered} so that, in each \emph{branch} $b_i$ ($i \in \{0,1,2\}$), the point $c_i$ nearest to the \emph{branching point} $a$ is \emph{black}.  
	
	We begin by showing that at least one of the points $c_i$ must be the image of a \emph{green point}. Suppose, for contradiction, that each $c_i$ is the image of a \emph{black point}. Let $d_i$ and $e_i$ denote, respectively, the pre-image and image of $c_i$. Then the order relations satisfy $d_0 \geqslant c_2$, $d_2 \geqslant c_1$, and $d_1 \geqslant c_0$. Since $\rho \neq \frac{1}{3}$, at least one of these inequalities is strict.  
	
	Assume $d_1 > c_0$. Because $P$ is \emph{order-preserving}, we have $f(d_1) > f(c_0)$, which implies $c_1 > e_0$. This contradicts the definition of $c_1$ as the point in \emph{branch} $b_1$ closest to $a$. Therefore, our assumption is false, and the claim follows that at least one $c_i$ is the image of a \emph{green point}.  
	
	Without loss of generality, let $d_0$ be such a \emph{green} point. Clearly, $L(c_0) - L(d_0) = \rho$. Hence, for any \emph{black} point $\mu \in b_0$, we have $L(\mu) - L(d_0) \leqslant L(c_0) - L(d_0) = \rho$.  
	
			\smallskip 
		\smallskip 
		\smallskip
		\smallskip 
		\smallskip

	Now, consider a \emph{black} point $\eta \in b_1$. Two cases arise:  
	
	Case 1: If $c_1 = f(c_0)$, then
	$L(\eta) - L(d_0) \leqslant L(c_1) - L(d_0) = L(f(c_0)) - L(d_0) = L(c_0) + \rho - \tfrac{1}{3} - L(d_0) = 2\rho - \tfrac{1}{3} <  \rho.
	$
	
	\smallskip 
	\smallskip

	Case 2: If $f(c_0) \neq c_1$, then $c_1$ must be the image of a \emph{green point} $z_1 \in b_1$. Indeed, otherwise,  by Theorem \ref{necesary:condition:triod:twist}, $f^{-1}(c_1)$ would lie in \emph{branch} $b_0$ with $c_0 > f^{-1}(c_1)$. This would contradict the fact that $c_0$ is the point of $P$ closest to the \emph{branching point} $a$ in the \emph{branch} $b_0$. Now, $L(c_1) - L(z_1) = \rho$, and therefore $L(\eta) - L(z_1) \leqslant L(c_1) - L(z_1) = \rho$.  
	
	\smallskip 
	\smallskip 
		\smallskip
		\smallskip 
		\smallskip

	Finally, consider a \emph{black} point $\xi \in b_2$. From the previous step, there exists a \emph{green} point $g$ such that $L(c_1) - L(g) \leqslant \rho$.  
	
	Case 1: If $f(c_1) = c_2$, then $L(\xi) - L(g) \leqslant L(c_2) - L(g) = L(c_1) + \rho -\frac{1}{3} - L(g)\leqslant 2\rho- \frac{1}{3} \leqslant \rho$.  
	
	\smallskip 
	\smallskip

	Case 2: If $f(c_1) \neq c_2$, then, as before, $c_2$ must be the image of a \emph{green point} $z_2 \in b_2$. Otherwise, by Theorem \ref{necesary:condition:triod:twist},  $f^{-1}(c_2)$ would lie in branch $b_1$, with $c_1 > f^{-1}(c_2)$, this would contradict the definition of $c_1$ as the closest point to $a$ point of $P$ in $b_1$. Thus, $L(c_2) - L(z_2) = \rho$, and consequently $L(\xi) - L(z_2) \leqslant L(c_2) - L(z_2) = \rho$.  
	
	This completes the proof.
\end{proof}

\begin{figure}[H]
	\centering
	\includegraphics[width=0.4\textwidth]{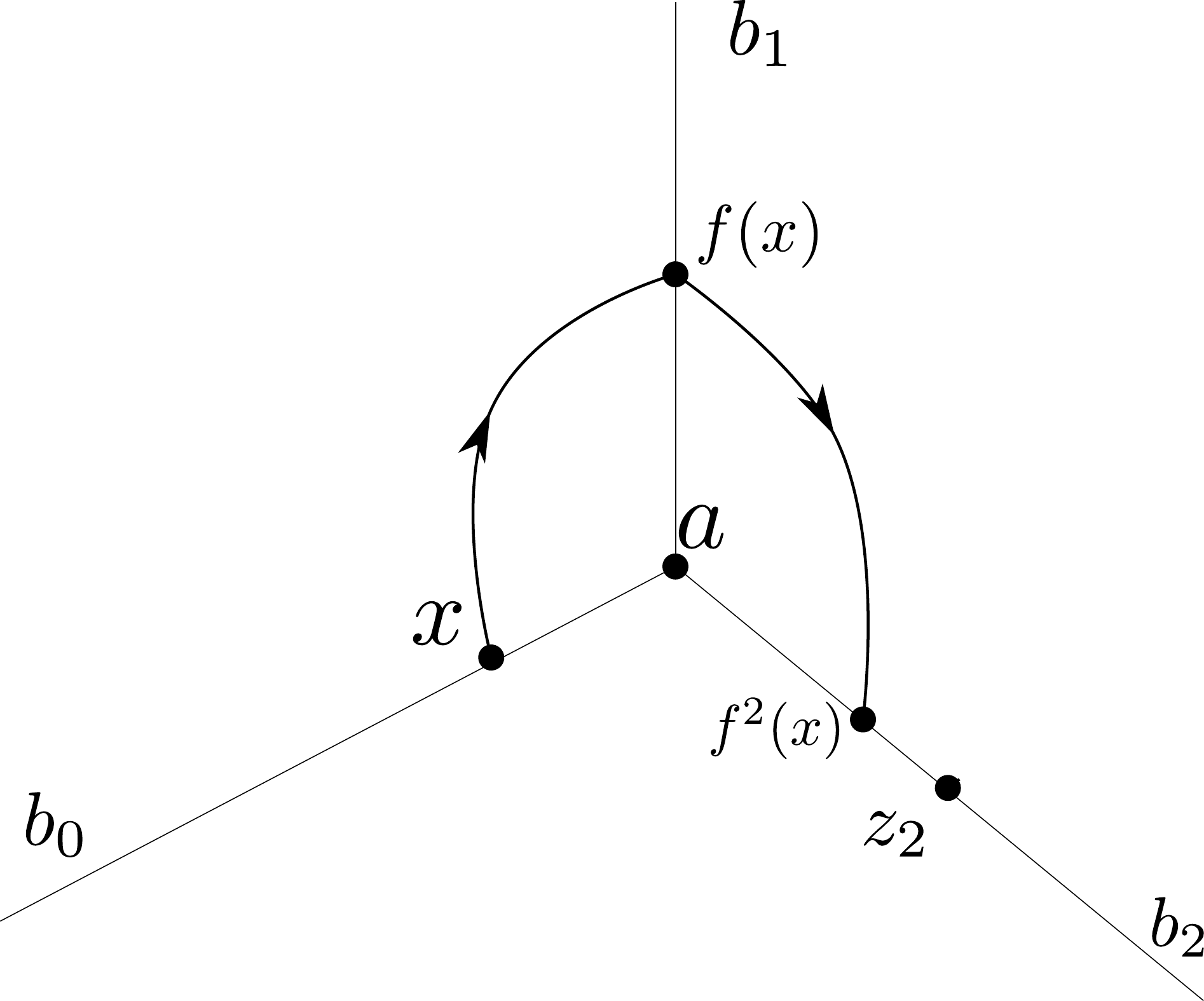}
	\caption{}
	\label{drawing r8}
\end{figure}

\begin{theorem}\label{black:green:movement}
	Let $x \in P$ be any black point. Then there exists a green point $z \in P$ such that $L(z) - L(x) \leqslant \rho$.
\end{theorem}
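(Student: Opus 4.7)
The plan is to exploit the forward orbit of $x$ together with the combinatorial identity $L(f(u)) - L(u) = \rho - d(u, f(u))$ that underlies every calculation in Section~\ref{bound:phase:function}. Because $\rho < \tfrac{1}{3}$, every black arrow in the fundamental point loop decreases $L$ by $\tfrac{1}{3} - \rho > 0$, whereas every green arrow increases $L$ by exactly $\rho$. Hence the code-distance from any black point to the first green point encountered by iterating $f$ forward is easy to control.

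Given a black point $x \in P$, I let $j \geqslant 1$ be the smallest positive integer such that $z := f^{j}(x)$ is green. Such a $j$ exists because, by Theorem~\ref{bifurcation:one:third}(1), $P$ contains green points when $\rho < \tfrac{1}{3}$, and $f$ permutes $P$ cyclically. By the minimality of $j$, the points $x, f(x), \dots, f^{j-1}(x)$ are all black, so every arrow in the chain $x \to f(x) \to \dots \to f^{j-1}(x) \to z$ is black, each contributing displacement $\tfrac{1}{3}$, and therefore $t_j = j/3$. Telescoping the increments of the code function gives
\[
L(z) - L(x) \;=\; j\rho - t_j \;=\; j\!\left(\rho - \tfrac{1}{3}\right) \;\leqslant\; 0 \;\leqslant\; \rho,
\]
which is the required inequality, so the minimal forward green $z$ serves as the witness.

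I do not foresee a serious obstacle: the argument rests only on the additive identity above and on the guaranteed presence of green points in $P$ for $\rho < \tfrac{1}{3}$. If a proof structurally parallel to Theorem~\ref{green:black:movement} is preferred---there the argument used the points $c_i$ closest to $a$---one would instead work with the points $e_0, e_1, e_2$ farthest from $a$ on each branch, show that at least one of them is green by arguing that otherwise order preservation would force $\{e_0, e_1, e_2\}$ to be a primitive $3$-cycle (contradicting $\rho \neq \tfrac{1}{3}$), and then propagate the bound to the remaining branches via direct black arrows or green intermediaries. The forward-iteration approach is, however, notably shorter and yields the bound with considerable slack.
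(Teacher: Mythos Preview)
Your argument is correct and is considerably more economical than the paper's. The paper establishes this theorem via a branch-by-branch case analysis: assuming $x\in b_0$, it first looks for a green point $z_0>x$ in $b_0$, then for a green point $z_1\geqslant f(x)$ in $b_1$, then for a green point $z_2\geqslant f^2(x)$ in $b_2$, obtaining in turn the bounds $0$, $\rho-\tfrac13$, and $2\rho-\tfrac23$; it then invokes Lemma~\ref{black:train} together with an invariance argument to rule out the remaining possibility that none of these three positions carries a green point. Your approach sidesteps this entirely: iterating $f$ forward along the fundamental loop until the first green point $z=f^{j}(x)$, every arrow traversed is black and contributes $\rho-\tfrac13<0$ to the code, so $L(z)-L(x)=j(\rho-\tfrac13)\leqslant 0$, which is in fact sharper than the stated bound $\rho$. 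The paper's route has the minor advantage of pinning down explicit geometric locations for the witness $z$ on specific branches, but your forward-iteration argument is the natural one here and would serve the downstream application in Theorem~\ref{thm:chi-bound-final} just as well.
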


\begin{proof}
	Assume that $x \in b_0$.  	If there exists a \emph{green point} $z_0 \in b_0$ satisfying $z_0 >  x$, then clearly 	$L(z_0) - L(x) < 0$.  	Next, if there exists a \emph{green point} $z_1 \in b_1$ such that $z_1 \geqslant f(x)$, then  $L(z_1) - L(x) = L(z_1) - \big\{ L(f(x)) - \rho + \tfrac{1}{3} \big\} = L(z_1) - L(f(x)) + \rho - \tfrac{1}{3} \leqslant \rho - \tfrac{1}{3} < \rho$.
	
	Suppose now that there are no \emph{green points} in either of the sets $\{ t \in P : t >  x \}$ or $\{ t \in P : t \geqslant f(x) \}$.  
	Since $f(x)$ is \emph{black}, it follows that $f^2(x) \in b_2$.  
	If there exists a \emph{green point} $z_2 \geqslant f^2(x)$ (see Figure~\ref{drawing r8}), then  $L(z_2) - L(x) = L(z_2) - \big\{ L(f(x)) - \rho + \tfrac{1}{3} \big\}  = L(z_2) - \big\{ L(f^2(x)) - \rho + \tfrac{1}{3} \big\} + \rho - \tfrac{1}{3}  = L(z_2) + 2\rho - \tfrac{2}{3} - L(f^2(x)) \leqslant 2\rho - \tfrac{2}{3} \leqslant \rho,$ as $\rho < \frac{1}{3}$. 

	Finally, suppose that no \emph{green point} $z_2$ exists with $z_2 \geqslant f^2(x)$.  
	Then, by Lemma~\ref{black:train}, all points of $P$ farther away than $a$ than $x$, $f(x)$, and $f^2(x)$ from the \emph{branching point} $a$ are \emph{black}.  This means either the points of $P$ farther away from $a$ than $x, f(x) , f^2(x)$ are invariant under $f$ or   $P$  contains no \emph{green points}, and its \emph{rotation number} is $\tfrac{1}{3}$, both of which leads to a contradiction. 	Among the quantities $\rho$, $\rho - \tfrac{1}{3}$, and $2\rho - \tfrac{2}{3}$, the largest is $\rho$, since $\rho \leqslant \frac{1}{3}$. 	Hence, the conclusion follows.
\end{proof}

\begin{theorem}\label{thm:chi-bound-final}
	Let $P$ be a triod–twist cycle of modality $m$ and rotation number $\rho$, $0 \leqslant \rho < \tfrac{1}{3}$. Then, 	$\chi(P) <  m + 3$. 
\end{theorem}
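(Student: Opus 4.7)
The plan is to reduce the estimation of $\chi(P)$ to an estimate on the green set $G(P)$, and then to exploit the combinatorial bookkeeping of green states and green countries together with Lemma~\ref{first:lemma}. First I would pick $u,v\in P$ with $L(u)-L(v)=\chi(P)$; applying Theorem~\ref{green:black:movement} to $u$ (if black) and Theorem~\ref{black:green:movement} to $v$ (if black) yields green points $u',v'\in G(P)$ with $L(u)-L(u')\leqslant \rho$ and $L(v')-L(v)\leqslant \rho$. Consequently $\chi(P)\leqslant \chi(G(P))+2\rho$, and it will suffice to show $\chi(G(P))<m+3-2\rho$.

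Next I would partition $G(P)$ into its green countries $C_1,\dots,C_c$, with $g_i$ green states in $C_i$ and $\sum_i g_i=g=|\mathcal{G}(P)|$. Lemma~\ref{first:lemma} gives $g<\tfrac{m}{2}+2$, hence, since $g$ is an integer, $2g\leqslant m+3$. Theorem~\ref{green:country:phase:bound:1} supplies the intra-country bound $\chi(C_i)\leqslant g_i(1-2\rho)-\rho$. To pass from this local estimate to a global one I would use the maps $\Phi_1,\Phi_2$ of Definition~\ref{closest:country} to link countries lying on different branches. Each $\Phi_1$-step arises from two applications of $f$, and along any $f$-train consecutive codes differ by at most $\rho$; thus a $\Phi_1$-step contributes at most $2\rho$ to the code (up to integer shifts built into the definition of $L$), and a $\Phi_2$-step at most $3\rho$. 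Theorem~\ref{phi:cube:closer} guarantees that $\Phi_1^3$ strictly descends within each branch, so iterating $\Phi_1$ reaches every country in a number of steps bounded in terms of $c$. The degenerate configuration where neither $\Phi_1(C)$ nor $\Phi_2(C)$ exists is handled by Theorem~\ref{no:phi:1:2}: the green set then lives on a single branch and the desired bound follows at once from Theorem~\ref{green:country:phase:bound:1} together with $g\leqslant\tfrac{m+3}{2}$.

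Assembling these ingredients I expect an inequality of the form
\[
\chi(G(P))\;\leqslant\;\sum_{i=1}^{c}\chi(C_i)\;+\;K\;\leqslant\;g(1-2\rho)-c\rho+K,
\]
where $K$ records the total contribution of the controlled inter-country $\Phi_1,\Phi_2$-jumps. With $c\leqslant g$ and $2g\leqslant m+3$, and after careful accounting of the integer part $[t_k]$ in the definition of $L$, the right-hand side should fall strictly below $m+3-2\rho$; combining with the initial reduction then yields $\chi(P)<m+3$.

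The main obstacle will be the inter-country estimate itself. The green countries are distributed over three branches, and the maps $\Phi_1,\Phi_2$ only partially order them via Theorem~\ref{phi:cube:closer}. Constructing a path through every country whose cumulative code change fits inside the residual budget $m+3-2\rho-\sum_i\chi(C_i)$ requires a careful case analysis, especially around the ``boundary'' countries where $\Phi_1$ or $\Phi_2$ cease to exist (where Theorem~\ref{no:phi:1:2} kicks in). Tracking the integer jumps appearing in the code function while chaining $f$-trains across the branches is, I expect, the subtlest technical point.
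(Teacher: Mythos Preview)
Your plan is essentially the paper's proof: reduce $\chi(P)$ to $\chi(G(P))$ via Theorems~\ref{green:black:movement}--\ref{black:green:movement}, bound each green country by Theorem~\ref{green:country:phase:bound:1}, link countries using $\Phi_1,\Phi_2$ and Theorems~\ref{no:phi:1:2}--\ref{phi:cube:closer}, and finish with Lemma~\ref{first:lemma}.

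Two small points where you differ from the paper may save you some of the case analysis you are worried about. First, your inter-country cost estimates are looser than needed: a $\Phi_1$-transition is one \emph{green} $f$-step followed by one \emph{black} $f$-step, so the code changes by $\rho+(\rho-\tfrac13)=2\rho-\tfrac13$, not $2\rho$; similarly a $\Phi_2$-transition costs $3\rho-\tfrac23<2\rho-\tfrac13$. Using $2\rho-\tfrac13$ uniformly, the ``integer jumps'' you flag as the subtle point are already absorbed, and no separate tracking is required. Second, you do not need a path through \emph{every} country; you only need, for the two extremal green points, a mixed train joining them that visits each country \emph{at most once}. Theorem~\ref{phi:cube:closer} (descent of $\Phi_1^3$) together with Theorem~\ref{no:phi:1:2} gives exactly this, yielding at most $\ell-1$ inter-country transitions. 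With these sharpenings the bookkeeping collapses to $\chi(G(P))\le K(1-2\rho)-\ell\rho+(\ell-1)(2\rho-\tfrac13)\le K-\tfrac{\ell-1}{3}$, and the bound $\chi(P)<m+3$ follows directly from $K<\tfrac{m}{2}+2$. (Your reduction with $+2\rho$ rather than $+\rho$ is in fact the careful one; it makes no difference to the final inequality.)
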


\begin{proof}
	By Theorems~\ref{green:black:movement} and~\ref{black:green:movement}, 
	every \emph{black point} of $P$ lies within $\rho$ (in \emph{code}) of some \emph{green point}, 
	both above and below.  Consequently, the total 
	oscillation $\chi(P)$ of the \emph{code function} satisfies $	\chi(P) \leqslant  \chi(G(P)) + \rho$, where $G(P)$ denotes the collection of  all \emph{green points} of $P$. 
	Hence, it suffices to obtain an upper bound for $\chi(G(P))$.
	
	Let $\mathcal{C}(P) = \{C_1, C_2, \dots, C_{\ell}\}$ denote the collection 
	of all \emph{green countries} of $P$, and let $k_i$ be the number of 
	\emph{green states} contained in the \emph{green country} $C_i$. By Theorem~\ref{green:country:phase:bound:1},
	each \emph{green country} satisfies $\chi(C_i) \leqslant  k_i(1 - 2\rho) - \rho$.	Since $P$ is finite, the set of \emph{green countries} is also finite. 
	By Definition~\ref{closest:country} and Theorems~\ref{no:phi:1:2}–\ref{phi:cube:closer},
	for any two \emph{green points} $g_1, g_2 \in G(P)$ there exists a 
	\emph{mixed train} connecting them that visits each \emph{green country} 
	at most once. Every time the \emph{train} moves from one \emph{green country} to another, 
	the \emph{code} increases by less than $2\rho - \tfrac{1}{3}$. 
	Since at most $\ell - 1$ such inter-\emph{country} transitions can occur, we have, 	$	\chi(G(P))	\leqslant  \displaystyle  \sum_{i=1}^{\ell} \big(k_i(1 - 2\rho) - \rho\big)
	+ (\ell - 1)\!\left(2\rho - \frac{1}{3}\right)$.
	
	Let $K = \displaystyle  \sum_{i=1}^{\ell} k_i$ denote the total number of \emph{green states}. 
	Then, $\chi(G(P)) \leqslant K(1 - 2\rho) - \ell\rho + (\ell - 1)\!\left(2\rho - \frac{1}{3}\right) = K - (2K - \ell + 2)\rho - \frac{\ell - 1}{3}$.

	Since $K \geqslant  \ell$, the coefficient of $\rho$ is positive, 
	so the right-hand side is maximized when $\rho \to 0$. 
	Thus, $\chi(G(P)) \leqslant  K - \frac{\ell - 1}{3}$.  Substituting this bound into the earlier inequality gives $	\chi(P) \leqslant K - \frac{\ell - 1}{3} + \rho \leqslant  K - \frac{\ell - 2}{3}$. By Lemma \ref{first:lemma}, $K <   \frac{m}{2} + 2$ and $\ell > 1$, we have  $-\tfrac{\ell - 2}{3} <  1$. Hence, $\chi(P) \leqslant K + 1 <  (\frac{m}{2}) + 3 < m + 3$. Therefore, the claimed upper bound holds for all \emph{triod–twist cycles} 	with $0 \leqslant \rho \leqslant \tfrac{1}{3}$. 
\end{proof}

\subsection*{3.2. Case II: $\rho > \tfrac{1}{3}$}

According to Theorem \ref{bifurcation:one:third}, when $\rho >  \tfrac{1}{3}$, every \emph{triod–twist cycle} $P$ consists exclusively of \emph{red} and \emph{black} points.
By Definition \ref{non:decreasing} and Theorem \ref{tri-od:rot:twist:order:inv}, if $x,y\in P$ lie on the same \emph{branch} of $Y$ with $x > y$, then $L(x) >  L(y)$.

\begin{theorem}\label{rho:less:half}
	Every triod--twist pattern $\pi$ has rotation number $\rho < \tfrac12$.
\end{theorem}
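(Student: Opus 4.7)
The plan is to argue by contradiction. Suppose $\pi$ is a triod--twist pattern with rotation number $\rho \geqslant \tfrac{1}{2}$. I will show that $\pi$ forces a primitive pattern of period $2$, contradicting the regularity built into the definition of a triod--twist pattern (see Definition~\ref{regular:defn}).

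The first step is to locate $\tfrac{1}{2}$ inside the \emph{mrp}-interval of $\pi$. By Theorem~\ref{result:1}, one has $mrp(\pi) = [(t_1, m_1),(t_2, m_2)]$. On one hand, Theorem~\ref{black:loop:length:3}(4) says that $\pi$ forces a primitive $3$-cycle, whose \emph{mrp} is $(\tfrac{1}{3},1)$; hence $t_1 \leqslant \tfrac{1}{3}$. On the other hand, the \emph{mrp} of $\pi$ itself lies on the prong at $\rho$, so $t_2 \geqslant \rho \geqslant \tfrac{1}{2}$. Combining these gives $\tfrac{1}{2} \in [t_1, t_2]$.

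The second step is to extract the specific point $(\tfrac{1}{2},1)$ from $mrp(\pi)$. If $t_1 < \tfrac{1}{2} < t_2$, then by the description of the \emph{mrp}-interval, $(\tfrac{1}{2}, m) \in mrp(\pi)$ for every $m \in \mathbb{N}$, and in particular for $m=1$. If instead $\tfrac{1}{2}$ coincides with an endpoint $t_i$, then $(\tfrac{1}{2}, m) \in mrp(\pi)$ for every $m \in Sh(m_i)$; since $1$ is the minimal element in the Sharkovsky order, $1 \in Sh(k)$ for every $k \in \mathbb{N}$, and so $(\tfrac{1}{2},1) \in mrp(\pi)$ in this case as well.

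The third step is to interpret $(\tfrac{1}{2}, 1)$. A pattern with this \emph{mrp} has rotation pair $(1,2)$, i.e.\ a cycle of period $2$ with total displacement $1$. Since each individual displacement belongs to $\{0, \tfrac{1}{3}, \tfrac{2}{3}\}$, the two arrows of the cycle must carry displacements $\tfrac{1}{3}$ and $\tfrac{2}{3}$, which forces the two points to lie on different branches of $Y$. This is precisely a primitive $2$-cycle. Hence $\pi$ forces a primitive pattern of period $2$, contradicting regularity, and so $\rho < \tfrac{1}{2}$. The argument is essentially bookkeeping once Theorem~\ref{result:1} is available; the only part requiring mild care is the case split in step two, where the observation that $1 \in Sh(m)$ for every $m \in \mathbb{N}$ disposes of the possibility that $\tfrac{1}{2}$ sits on the boundary of the \emph{mrp}-interval.
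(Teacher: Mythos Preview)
Your proof is correct and takes a genuinely different route from the paper's.

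The paper argues by color counting. Writing $b$ and $r$ for the numbers of \emph{black} and \emph{red} points of a representative cycle $P$ (there are no \emph{green} points once $\rho>\tfrac13$), one has $\rho=(b+2r)/\bigl(3(b+r)\bigr)$, so the inequality $\rho<\tfrac12$ is equivalent to $b>r$. To prove $b>r$, the paper assumes $r\geqslant b$, argues that $b$ and $r$ are each divisible by $3$, and then claims the resulting rotation pair is not coprime; invoking Theorem~\ref{result:1} then yields a strictly shorter cycle with the same rotation number, contradicting the triod--twist property.

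Your argument bypasses the color count entirely. You place $\tfrac12$ inside the $mrp$-interval of $\pi$ using the forced primitive $3$-cycle on the left (Theorem~\ref{black:loop:length:3}(4)) and $\rho$ itself on the right, extract $(\tfrac12,1)$ via the Sharkovsky tail $Sh(m_i)\ni 1$, and observe that any pattern with rotation pair $(1,2)$ must have displacements $\tfrac13+\tfrac23$ and is therefore a primitive $2$-cycle. This is shorter and more structural, and it actually proves more: the only hypotheses you use are regularity and Theorem~\ref{black:loop:length:3}(4), so the same reasoning gives $\rho<\tfrac12$ for \emph{every} regular pattern, not just the triod--twist ones. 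The paper's approach, by contrast, leans on the triod--twist hypothesis in an essential way (through the coprimality step), though both routes ultimately appeal to Theorem~\ref{result:1}.
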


\begin{proof}
	Let $P$ be a cycle that \emph{exhibits} the pattern $\pi$, and let $f$ be the associated $P$--\emph{linear} map. Denote by $b$ and $r$ the numbers of \emph{black} and \emph{red} points of $P$, respectively. We first claim that $b>r$.
	
	Assume, to the contrary, that $r \geqslant b$. Since each point of the cycle must eventually return to the \emph{branch} on which it lies, the total numbers of \emph{black} and \emph{red} points must each be divisible by three. Hence, there exist $k_1,k_2 \in \mathbb{N}$ such that	$b = 3k_1 \quad \text{and} \quad r = 3k_2$, with $k_2 \geqslant k_1$. By the definition of the \emph{rotation number} (see Section~1), we obtain $\rho = \frac{b+2r}{3(b+r)} = \frac{3(k_1+2k_2)}{9(k_1+k_2)}, $ $ k_2 \geqslant k_1 $. Clearly, the \emph{rotation pair} of $P$ is not \emph{co-prime} and hence by Theorem \ref{result:1}, $P$ cannot be a \emph{triod-twist} cycle. 
	Hence, the assumption $r \geqslant b$ is false, and we conclude that $b>r$.	Substituting this inequality into the expression for $\rho$, we obtain	$\rho = \frac{b+2r}{3(b+r)} < \frac{b+2b}{3(2b)} = \frac12$, which completes the proof.
\end{proof}

\begin{theorem}\label{red:state:phase:bound}
	For every red state $R \in \mathcal{R}(P)$ of a triod–twist cycle $P$ with rotation number $\rho >  \tfrac{1}{3}$, one has	$\chi(R) \leqslant  3\rho - 1$,where $\chi(R) = \sup\{\,L(x) - L(y) : x, y \in R\,\}$ denotes the oscillation of the code function $L$ on $R$.
\end{theorem}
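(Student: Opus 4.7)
The proof closely parallels that of Theorem \ref{green: state:phase:bound}, except that the direction of monotonicity of the \emph{code function} is reversed. The strategy is to produce a chain of three \emph{black} points whose iterates under $f$ carry $L$ from below $i(R)$ up to above $e(R)$, and then exploit the fact that for $\rho>\tfrac{1}{3}$ every \emph{black} step contributes $\rho-\tfrac{1}{3}>0$ to $L$.

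I would begin by letting $t(R)$ be the \emph{black} point of $P$ adjacent to $i(R)$ on the side of the \emph{branching point} $a$. Its existence is guaranteed by the \emph{canonical ordering} of branches (Theorem \ref{black:loop:length:3}): the point of $P$ closest to $a$ on every \emph{branch} must be \emph{black}, and since $i(R)$ is \emph{red}, some \emph{black} point must lie strictly between $i(R)$ and $a$ on the same \emph{branch}.

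Next, applying the same non-invariance argument used in the \emph{green} case---the sets $\{p\in P: t(R)\geqslant p\}$ and $\{p\in P: p\geqslant e(R)\}$ cannot be invariant under $f$, since otherwise $P$ would fail to be a single cycle---and combining this with Theorems \ref{bifurcation:one:third}, \ref{necesary:condition:triod:twist}, and \ref{black:loop:length:3}, I would produce \emph{black} points $x,y,z\in P$ with
\[
t(R)\geqslant x,\qquad f(x)\geqslant y,\qquad f(y)\geqslant z,\qquad f(z)\geqslant e(R).
\]
Because $\rho>\tfrac{1}{3}$ forces $L$ to be increasing with distance from $a$ (Definition \ref{non:decreasing} and Theorem \ref{tri-od:rot:twist:order:inv}), the first chain of inequalities yields $L(x)\leqslant L(t(R))\leqslant L(i(R))$, while $f(z)\geqslant e(R)$ gives $L(f(z))\geqslant L(e(R))$. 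Combining the two estimates gives $\chi(R)\leqslant L(f(z))-L(x)$.

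The final step is a short telescoping bound. Since $x,y,z$ are \emph{black}, each application of $f$ raises the code by $\rho-\tfrac{1}{3}$; the slack inequalities $L(y)\leqslant L(f(x))$ and $L(z)\leqslant L(f(y))$ (again from $L$ increasing with height together with $f(x)\geqslant y$ and $f(y)\geqslant z$) show that no additional loss is accumulated between consecutive steps. Chaining these, one obtains $L(f(z))\leqslant L(x)+3(\rho-\tfrac{1}{3})=L(x)+3\rho-1$, whence $\chi(R)\leqslant 3\rho-1$. The main obstacle, just as in the \emph{green} case, is establishing the existence of the chain $x,y,z$; but since the order-preservation and structural results \ref{bifurcation:one:third}, \ref{necesary:condition:triod:twist}, and \ref{black:loop:length:3} remain in force in the present regime, this step is carried out by the same reasoning used for Theorem \ref{green: state:phase:bound}.
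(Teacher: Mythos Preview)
Your proposal has a gap at the step where you ``produce \emph{black} points $x,y,z$'' by analogy with Theorem~\ref{green: state:phase:bound}. In the green regime that argument succeeds because of Theorem~\ref{black:loop:length:3}(2): every \emph{green} point satisfies $x>f(x)$, so green steps can only carry the orbit \emph{inward}. Any chain that starts below $t(A)$ and must finish at or above $e(A)$ is therefore forced to use black steps exclusively. For $\rho>\tfrac13$ there is no analogue of this inward-motion property---a \emph{red} point jumps two branches and its image may well land at or above $e(R)$. Consequently, the non-invariance of $\{p:t(R)\geqslant p\}$ and $\{p:p\geqslant e(R)\}$, together with Theorems~\ref{bifurcation:one:third}, \ref{necesary:condition:triod:twist} and~\ref{black:loop:length:3}, only guarantees that \emph{some} chain reaches from below $i(R)$ to above $e(R)$; nothing forces that chain to be all-black. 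Concretely, it may happen that $f^{-1}(e(R))$ (and the preimage of every point of $P\cap b_0$ lying at or above $e(R)$) is a \emph{red} point in $b_1$, in which case no black $z\in b_2$ satisfies $f(z)\geqslant e(R)$ and your chain cannot be completed.

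The paper's proof handles exactly this issue by splitting into two cases: starting from the black point $\xi(R)$ adjacent to $i(R)$, either one reaches above $e(R)$ via a single \emph{red} intermediate point $\eta_1$ with $f(\xi(R))\geqslant\eta_1$ and $f(\eta_1)\geqslant e(R)$ (yielding the sharper bound $2\rho-1$), or via two \emph{black} intermediate points $\eta_2,\eta_3$ (yielding $3\rho-1$). At least one route is always available, and both give $\chi(R)\leqslant 3\rho-1$. Your telescoping computation is correct once a valid chain is in hand, so the repair is simply to admit the possibility of a red intermediate step and treat it as a separate (easier) case.
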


\begin{proof}
	Let $R$ be a \emph{red state} of $P$, and let $b_0$ denote the \emph{branch} of $Y$ containing $R$.  Let $i(R)$ be the point of $R$ closest to $a$  and $\xi(R)$ be the \emph{black point} just adjacent to $i(R)$ in the direction towards $a$. 	Such a point exists by the \emph{canonical ordering} of \emph{branches} (Theorem \ref{black:loop:length:3}).
	
	Because the sets 	$\{t \in P : i(R) \geqslant t\}$ and 	$\{t \in P : t \geqslant e(R)\}$ 
	are not invariant under $f$, at least one of the following two cases must occur.

	\smallskip 
\smallskip 
\smallskip 
\smallskip

	Case 1: There exists a \emph{red point} $\eta_1(R)$ such that  
	$f(\xi(R)) \geqslant \eta_1(R)$ and $f(\eta_1(R)) \geqslant e(R)$ (see Figure~\ref{drawing r9}). We have, 	$\chi(R) \leqslant L(e(R)) - L(i(R)) \leqslant L(f(\eta_1(R))) - L(\xi(R)) = L(\eta_1(R)) - L(f(\xi(R))) + 2\rho - \tfrac{1}{3} - \tfrac{2}{3} = L(\eta_1(R)) - L(f(\xi(R))) + 2\rho - 1 \leqslant 2\rho - 1$. Thus, $\chi(R) \leqslant 2\rho - 1$ in this case.

		\smallskip 
	\smallskip 
	\smallskip 
	\smallskip 

	Case 2: There exist \emph{black points} $\eta_2(R), \eta_3(R) \in P$ such that  
	$f(\xi(R)) \geqslant \eta_2(R)$, $f(\eta_2(R)) \geqslant \eta_3(R)$, and $f(\eta_3(R)) \geqslant e(R)$.	Then
	$\chi(R) \leqslant L(e(R)) - L(i(R)) \leqslant L(f(\eta_3(R))) - L(\xi(R)) =  L(\eta_3(R)) - L(f(\xi(R))) + 2\rho - \tfrac{2}{3} \leqslant L(\eta_3(R)) - L(\eta_2(R)) + 2\rho - \tfrac{2}{3} = L(\eta_3(R)) - L(f(\eta_2(R))) + 3\rho - 1 \leqslant 3\rho - 1$.
	
		\smallskip 
	\smallskip 
	\smallskip 
	\smallskip

	In both cases, the difference in \emph{code values} between the endpoints of $R$ is bounded above by $3\rho - 1$.  Hence,	$\chi(R) \leqslant  3\rho - 1$. 	This completes the proof.
\end{proof}

\begin{figure}[H]
	\centering
	\includegraphics[width=0.5\textwidth]{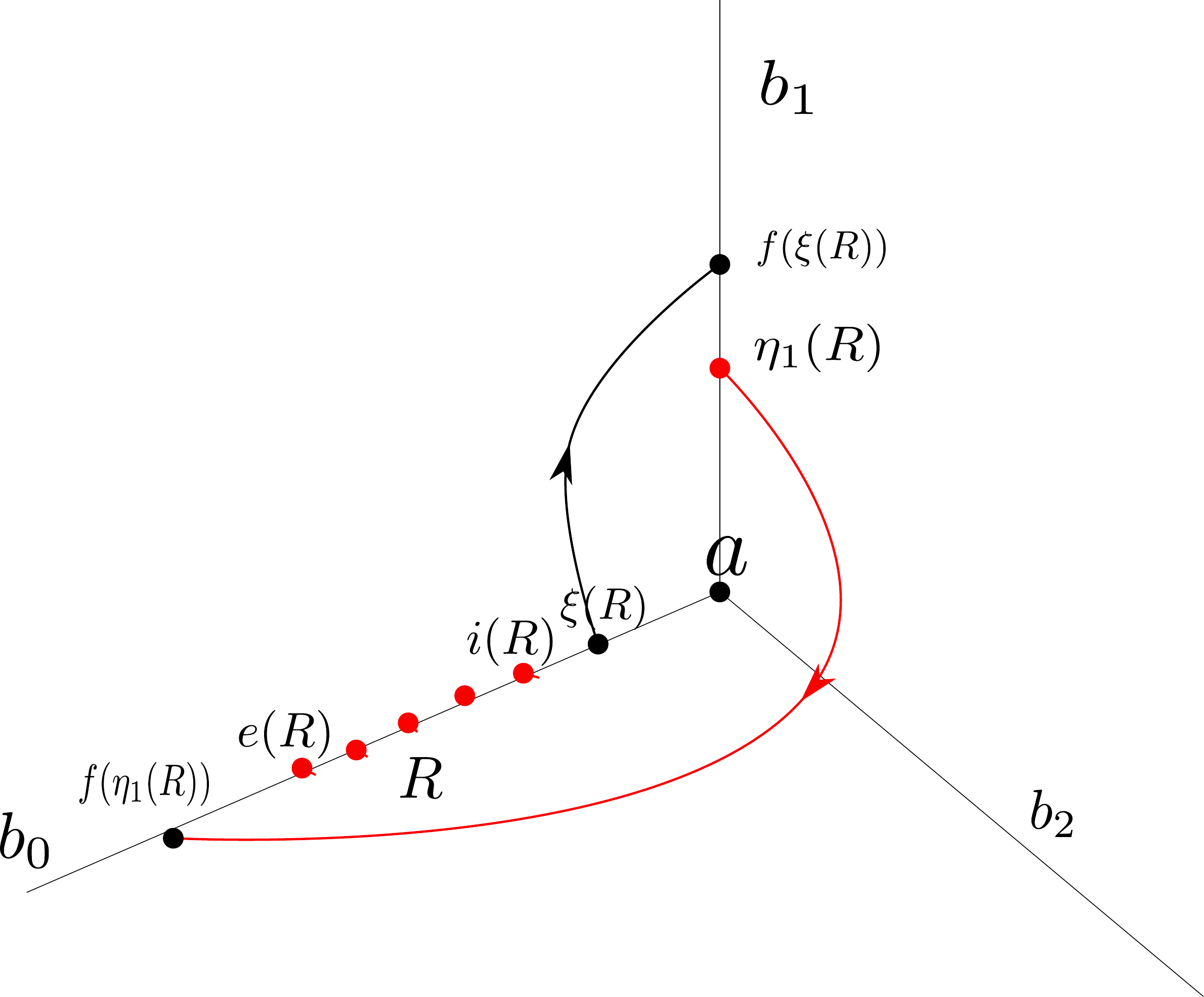}
	\caption{}
	\label{drawing r9}
\end{figure}

\begin{theorem}\label{movement:b0:b2}
	Suppose the branches $b_0$ and $b_2$ of the triod $Y$ both contain red states of the triod–twist cycle $P$.  	Let $R_0$ and $R_2$ be the red states in $b_0$ and $b_2$, respectively that lie closest to the branching point $a$.  Then there exist points $x \in R_0$ and $y \in R_2$ such that $L(x) - L(y) < 2\rho - \tfrac{2}{3}$, where $\rho$ is the rotation number of $P$.
\end{theorem}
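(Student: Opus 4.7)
My plan is to mimic the strategy of the proof of Theorem~\ref{red:state:phase:bound}: extract, by a non-invariance argument, a short chain of arrows in the oriented graph $G_P$ connecting a neighbourhood of $R_0$ to a neighbourhood of $R_2$, and then translate that chain into the target code-function inequality. The bound $2\rho-\tfrac{2}{3}$ is precisely the code-increment across two consecutive black arrows, since $L$ increases by $\rho-\tfrac{1}{3}$ along each such arrow; so the strategy is to locate a two-black chain going from $b_0$ to $b_2$ landing in the right position relative to the inner black neighbours of $R_0$ and $R_2$.

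Setup. Let $\xi_0\in b_0$ and $\xi_2\in b_2$ denote the black points of $P$ sitting immediately closer to $a$ than $i(R_0)$ and $i(R_2)$ respectively; they exist by Theorem~\ref{black:loop:length:3} and the canonical ordering of branches. The strictly-increasing property of the code function (Theorem~\ref{tri-od:rot:twist:order:inv}) furnishes the strict inequalities $L(\xi_0)<L(i(R_0))$ and $L(\xi_2)<L(i(R_2))$ between consecutive points of $P$ on the same branch.

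Main construction. Consider the shell $T=\{t\in P\cap b_0:t\geqslant i(R_0)\}\cup\{t\in P\cap b_2:t\geqslant i(R_2)\}$. Since $P$ is a single $f$-orbit and $T$ is a proper non-empty subset of $P$, the set $T$ cannot be $f$-invariant. Every red point of $T\cap b_0$ maps into $b_2$ and every red point of $T\cap b_2$ maps into $b_1$; tracking the first orbit transition that violates $f(T)\subseteq T$, and combining with Lemma~\ref{black:train} and the order-preserving property (Theorem~\ref{necesary:condition:triod:twist}), produces a two-black chain $z_0\to z_1\to z_2$ with $z_0\in b_0$, $z_1\in b_1$, $z_2\in b_2$, whose endpoints are located close to $\xi_0$ and $\xi_2$. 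Since the total displacement along this chain is $\tfrac{1}{3}+\tfrac{1}{3}=\tfrac{2}{3}$, we obtain the code identity $L(z_2)=L(z_0)+2\rho-\tfrac{2}{3}$.

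Conclusion and main obstacle. Combining this identity with the strict-monotonicity estimates above and the non-decreasingness of $L$ across branches yields $L(i(R_0))-L(i(R_2))<2\rho-\tfrac{2}{3}$, and taking $x=i(R_0)\in R_0$, $y=i(R_2)\in R_2$ completes the argument. The delicate issue is that the two-black chain furnished by non-invariance could a priori land either past $\xi_2$ (inside or beyond $R_2$) or short of it (deep in the inner black region of $b_2$); ruling out both possibilities requires arguing, much as in the proof of Theorem~\ref{red:state:phase:bound}, that any such mis-landing would force a non-trivial proper $f$-invariant subset of $P$ and contradict $P$ being a single orbit. The strict-increase clause of Theorem~\ref{tri-od:rot:twist:order:inv} is precisely what promotes the arithmetic identity $L(z_2)-L(z_0)=2\rho-\tfrac{2}{3}$ to the strict inequality demanded by the statement.
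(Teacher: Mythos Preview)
Your proposal has the right arithmetic target --- two black arrows contribute $2\rho-\tfrac{2}{3}$ to the code --- but the non-invariance argument you set up does not produce the chain you need. Your set $T=\{t\in P\cap b_0:t\geqslant i(R_0)\}\cup\{t\in P\cap b_2:t\geqslant i(R_2)\}$ is \emph{trivially} non-$f$-invariant: every point of $R_2\subset T\cap b_2$ is red and hence maps into $b_1$, which lies entirely outside $T$. So ``the first orbit transition that violates $f(T)\subseteq T$'' is simply a red point of $R_2$ leaving for $b_1$, and this carries no information whatsoever about a two-black chain from $b_0$ to $b_2$. You flag the placement of the chain endpoints as ``the delicate issue'' but never resolve it; as your argument stands, the chain has not even been produced, and the final ``combining\ldots yields'' step is an assertion without support.

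The paper proceeds by a case split you have suppressed. It first tests whether the single red arrow out of $e(R_0)$ already reaches $i(R_2)$: if $f(e(R_0))\geqslant i(R_2)$, that arrow alone gives the sharper bound $\rho-\tfrac{2}{3}$ with $x=e(R_0)$, $y=i(R_2)$. Only in the complementary case $i(R_2)>f(e(R_0))$ does one invoke non-invariance --- of the sets $\{t\in P:i(R_0)\geqslant t\}$ and $\{t\in P:t\geqslant i(R_2)\}$, not of your $T$ --- to extract black points $\xi,\eta$ with $i(R_0)>\xi$, $f(\xi)\geqslant\eta$, $f(\eta)\geqslant i(R_2)$; the strict inequality $L(\xi)<L(i(R_0))$ then closes the estimate. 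Without this dichotomy you cannot guarantee that any two-black chain lands at or beyond $i(R_2)$, which is exactly the gap your last paragraph concedes.
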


\begin{proof}
	Since $\rho >  \tfrac{1}{3}$, the cycle $P$ consists only of \emph{red} and \emph{black points} (by Theorem \ref{bifurcation:one:third}), and the \emph{branches} are \emph{canonically ordered} (Theorem \ref{black:loop:length:3}).  We consider the relative positions of $f(e(R_0))$ and $i(R_2)$ where $e(R_0)$ and $i(R_2)$ denote respectively, the \emph{outer endpoint} (\emph{point farthest from} $a$) of $R_0$ and the \emph{inner endpoint} (\emph{point closest to} $a$) of $R_2$.
	
	\smallskip 
\smallskip 
\smallskip 
\smallskip 

	Case 1: $f(e(R_0)) \geqslant i(R_2)$.  Then, by the recurrence rule for the \emph{code function}, $L(i(R_2)) - L(e(R_0)) = L(i(R_2)) -\big(L(f(e(R_0))) - \rho + \tfrac{2}{3}\big) 	= L(i(R_2)) - L(f(e(R_0))) + \rho - \tfrac{2}{3} \leqslant \rho - \tfrac{2}{3}$.	
	
		\smallskip 
	\smallskip 
	\smallskip 
	\smallskip 

	Case 2: $i(R_2) > f(e(R_0))$.  	Since, the sets $\{t \in P : i(R_0) \geqslant t\}$ and 
	$\{t \in P : t \geqslant i(R_2)\}$ are not invariant under $f$.  Hence, there exist \emph{black points} $\xi, \eta \in P$ such that  $i(R_0) > \xi, 	f(\xi) \geqslant \eta$ and $f(\eta) \geqslant i(R_2)$.  Using the same recurrence relation and the inequalities, we obtain $L(i(R_2)) - L(i(R_0)) \leqslant L(f(\eta)) - L(i(R_0)) =L(\eta) + \rho - \frac{1}{3} - L(i(R_0)) \leqslant L(f(\xi)) + \rho - \frac{1}{3} - L(i(R_0)) = L(\xi) + 2 \rho - \frac{2}{3} - L(i(R_0)) = (2 \rho - \frac{2}{3}) + (L(\xi) - L(i(R_0))) < 2 \rho - \frac{2}{3}$

	\smallskip 
\smallskip 
\smallskip 
\smallskip

	In both cases, the difference of \emph{code values} between the \emph{red states} $R_0$ and $R_2$ is bounded above by $2\rho - \tfrac{2}{3}$.  Therefore, there exist $x \in R_0$ and $y \in R_2$ such that	$L(x) - L(y) < 2\rho - \tfrac{2}{3}$. This completes the proof.
\end{proof}

\begin{theorem}\label{red:adjacent:states}
	Suppose the branch $b_0$ of the triod $Y$ contains two adjacent red states $R$ and $S$ with $R > S$.  Then for every $r \in R$ and $s \in S$, we have,  $	L(r) - L(s) \leqslant 3\rho - 1$, where $\rho$ is the rotation number of the triod–twist cycle $P$.
\end{theorem}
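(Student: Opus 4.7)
The plan is to adapt the argument of Theorem~\ref{red:state:phase:bound}, building an $f$-train in $G_P$ that now spans both $R$ and $S$ together with the intervening black region on $b_0$.

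First, since $\rho>\tfrac13$, Definition~\ref{non:decreasing} makes $L$ non-decreasing along $b_0$ in the direction away from $a$, so for every $r\in R$ and $s\in S$ one has $L(r)-L(s)\leqslant L(e(R))-L(i(S))$. I introduce $\xi$ as a black point of $P\cap b_0$ with $\xi\leqslant i(S)$; such a point exists because the canonical ordering (Theorem~\ref{black:loop:length:3}) forces the innermost point of $b_0\cap P$ to be black, whereas $i(S)$ itself is red by Theorem~\ref{bifurcation:one:third}. Since $L(\xi)\leqslant L(i(S))$, it suffices to show $L(e(R))-L(\xi)\leqslant 3\rho-1$.

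Second, following the dichotomy of Theorem~\ref{red:state:phase:bound}, the non-invariance of the sets $\{t\in P:i(S)\geqslant t\}$ and $\{t\in P:t\geqslant e(R)\}$ under $f$ (neither can be $f$-invariant, as $P$ is a single cycle and they are proper nonempty subsets) yields one of the following two $f$-trains in $G_P$: either (a) there exists a red point $\eta_1\in b_1$ with $f(\xi)\geqslant\eta_1$ and $f(\eta_1)\geqslant e(R)$, or (b) there exist black points $\eta_2\in b_1$ and $\eta_3\in b_2$ with $f(\xi)\geqslant\eta_2$, $f(\eta_2)\geqslant\eta_3$, and $f(\eta_3)\geqslant e(R)$. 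These are the only chains of length at most three that start at a black point on $b_0$ and terminate on $b_0$: Theorem~\ref{bifurcation:one:third} rules out green arrows when $\rho>\tfrac13$, and the displacement calculus then leaves only the color patterns black$+$red (length two) and black$+$black$+$black (length three).

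Third, each arrow $u\to v$ in $G_P$ contributes $L(v)-L(u)\leqslant\rho-d(u\to v)$, namely $\rho-\tfrac13$ for a black arrow and $\rho-\tfrac23$ for a red arrow. Summing along the chain, case (a) yields $L(e(R))-L(\xi)\leqslant(\rho-\tfrac13)+(\rho-\tfrac23)=2\rho-1$, and case (b) yields $L(e(R))-L(\xi)\leqslant 3(\rho-\tfrac13)=3\rho-1$. Both bounds are at most $3\rho-1$, which completes the proof.

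The principal technical obstacle is Step~2. While the color dichotomy and displacement calculus mirror Theorem~\ref{red:state:phase:bound}, the chain here must traverse two red states and the intervening black region rather than encircle a single red state. Justifying that this larger spatial extent does not force a longer chain rests on verifying that the ingredients used in Theorem~\ref{red:state:phase:bound}---order-preservation (Theorem~\ref{necesary:condition:triod:twist}), the canonical ordering with innermost black points (Theorem~\ref{black:loop:length:3}), and the non-invariance of the inner and outer edge sets---still deliver length-at-most-three chains in this broader configuration. An additional subtlety is the choice of $\xi$, which must be black (to trigger the correct color pattern) while still lying weakly below $i(S)$ in $b_0$.
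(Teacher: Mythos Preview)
Your proof is correct and follows essentially the same route as the paper: you pick the black point $\xi$ just below $i(S)$ on $b_0$, invoke the non-invariance of $\{t\in P:i(S)\geqslant t\}$ and $\{t\in P:t\geqslant e(R)\}$ to obtain the same two-case dichotomy (black--red versus black--black--black), and compute the identical bounds $2\rho-1$ and $3\rho-1$. Your explicit colour/displacement analysis of why only these two chain patterns can close up on $b_0$ is a bit more detailed than the paper's bare assertion, but the argument is otherwise the same.
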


\begin{proof}
	Move along the \emph{branch} $b_0$ from the \emph{red state} $S$ toward the \emph{branching point} $a$.  	Let $\xi_1$ denote the \emph{black point} of $P$ lying closest to $i(S)$ (the \emph{inner endpoint} of $S$), towards the \emph{branching point} $a$.  Such a point $\xi_1$ exists because, by Theorem~\ref{black:loop:length:3}, the points of $P$ nearest to the \emph{branch point} on each \emph{branch} are \emph{black}.  	Since the sets 
	$\{t \in P : i(S) \geqslant t\}$ and 	$\{t \in P : t \geqslant e(R)\}$ 	are not invariant under $f$, one of the following two situations must occur.
	
	\smallskip 
\smallskip 
\smallskip 
\smallskip 

Case 1: Suppose there exists a \emph{red point} $\eta_1$ for which  
$f(\xi_1) \geqslant \eta_1$ and $f(\eta_1) \geqslant e(R)$.  In this situation, for any $r \in R$ and $ s \in S$, we obtain $L(r) - L(s) \leq L(e(R)) - L(i(S))  	\leq L(f(\eta_1)) - L(\xi_1) = \{ L(\eta_1) + \rho - \frac{2}{3}\} - \{ L(f(\xi_1)) - \rho + \frac{1}{3}\} = \bigl(L(\eta_1) - L(f(\xi_1))\bigr) + 2\rho - 1 \leqslant 2\rho - 1 $.

		\smallskip 
	\smallskip 
	\smallskip 
	\smallskip 
	
	Case 2: There exist \emph{black points} $\xi_2$ and  $\xi_3 \in P$ such that  
	$f(\xi_1) \geqslant \xi_2$, $f(\xi_2) \geqslant \xi_3$, and $f(\xi_3) \geqslant e(R)$.  	Then, for any $r \in R$ and $ s \in S$, we obtain,  $L(r) - L(s) \leqslant L(e(R)) - L(i(S))  \leqslant L(f(\xi_3)) - L(\xi_1)  = L\bigl(\xi_3\bigr) - L(\xi_1) + \rho - \tfrac{1}{3} \leqslant L(f(\xi_2)) + \rho - \tfrac{1}{3} - L(\xi_1) = L(\xi_2) + 2\rho - \tfrac{2}{3} - L(\xi_1) \leqslant L(\xi_2) + 3 \rho -1 - L(f(\xi_1)) \leqslant 3\rho - 1$. 
	
		\smallskip 
	\smallskip 
	\smallskip 
	\smallskip 

	In both cases, the variation of the \emph{code function} between the two adjacent \emph{red states} $R$ and $S$ is bounded by $3\rho - 1$.  
	Hence, there exist $r \in R$ and $s \in S$ satisfying $L(r) - L(s) \leqslant 3\rho - 1$. This completes the proof.
\end{proof}

\begin{theorem}\label{movement:red:black}
	Let $b \in P$ be any black point. Then there exists a red point $r \in P$ such that $	L(b) - L(r) \leqslant 4 \rho - \tfrac{5}{3}$, where $\rho$ denotes the rotation number of the triod–twist cycle $P$.
\end{theorem}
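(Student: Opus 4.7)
The plan is to mirror the case analysis of Theorem~\ref{black:green:movement}, but to process the backward orbit of $b\in P$ (using that $f|_P$ is a bijection) rather than its forward orbit; this is natural because, for $\rho>\tfrac13$, a black preimage decreases the code by $\rho-\tfrac13$ while a red preimage increases it by $\tfrac23-\rho$, so walking backward is what brings us from a black point toward a red point of \emph{larger} code. By Theorem~\ref{bifurcation:one:third} the set $R(P)$ is nonempty, so the chain $b,\,b_{-1},\,b_{-2},\dots$ with $b_{-j}:=f^{-j}(b)$ must eventually land on a red point. Let $k\geqslant 1$ be the smallest index for which $b_{-k}$ is red, so that $b_{-1},\dots,b_{-(k-1)}$ are all black. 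Applying the step recurrence $L(f(x))-L(x)=\rho-d(x,f(x))$ through $k-1$ black preimages and one red preimage gives
\[
L(b)-L(b_{-k}) \;=\; k\rho-\tfrac{k+1}{3}.
\]
Rearranging the desired inequality $k\rho-\tfrac{k+1}{3}\leqslant 4\rho-\tfrac{5}{3}$ to $(k-4)(\rho-\tfrac13)\leqslant 0$ and using $\rho>\tfrac13$ shows that it holds precisely when $k\leqslant 4$, with equality at $k=4$. Thus in the ``short-chain'' case $k\in\{1,2,3,4\}$ one may simply take $r:=b_{-k}$.

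The substantive difficulty is the ``long-chain'' case $k\geqslant 5$, when $b_{-1},\dots,b_{-4}$ are all black. In that regime the four preimages distribute cyclically over the three branches, so $b$ and $b_{-3}$ share one branch while $b_{-1}$ and $b_{-4}$ share another, and order preservation (Theorem~\ref{necesary:condition:triod:twist}) together with the canonical branch ordering (Theorem~\ref{black:loop:length:3}) imposes strong nested relations among them. I would then locate a red state~$R$ positioned on an appropriately chosen branch whose outer endpoint sits close (in the triod order) to one of the $b_{-j}$, and splice a short backward walk with a sideways hop into~$R$, bounding the total code change by the structural results of this section: Theorem~\ref{red:state:phase:bound} ($\chi(R)\leqslant 3\rho-1$ within a single red state), Theorem~\ref{movement:b0:b2} (drop less than $2\rho-\tfrac23$ between the innermost reds on $b_0$ and $b_2$), and Theorem~\ref{red:adjacent:states} (drop at most $3\rho-1$ between adjacent red states on a single branch). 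In the extreme subcase where no such~$R$ is accessible, the argument would close by invariance: the set $\{p\in P:L(p)\geqslant L(b)+\tfrac{5}{3}-4\rho\}$ would then be $f$-invariant and contain no red, contradicting both the cyclicity of~$P$ and $R(P)\neq\emptyset$.

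The main obstacle is this case split. Because $4\rho-\tfrac{5}{3}$ changes sign at $\rho=\tfrac{5}{12}$, no single uniform estimate works: each configuration (determined by which of the three branches carries red states and by the relative positions of their innermost red states with respect to the iterates of~$b$) contributes a different linear combination of $\rho-\tfrac13,\ \rho-\tfrac23,\ 2\rho-\tfrac23,\ 3\rho-1$, and each such combination must be shown to collapse to $\leqslant 4\rho-\tfrac{5}{3}$. The most delicate subcases are expected to be the ``monochromatic'' ones in which only a single branch carries any red states.
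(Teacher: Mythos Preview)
Your short-chain case ($k\leqslant 4$) is correct and tidy: the identity $L(b)-L(b_{-k})=k\rho-\tfrac{k+1}{3}$ is right, and the rearrangement $(k-4)(\rho-\tfrac13)\leqslant 0$ gives the bound exactly when $k\leqslant 4$.

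The gap is the long-chain case $k\geqslant 5$. Nothing you have written there is a proof: you propose to ``locate a red state $R$ on an appropriately chosen branch'' and to ``splice a short backward walk with a sideways hop,'' but you do not specify which branch, which state, or which hop, nor do you verify any of the resulting inequalities. The final fallback --- that $\{p\in P: L(p)\geqslant L(b)+\tfrac{5}{3}-4\rho\}$ would be $f$-invariant --- is false as stated: code increases by $\rho-\tfrac13>0$ on black points but by $\rho-\tfrac23<0$ on red ones (Theorem~\ref{rho:less:half}), so upper level sets of $L$ are not in general invariant, and invariance here has no obvious relation to the nonexistence of a convenient red state. Since the run of consecutive black predecessors can be arbitrarily long (for $\rho$ near $\tfrac13$ the ratio $|B(P)|/|R(P)|$ is large), the long-chain case is not a fringe scenario and must be handled.

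The paper avoids this entirely by abandoning backward iteration. It fixes \emph{any} red point $r$ (say $r\in b_0$), then case-splits on the branch containing $b$. In each case the non-invariance of the order-cones $\{t\in P: \alpha\geqslant t\}$ and $\{t\in P: t\geqslant \beta\}$ furnishes arrows in the oriented graph $G_P$ --- not iterates of $f$ on $P$ --- producing a chain from (a point $\leqslant r$) to (a point $\geqslant b$) of length at most four, with explicit colour pattern. Because these are $G_P$-arrows (i.e.\ $x\to y$ means $\exists\,z\leqslant x$ with $f(z)\geqslant y$), each step contributes at most $\rho-\tfrac13$ or $\rho-\tfrac23$ to $L(b)-L(r)$ regardless of how far back one would have to walk along the orbit. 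The worst configuration (three black arrows after one red step, Subcase~3.1) gives $4\rho-\tfrac53$. This is the missing idea: replace ``backward orbit until first red'' by ``arbitrary red plus short $G_P$-chain,'' so that the number of subcases is bounded independently of the cycle.
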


\begin{proof}
	Since $\rho > \tfrac{1}{3}$, the cycle $P$ contains at least one \emph{red} point.  
	Let $r \in P$ be a \emph{red} point, and assume without loss of generality that $r \in b_0$, where $b_0, b_1, b_2$ denote the \emph{branches} of $Y$ in \emph{canonical order}.  
	We consider three possible positions of the \emph{black point} $b$.

		\smallskip 
	\smallskip 
	\smallskip 
	\smallskip 

	Case 1: $b \in b_0$.  	If $r \geqslant b$, then $L(b) - L(r) \leqslant 0$.  	Assume instead that $b>r$.  
	Since the collections $\{t \in P : r > t\}$ and $\{t \in P : t \geqslant b\}$ are not invariant under $f$, there exist a  \emph{black point} $\xi$ and a \emph{red point}  $\eta \in P$ such that $ r > \xi$, $f(\xi) \geqslant \eta$ and $f(\eta) \geqslant b$.  	$L(b) - L(r) < L(f(\eta)) - L(\xi)  = L(f(\eta)) -\{ L(f(\xi)) - \rho + \frac{1}{3} \} = L(f(\eta)) - L(f(\xi)) + \rho - \frac{1}{3} = L(\eta) + \rho - \frac{2}{3} - L(f(\xi)) + \rho - \frac{1}{3} 	\leqslant 2\rho - 1$.

	\smallskip 
		\smallskip 
			\smallskip 
				\smallskip 
	Case 2: $b \in b_1$.  	Since the sets $\{t \in P : f(r) \geqslant t\}$ and $\{t \in P : t \geqslant b\}$ are not invariant under $f$, one of the following two subcases must occur.

	\emph{Subcase 2.1:} There exist \emph{black points} $\xi, \eta \in P$ such that 
	$f(r) \geqslant \xi$, $f(\xi) \geqslant \eta$, and $f(\eta) \geqslant b$.  
	Then, $L(b) - L(r) \leqslant L\bigl(f(\eta)\bigr) - \bigl( L(f(r)) - \rho + \tfrac{2}{3} \bigr) = L\bigl(f(\eta)\bigr) - L(f(r)) + \rho - \tfrac{2}{3} = L(\eta) + \rho - \tfrac{1}{3} - L(f(r)) + \rho - \tfrac{2}{3} = L\bigl(\eta\bigr) - L(f(r)) + 2\rho - 1 \leqslant L\bigl(f(\xi)\bigr) - L(f(r)) + 2\rho - 1 = L(\xi) + \rho - \tfrac{1}{3} - L(f(r))  + 2\rho - 1 \leqslant 3\rho - \tfrac{4}{3}$.

	\emph{Subcase 2.2:} There exists a \emph{red point} $\xi \in P$ such that 
	$f(r) \geqslant \xi$ and $f(\xi) \geqslant b$.  In this situation, we have $L(b) - L(r) \leqslant L\bigl(f(\xi)\bigr) - \bigl( L(f(r)) - \rho + \tfrac{2}{3} \bigr) = L(\xi) + \rho - \tfrac{2}{3} - L(f(r)) + \rho - \tfrac{2}{3} \leqslant 2\rho - \tfrac{4}{3}$.

\smallskip 
\smallskip 
\smallskip 
\smallskip 

Case 3: $b \in b_2$. If $f(r) \geqslant b$, then $L(b) - L(r) = L(b) - L(f(r)) + \rho - \frac{2}{3} = L(b) - L(f(r)) + \rho - \frac{2}{3} \leqslant \rho - \frac{2}{3}$.  Now, let $b \geqslant f(r)$. Since, the sets $\{ t \in P: t \geqslant b\}$ and $\{ t \in P:  f(r) \geqslant t\}$ are not invariant under $f$, we have three sub-cases:

 \emph{Sub-case 3.1:} There exists \emph{black points} $\xi_1, \xi_2$ and $\xi_3$ such that $f(r) \geqslant \xi_1, f(\xi_1) \geqslant \xi_2$, $f(\xi_2) \geqslant  \xi_3$ and $f(\xi_3) \geqslant b$. Then, $L(b) - L(r) = L(b) - \{ L(f(r)) - \rho + \frac{2}{3}\} = L(b) - L(f(r)) + \rho - \frac{2}{3} \leqslant L(b) - L(\xi_1) + \rho -\frac{2}{3} = L(b) - L(f(\xi_1)) + 2 \rho -1 \leqslant  L(b ) - L(\xi_2) + 2 \rho -1 = L(b) - L(f(\xi_2)) + 3 \rho - \frac{4}{3} \leqslant L(b ) - L(\xi_3) + 3 \rho - \frac{4}{3} = L(b) - L(f(\xi_3)) + 4 \rho - \frac{5}{3} \leqslant 4 \rho - \frac{5}{3}$

 \emph{Sub-case 3.2} There exists a \emph{black point} $\xi_1$ and a \emph{red point} $\eta$ such that $f(r) \geqslant \xi_1$, $r \geqslant f(\xi_1)$, $f(\xi_1) \geqslant \eta$ and $f(\eta) \geqslant b$. Then $L(b) - L(r) = L(b) - \{ L(f(r)) - \rho + \frac{2}{3}\} = L(b) - L(f(r)) + \rho - \frac{2}{3} \leqslant L(b) - L(\xi_1) + \rho - \frac{2}{3} = L(b) - \{ L(f(\xi_1)) - \rho + \frac{1}{3} \}  + \rho - \frac{2}{3} \leqslant L(b) - L(\eta) + 2 \rho -1 = L(b) - L(f(\eta)) + 3 \rho - \frac{5}{3} \leqslant 3 \rho - \frac{5}{3}$
 
 \emph{Sub-case 3.3} There exists a \emph{red point} $\eta$ and  a \emph{black point} $\xi_2$ such that $f(r) \geqslant \eta$, $f(\eta) \geqslant \xi_2$ and $f(\xi_2) \geqslant  b$. Now, $L(b) - L(r) = L(b) - L(f(r)) + \rho - \frac{2}{3} \leqslant L(b) - L(\eta) + \rho - \frac{2}{3} = L(b) - L(f(\eta)) + 2 \rho - \frac{4}{3} \leqslant L(b) - L(f(\xi_2)) + 3 \rho - \frac{5}{3} \leqslant 3 \rho - \frac{5}{3}$

	\smallskip
	Among all possible cases, the maximal bound is attained in Subcase 3.1, giving  
	$L(b) - L(r) \leqslant 4\rho - \tfrac{5}{3}$.  	This completes the proof.
\end{proof}

\begin{theorem}\label{movement:black:red}
	Let $b \in P$ be any black point. Then there exists a red point $r \in P$ such that 
	$	L(r) - L(b) \leqslant 5\rho - \tfrac{5}{3}$,	where $\rho$ is the rotation number of the triod–twist cycle $P$.
\end{theorem}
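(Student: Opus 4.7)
The plan is to carry out a subcase analysis paralleling that of Theorem~\ref{movement:red:black}, but with the roles of $b$ and $r$ (and the direction of the code inequality) reversed. Fix an arbitrary black point $b \in P$; by Theorem~\ref{bifurcation:one:third}, a red point of $P$ exists since $\rho > \tfrac13$, and the goal is to locate one whose code value is only slightly above that of $b$.

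I would split into three cases based on which branch (in the canonical ordering) contains $b$, and in each case attempt to locate a red $r$ on a suitable branch. If some red $r$ satisfies $b \geqslant r$ on a common branch, the bound is immediate because $L(r) \leqslant L(b) \leqslant L(b) + (5\rho - \tfrac53)$ (recall $\rho > \tfrac13$). Otherwise, I would use the non-invariance under $f$ of sets of the form $\{t \in P : b \geqslant t\}$ and $\{t \in P : t \geqslant r\}$ (together with the iterates $\{t \in P : f^j(r) \geqslant t\}$ for $j \in \{1,2\}$) and the absence of green points when $\rho > \tfrac13$, to construct a chain $y_0, y_1, \dots, y_k$ in $P$ satisfying $b \geqslant y_0$, $f(y_i) \geqslant y_{i+1}$ for each $i$, and $y_k \geqslant r$. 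Telescoping via the single-step identities
$$
L(f(x)) - L(x) \;=\; \rho - \tfrac13 \quad (x\text{ black}), \qquad L(f(x)) - L(x) \;=\; \rho - \tfrac23 \quad (x\text{ red}),
$$
together with the non-decreasing property of the code on each branch (Definition~\ref{non:decreasing} and Theorem~\ref{tri-od:rot:twist:order:inv}), yields $L(r) - L(b) \leqslant k\rho - \sum_{i=0}^{k-1} d(y_i \to f(y_i))$.

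The worst configuration arises when $b$ lies two branches ahead of the nearest available red point in the forward cyclic direction, forcing a chain of length $k=5$ whose five arrows are all black; the total displacement is $\tfrac{5}{3}$ and the bound becomes exactly $5\rho - \tfrac53$. Every other subcase produces a bound of the form $k\rho - s$ with $k \leqslant 5$ and $s \geqslant k/3$, which is dominated by $5\rho - \tfrac53$ because $\rho > \tfrac13$ (and $\rho < \tfrac12$ by Theorem~\ref{rho:less:half} keeps the expressions meaningful). The main obstacle will be the careful enumeration of subcases—one for each pair (branch of $b$, branch of the chosen red point), and within each pair a further split according to how the non-invariance arguments distribute black and red pivots along the chain. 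This mirrors the Subcase~3.1/3.2/3.3 structure of Theorem~\ref{movement:red:black}, but in the worst arrangement the chain is one step longer, producing the bound $5\rho - \tfrac53$ in place of the previous $4\rho - \tfrac53$.
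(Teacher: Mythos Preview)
Your plan is correct and is essentially the paper's own argument: fix one of $b,r$, split into three cases according to the relative branch of the other, build an $f$-train from $b$ toward $r$ via the non-invariance of the order-intervals $\{t:b\geqslant t\}$ and $\{t:t\geqslant r\}$, and telescope the single-step code increments exactly as you wrote. One small correction: the longest chain (five black arrows, bound $5\rho-\tfrac53$) arises when $r$ is two black-steps \emph{ahead} of $b$ (the paper's Subcase~2.1, with $r\in b_0$ and $b\in b_1$), not when $b$ is two ahead of $r$; and in the same-branch case you will also need to invoke regularity of $P$ to rule out a length-two chain through a red pivot, since that configuration would force a primitive $2$-cycle.
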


\begin{proof}
	Since $\rho >  \tfrac{1}{3}$, the cycle $P$ must contain at least one \emph{red} point.  
	Choose any \emph{red point} $r \in P$, and assume without loss of generality that $r \in b_0$, where $b_0, b_1, b_2$ denote the \emph{branches} of $Y$ in canonical order.  
	We now determine the desired bound by examining the possible positions of the \emph{black point} $b$.
	
	\smallskip 
\smallskip 
\smallskip 

Case 1: $b \in b_0$.  
If $b \geqslant r$, then clearly $L(r) - L(b) \leqslant 0$, so we may assume that $r \geqslant b$. Observe that the sets $\{t \in P : t \geqslant r\}$ and $\{t \in P : f(b) \geqslant t\}$ are not invariant under $f$. Consequently, one of the following sub-cases must occur: either there exists a \emph{red point} $\eta \in P$ such that $f(b) \geqslant \eta$ and $f(\eta) \geqslant r$, or there exist \emph{black points} $\xi_1, \xi_2 \in P$ satisfying $f(b) \geqslant \xi_1$, $f(\xi_1) \geqslant \xi_2$, and $f(\xi_2) \geqslant r$.  Since $P$ is \emph{regular}, it cannot \emph{force} a \emph{primitive cycle} of period $2$, and hence the first sub-case is ruled out. Moreover, for the same reason, in the second sub-case we must necessarily have $f(\xi_1) \geqslant \xi_2 > f(r)$.  By the definition of the \emph{code function}, we obtain,  $L(r) - L(b) \leqslant L(f(\xi_2))  - L(b) + \rho - \frac{1}{3} \leqslant  L(\xi_2) -  L(\xi_1) +2 \rho - \tfrac{2}{3} \leqslant  L(\xi_2) -  L(f(\xi_1)) +3\rho - 1 \leqslant 3 \rho -1$. 

	\smallskip 
	\smallskip 
	\smallskip

	Case 2: $b \in b_1$.  	If $f(b) \geqslant f(r)$, then $L(r) - L(b) = L(f(r)) - \rho + \tfrac{2}{3} - \bigl( L(f(b)) - \rho + \tfrac{1}{3} \bigr)	< \tfrac{1}{3}$.  Hence assume $f(r) \geqslant f(b)$.  	Since the sets $\{t \in P : b \geqslant t\}$ and $\{t \in P : t \geqslant r\}$ are not invariant under $f$, one of the following subcases must occur.

	\emph{Subcase 2.1:} There exist \emph{black points} $\xi_1, \xi_2, \xi_3, \xi_4 \in P$ with  
	$f(b) \geqslant \xi_1$, $f(\xi_1) \geqslant \xi_2$, $f(\xi_2) \geqslant \xi_3$, $f(\xi_3) \geqslant \xi_4$, and $f(\xi_4) \geqslant r$.  
	Then, $L(r) - L(b) \leqslant L(r) - \bigl( L(f(b)) - \rho + \tfrac{1}{3} \bigr) \leqslant L(r) - L(\xi_1) + \rho - \tfrac{1}{3} = L(r) - L(f(\xi_1)) + 2\rho - \tfrac{2}{3} \leqslant L(r) - L(f(\xi_2)) + 3\rho - 1 \leqslant L(r) - L(f(\xi_3)) + 4\rho - \tfrac{4}{3} \leqslant L(r) - L(f(\xi_4)) + 5\rho - \tfrac{5}{3} \leqslant 5\rho - \tfrac{5}{3}$.

	\emph{Subcase 2.2:} There exists a \emph{red point} $\xi$ with  
	$b \geqslant \xi$ and $f(\xi) \geqslant r$.  Then	$L(r) - L(b)	\leqslant L(r) - L(\xi) 	= L(r) - \bigl( L(f(\xi)) - \rho + \tfrac{2}{3} \bigr) 	\leqslant \rho - \tfrac{2}{3}$. 
	
	\smallskip 
\smallskip 
\smallskip

	Case 3: $b \in b_2$.  	If $b \geqslant f(r)$, then $L(f(r)) - L(b) \leqslant 0$, which yields, $L(r) + \rho - \frac{2}{3} - L(b) \leqslant 0$ and hence, $L(r) - L(b) \leqslant \frac{2}{3} - \rho $.  So assume $f(r) > b$.  Also, since , $P$ is \emph{regular}, we have $ r > f(b)$. Since neither $\{t \in P : b > t\}$ nor $\{t \in P : t > r\}$ is invariant, one of the following holds.
	
		\emph{Subcase 3.1:} There exist \emph{black points} $\xi_1, \xi_2, \xi_3 \in P$ with  $f(b) \geqslant \xi_1$, 
	$f(\xi_1) \geqslant \xi_2$, $f(\xi_2) \geqslant  \xi_3$, and $f(\xi_3)\geqslant r$ (See Figure \ref{drawing r10}). 	Then $	L(r) - L(b) \leqslant L(r) - \bigl( L(f(b)) - \rho + \tfrac{1}{3} \bigr) \le L(r) - L(\xi_1) + \rho - \tfrac{1}{3} = L(r) - L(f(\xi_1)) + 2\rho - \tfrac{2}{3} \le L(r) - L(f(\xi_2)) + 3\rho - 1 \leqslant L(r) - L(f(\xi_3)) + 4\rho - \tfrac{4}{3} \leqslant 4\rho - \tfrac{4}{3}$.

\begin{figure}[H]
	\centering
	\includegraphics[width=0.4\textwidth]{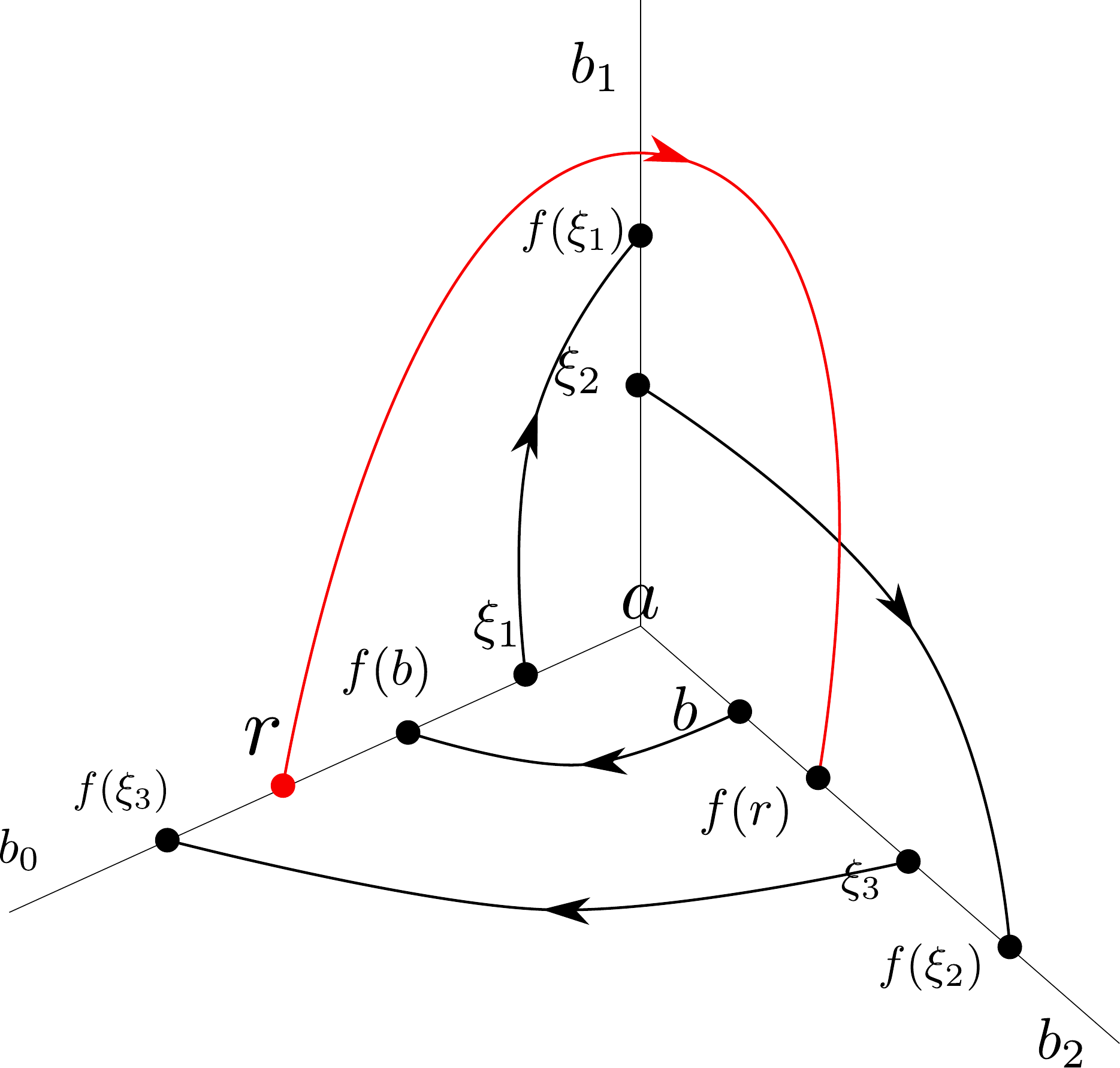}
	\caption{}
	\label{drawing r10}
\end{figure}

	\emph{Subcase 3.2:} There exist a \emph{black point} $\xi$ and a \emph{red point} $\eta$ with  
$f(b) \geqslant \xi$, $f(\xi) \geqslant  \eta$, and $f(\eta)\geqslant r$.  Then $L(r) - L(b)= L(r) - \bigl( L(f(b)) - \rho + \tfrac{1}{3} \bigr) 	\leqslant L(r) - L(\xi) + \rho - \tfrac{1}{3} \leqslant L(r) - L(\eta) + 2\rho - \tfrac{2}{3} =  L(r) - L(f(\eta)) + 3 \rho - \frac{4}{3} \leqslant 3\rho - \tfrac{4}{3}$.

	\smallskip 
	\smallskip 
	\smallskip

	Among all cases, the largest bound arises in Subcase~2.1.  Thus, for every \emph{black point} $b \in P$, there is a \emph{red point} $r \in P$ with	$L(r) - L(b) \leqslant 5\rho - \tfrac{5}{3}$. This completes the proof.
\end{proof}

\begin{theorem}\label{chi:bound:red}
	Let $P$ be a triod–twist cycle of modality $m$ and rotation number $\rho > \tfrac{1}{3}$.  
	Then the total oscillation of the code function on $P$ satisfies	$\chi(P) <  m + 3$. 
\end{theorem}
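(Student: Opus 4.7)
The strategy is to mimic the proof of Theorem~\ref{thm:chi-bound-final} from Case~I: first reduce the bound on $\chi(P)$ to a bound on $\chi(R(P))$, the oscillation of the \emph{code function} restricted to the \emph{red points}, then bound $\chi(R(P))$ using the structural constraints on \emph{red states}, and finally combine everything with Lemma~\ref{first:lemma} and the inequality $\rho < \tfrac{1}{2}$ from Theorem~\ref{rho:less:half}.

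For the first step, I would invoke Theorems~\ref{movement:red:black} and~\ref{movement:black:red}, which guarantee that every \emph{black point} $b\in P$ has a \emph{red} neighbour in \emph{code} on each side: there exist red points $r_1,r_2\in P$ with $L(b) - L(r_1) \leqslant 4\rho - \tfrac{5}{3}$ and $L(r_2) - L(b) \leqslant 5\rho - \tfrac{5}{3}$. Adding these two estimates to the oscillation over red points gives $\chi(P) \leqslant \chi(R(P)) + 9\rho - \tfrac{10}{3}$. Using $\rho < \tfrac{1}{2}$, the additive correction is less than $\tfrac{7}{6}$, so it suffices to bound $\chi(R(P))$.

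For the second step, I would decompose the red points branch by branch. Let $k_i$ be the number of \emph{red states} in branch $b_i$ and set $k=k_0+k_1+k_2$; by Lemma~\ref{first:lemma}, $k < \tfrac{m}{2}+2$. On each branch $b_i$ the \emph{code function} is strictly increasing as one moves away from $a$ (since $\rho>\tfrac{1}{3}$), so its oscillation on the red points of $b_i$ splits into at most $k_i$ intra-state oscillations, each bounded by $3\rho - 1$ via Theorem~\ref{red:state:phase:bound}, together with at most $k_i-1$ jumps between \emph{adjacent red states}, each bounded by $3\rho - 1$ via Theorem~\ref{red:adjacent:states}. This gives at most $(2k_i - 1)(3\rho-1)$ on $b_i$. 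To link the three branches, I would apply Theorem~\ref{movement:b0:b2}, together with its cyclic analogs obtained by relabelling the \emph{canonical ordering}, to bound the \emph{code} difference between the innermost red states of distinct branches by at most $2\rho-\tfrac{2}{3}$ per crossing; at most two such cross-branch transitions are needed to chain all red points.

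Assembling these bounds and using $3\rho-1<\tfrac{1}{2}$ and $2\rho-\tfrac{2}{3}<\tfrac{1}{3}$, one gets $\chi(R(P)) < (2k-3)\cdot\tfrac{1}{2} + 2\cdot\tfrac{1}{3} = k - \tfrac{5}{6}$, and consequently $\chi(P) < k - \tfrac{5}{6} + \tfrac{7}{6} = k + \tfrac{1}{3} < \tfrac{m}{2} + \tfrac{7}{3} < m+3$. The main obstacle will be the cross-branch step: Theorem~\ref{movement:b0:b2} only asserts the \emph{existence} of witnesses $x\in R_0$, $y\in R_2$ with $L(x)-L(y)<2\rho-\tfrac{2}{3}$, rather than a universal bound, so converting this into an estimate that composes cleanly with the intra-branch chains will require carefully tracking how each witness sits inside its red state and lining it up with the endpoints already used on either side. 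A secondary technicality is the degenerate case in which one or two branches contain no red state at all; here the cross-branch step either simplifies (fewer transitions needed) or is handled separately by directly exploiting the movement theorems from Step~1.
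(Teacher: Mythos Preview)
Your three-step plan---reduce $\chi(P)$ to $\chi(R(P))$ via Theorems~\ref{movement:red:black} and~\ref{movement:black:red}, bound $\chi(R(P))$ through the red-state structure, then invoke Lemma~\ref{first:lemma} and $\rho<\tfrac12$---is exactly the paper's argument, and your arithmetic goes through. Where you decompose branch-by-branch and worry about aligning the cross-branch witnesses from Theorem~\ref{movement:b0:b2}, the paper instead chains all $\ell$ red states in one sequence using a single loose transition bound of $5\rho-\tfrac53$ (which absorbs both the $3\rho-1$ of Theorem~\ref{red:adjacent:states} and the $2\rho-\tfrac23$ of Theorem~\ref{movement:b0:b2}), arriving at $\chi(P)\leqslant 8\ell(\rho-\tfrac13)<\tfrac{4}{3}\ell<m+3$; this coarser packaging dissolves the witness-alignment issue you flagged at the cost of slightly weaker intermediate constants.
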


\begin{proof}
	According to Theorems~\ref{movement:black:red} and \ref{movement:red:black}, for every \emph{black point} $b \in P$, there exists \emph{red points} $r_1, r_2 \in P$ such that both 	$L(b) - L(r_1) \leqslant 4\rho - \tfrac{5}{3} \quad \text{and} \quad 	L(r_2) - L(b) \leqslant 5\rho - \tfrac{5}{3}$. 
	Hence, each \emph{black point} lies within $5\rho - \tfrac{5}{3}$ (in \emph{code}) of some \emph{red point}, both above and below.  	Consequently, the total \emph{oscillation} of the \emph{code function} satisfies $\chi(P) \leqslant  \chi(R(P)) + 5\rho - \tfrac{5}{3}$, where $R(P)$ denotes the set of all \emph{red points} of $P$.
	
	Let $\mathcal{R}(P) = \{S_1, S_2, \ldots, S_\ell\}$ be the collection of all \emph{red states} of $P$.  
	By Theorem~\ref{red:state:phase:bound}, each \emph{red state} $S_i$ satisfies $\chi(S_i) \leqslant  3\rho - 1$. Moreover, by Theorems~\ref{movement:b0:b2} and \ref{red:adjacent:states}, in the transitions between consecutive \emph{red states}, the variation in \emph{code} is bounded by $5\rho - \tfrac{5}{3}$.  	Since at most $\ell - 1$ such inter–state transitions can occur, it follows that $\chi(R(P)) \leqslant \sum_{i=1}^{\ell} \chi(S_i) + (\ell - 1)\Big(5\rho - \tfrac{5}{3}\Big) \leqslant  \ell(3\rho - 1) + (\ell - 1)\Big(5\rho - \tfrac{5}{3}\Big) = (8\ell - 5)\rho + \tfrac{-8\ell + 5}{3}$. 
	
	Substituting this estimate into the previous inequality gives $	\chi(P) \leqslant (8\ell - 5)\rho + \tfrac{-8\ell + 5}{3} + (5\rho - \tfrac{5}{3}) 	= 8\ell\rho - \tfrac{8\ell}{3} = 8 \ell (\rho - \frac{1}{3}).$ 	Since,  $\rho \leqslant \tfrac{1}{2}$ by Theorem~\ref{rho:less:half}, we have $\chi(P) \leqslant 8 \ell (\frac{1}{2} - \frac{1}{3})
	\leqslant \tfrac{4}{3}\ell$ 
	
	By Lemma \ref{first:lemma},  the number of \emph{red states} $\ell$ is at most $\frac{m}{2} + 2$, it follows that $\chi(P) < \tfrac{4}{3}(\frac{m}{2} + 2) <  \frac{2m}{3}+ \frac{8}{3} <  m+3$. Thus, the desired bound $\chi(P) < m + 3$ holds for all \emph{triod–twist} cycles with $\rho > \tfrac{1}{3}$.
\end{proof}

\subsection*{3.3. Conjugacy with Circle Rotations}

We now establish the connection between \emph{triod–twist cycles} and
periodic orbits of circle rotations. Let $P$ be a \emph{triod–twist} cycle with \emph{rotation number} $\rho=\tfrac pq$, where $p$ and $q$ are co-prime, and let $f$ be its associated $P$-\emph{linear} map.
Our aim is to construct a conjugacy between the dynamics of $f$ on $P$
and the \emph{rotation} $R_\rho\colon S^1\to S^1$, $R_\rho(x)=x+\rho\pmod1$, restricted on one of its cycles. 

Cutting the unit circle at one point and identifying it with $[0,1)$,
we obtain the map $g(x)=x+\tfrac pq\pmod1,  x\in[0,1)$.  Let $Q$ be the orbit of $0$ under $g$. Then $P$ and $Q$ are both $q$-cycles, and we can define a bijection $\psi:P\to Q$ preserving the cyclic order induced by $f$ and $g$, as follows.  To make $\psi$ explicit, define the \emph{code function} $L:P\to\mathbb R$, normalize it so that $L(b)=0$ for the point $b\in P$ with minimal code,
and set $\psi(b)=0,\qquad \psi(f(x))=g(\psi(x))\ \text{for all }x\in P$. If $\psi(x)=L(x)\pmod1$, then
$L(f(x))=L(x)+\tfrac pq \equiv\psi(x)+\tfrac pq \equiv g(\psi(x)) =\psi(f(x))\pmod1$,
and since $L(b)=\psi(b)=0$, induction gives $\psi(z)\equiv L(z)\pmod1$ for every $z\in P$. Thus $\psi$ conjugates $f|_P$ with $g|_Q$. Thus, we obtain the following result. 

\begin{theorem}\label{thm:triod-twist-rotation}
	Let $P$ be a triod–twist cycle of modality $m$ and rotation number
	$\rho=\tfrac pq$ with $g.c.d (p,q)=1$. Let $g: S^1 \to S^1$ be the map defined by $g(x)=x+\tfrac pq\pmod1, x\in[0,1)$ and let $Q$ be the orbit of $0$ under $g$. Then the conjugacy $\psi:P\to Q$ between $P$ and $Q$ is piece-wise monotone with at most $m+3$ laps. 

\end{theorem}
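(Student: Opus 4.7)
The plan is to combine the explicit formula $\psi(x)\equiv L(x)\pmod 1$ established in the paragraphs immediately preceding Theorem~\ref{thm:triod-twist-rotation} with the oscillation bound $\chi(P)<m+3$ obtained in Theorems~\ref{thm:chi-bound-final} and~\ref{chi:bound:red}.

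First, I would invoke Theorem~\ref{tri-od:rot:twist:order:inv} to note that the code function $L\colon P\to\mathbb{R}$ is strictly increasing. Since $\psi$ is a bijection and the $\psi$-values are the residues of $L$-values modulo $1$, the $L$-values themselves are pairwise distinct. Using the normalization $L(b)=0$ at the minimum-code point $b$ fixed in the construction of $\psi$, we obtain $L_{\min}=0$ and $L_{\max}=\chi(P)$. Listing $P=\{x_1,\dots,x_q\}$ so that $0=L(x_1)<L(x_2)<\cdots<L(x_q)=\chi(P)$ provides the linear order of the domain against which the piecewise monotonicity of $\psi$ is to be measured.

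Next, I would decompose the index set $\{1,\dots,q\}$ into maximal runs on which the integer part $\lfloor L(x_i)\rfloor$ is constant. Within any such run, $L$ is confined to a single half-open unit interval $[k,k+1)$, so $\psi(x_i)=L(x_i)-k$ is a rigid translation of $L$ and is therefore strictly increasing. Consequently $\psi$ is monotone on each of these runs, and the total number of laps of $\psi$ is bounded above by the number $N$ of distinct integer parts of $L$ attained on $P$.

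Finally, I would estimate $N$. Because $L$ takes values in $[0,\chi(P)]$, the integer parts form a subset of $\{0,1,\dots,\lfloor\chi(P)\rfloor\}$, so $N\le\lfloor\chi(P)\rfloor+1$. Applying the bound $\chi(P)<m+3$ from Theorems~\ref{thm:chi-bound-final} and~\ref{chi:bound:red} gives $\lfloor\chi(P)\rfloor\le m+2$, hence $N\le m+3$, which is the required bound on the modality of $\psi$. Almost all of the substantive work has already been done in Theorems~\ref{thm:chi-bound-final} and~\ref{chi:bound:red}; the present step is a purely combinatorial bookkeeping that converts the bound on the spread of the code function into a modality bound for the modular conjugacy, and I foresee no serious technical obstacle in carrying it out.
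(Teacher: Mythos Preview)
Your proposal follows essentially the same route as the paper's proof: both invoke the oscillation bound $\chi(P)<m+3$ from Theorems~\ref{thm:chi-bound-final} and~\ref{chi:bound:red}, appeal to Theorem~\ref{tri-od:rot:twist:order:inv} for the strict monotonicity of $L$, and conclude that $\psi\equiv L\bmod 1$ has at most $m+3$ monotone pieces because the range of $L$ meets at most $m+3$ unit intervals. Your explicit enumeration of integer-part runs is simply a more detailed write-up of the paper's terse final sentence (``$L(x)$ is monotone on each collection of consecutive points of $P$ lying on a branch of $Y$ for which the integral part of $L(x)$ is the same''); the arguments are the same in substance.
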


\begin{proof}
	By Theorems \ref{thm:chi-bound-final} and \ref{chi:bound:red}, 
	the total \emph{oscillation} of the \emph{code function} $L$ is bounded above by $m+3$. Also, from Theorem \ref{tri-od:rot:twist:order:inv}, $L(x)$ is \emph{monotone} on each collection of consecutive points of $P$ lying on a \emph{branch} of $Y$, for which the \emph{integral part} of $L(x)$ is same.  Now, since $\psi(x)\equiv L(x)\pmod1$, so, $\psi(x)$ is \emph{piece-wise monotone} with at-most $m+3$ \emph{laps}. 

\end{proof}


\begin{thebibliography}{99999999}
	
	%		\bibitem{almBB} L. Alseda, J. Llibre, and M. Misiurewicz,
	%	\emph{Badly ordered cycles of circle maps}, Pacific J. Math.
	%	\textbf{184} (1998), 23--41.
	
	\bibitem{alm98} L. Alsedà, J. Llibre, M. Misiurewicz, \emph{Periodic orbits of maps of Y} , Trans. Amer. Math. Soc. \textbf{313} (1989) 475–538.
	
	
	
	%	\bibitem{akm65}	R. Adler, A. Konheim, and M. McAndrew,
	%	\emph{Topological entropy}, Trans. Amer. Math. Soc. \textbf{114} (1965),
	%	309--319.
	
	

	\bibitem{almnew}  Ll. Alsed\`{a}, J. Moreno, \emph{Linear orderings and the full periodicity kernel for the $n$-star}, Math. Anal. Appl. \textbf{180} (1993) 599-616. 
	
	%	\bibitem{almm88} Ll. Alsed\`a, J. Llibre, F. Ma\~nosas and M.
	%	Misiurewicz, \emph{Lower bounds of the topological entropy for
	%	continuous maps of the circle of degree one}, Nonlinearity
	%\textbf{1}(1988), 463--479.
	
	%	\bibitem{ak79} J. Auslander, Y. Katznelson, \emph{Continuous maps
	%	of the circle without periodic points,} Israel J. of Math.
	%\textbf{32} (1979), 375--381.
	
	\bibitem{Ba} S. Baldwin, \emph{Generalization of a theorem of
		Sharkovsky on orbits of continuous real valued functions,} Discrete
	Math. \textbf{67} (1987), 111--127.
	
	
	
	%	\bibitem{BB3} S. Bhattacharya, A. Blokh, \emph{Monotonicity of over-rotation intervals for bimodal interval maps}, Topology and its Applications \textbf{308} (2022),108004.  
	
	
	%	\bibitem{BB2} S. Bhattacharya, A. Blokh, \emph{Over-rotation intervals of bimodal interval maps},
	%	Journal of Difference Equations and Applications \textbf{26} (2020), 1085-1113. 
	
	
	%	\bibitem{BB1} S. Bhattacharya, A. Blokh, \emph{Very badly ordered
	%	cycles of interval maps}, Journal of Difference Equations and
	%Applications \textbf{26} (2020), 1067-1084. 
	
	\bibitem{BB5} S.Bhattacharya, \emph{Forcing minimal patterns of triods}, Topology and its Applications \textbf{344} (2024), 108816. 
	
	
	%		\bibitem{bgmy80} L. Block, J. Guckenheimer, M. Misiurewicz and
	%	L.-S. Young, \emph{Periodic points and topological entropy of
	%	one-dimensional maps}, Springer Lecture Notes in Mathematics
	%\textbf{819} (1980), 18--34.
	
	
	%	\bibitem {B3} A. Blokh,  \emph{Functional Rotation Numbers for
	%	One-Dimensional Maps,} Trans. Amer. Math. Soc. \textbf{347}(1995),
	%	499--514
	
	%	\bibitem{B1} A. Blokh, \emph{On Rotation Intervals for Interval
	%	Maps}, Nonlinearity \textbf{7}(1994), 1395--1417.
	
	%	\bibitem{BM1} A. Blokh, M. Misiurewicz, \emph{A new order for
	%	periodic orbits of interval maps}, Ergodic Theory and Dynamical Sys.
	%\textbf{17}(1997), 565-574
	
	\bibitem{BMR} A. Blokh, M. Misiurewicz, \emph{Rotation numbers for certain maps of an n-od}, Topology and its Applications
	\textbf{114} (2001) 27–48
	
	\bibitem{BM2} A. Blokh, M. Misiurewicz, \emph{Rotating an interval
		and a circle}, Trans. Amer. Math. Soc. \textbf{351}(1999), 63--78.

	
	%	\bibitem{BS} A. Blokh, K. Snider, \emph{Over-rotation numbers for
	%	unimodal maps,} Journal of Difference Equations and Applications
	%\textbf{19}(2013), 1108--1132.
	
	%	\bibitem{BOM} A. Blokh, O.M.  Sharkovsky, \emph{Sharkovsky Ordering}, Springer Briefs in Mathematics, (2022).
	
	
	%	\bibitem{Bo} J. Bobok, \emph{Twist systems on the interval}, Fund.
	%	Math. \textbf{175}(2002), 97--117.
	
	%	\bibitem{bk98} J. Bobok and M. Kuchta \emph{X-minimal orbits for
	%	maps on the interval}, Fund. Math. \textbf{156}(1998), 33--66.
	
	%	\bibitem{cgt84} A. Chenciner, J.-M. Gambaudo and C. Tresser
	%	\emph{Une remarque sur la structure des endo\-morphismes de degr\'e
	%	$1$ du cercle}, C. R. Acad. Sci. Paris, S\'er I Math. \textbf{299}
	%(1984), 145--148.
	
	%	\bibitem{dgs76} M. Denker, C. Grillenberger,
	%	K. Sigmund, \emph{Ergodic theory on compact spaces,} Lecture Notes
	%	in Mathematics \textbf{527}(1976) Springer-Verlag, Berlin-New
	%	York.
	
	%		\bibitem{ito81} R. Ito, \emph{Rotation sets are closed}, Math.
	%	Proc. Camb. Phil. Soc. \textbf{89}(1981), 107--111.
	
	%		\bibitem{Kaeffl} A.Kaffl, \emph{On the structure of abstract Hubbard trees and the space of abstract kneading sequences of degree two},  Ergodic Theory and Dynamical Systems,  \textbf{27}(2007), 1215-1238. 
	
	%	\bibitem{MT} J.Milnor and W.Thurston, \emph{On Iterated Maps on
	%	the Interval}, Lecture Notes in Mathematics, Springer, Berlin
	%\textbf{1342}(1988), 465--520.
	
	%%		\bibitem {mis82} M. Misiurewicz, \emph{Periodic points of maps
	%		of degree one of a circle,} Ergod. Th. \& Dynam. Sys. \textbf{2}(1982)
	%	221--227.
	
	%		\bibitem {mis89}M. Misiurewicz, \emph{Formalism for studying
	%		periodic orbits of one dimensional maps}, European Conference
	%	on Iteration Theory (ECIT 87), World Scientific
	%	Singapore (1989), 1--7.	
	
	%	\bibitem{MN} M. Misiurewicz and Z. Nitecki,  \emph{Combinatorial Patterns for
	%	maps of the interval}, Mem. Amer. Math. Soc. \textbf{456}(1990) 		
	
	%	\bibitem{mz89} M. Misiurewicz and K. Ziemian, \emph{Rotation Sets
	%	for Maps of Tori}, J. Lond. Math. Soc. (2) \textbf{40}(1989) 490--506.
	
	%	\bibitem{npt83} S. Newhouse, J. Palis, F. Takens
	%	\emph{Bifurcations and stability of families of diffeomorphisms},
	%	Inst. Hautes \'Etudes Sci. Publ. Math. \textbf{57}(1983), 5--71.
	
	\bibitem{poi} H. Poincar\'e, \emph{Sur les courbes d\'efinies par
		les \'equations diff\'erentielles}, Oeuvres completes, \textbf{1}
	137--158, Gauthier-Villars, Paris (1952).
	
	%	\bibitem{rt86} F. Rhodes, C. Thompson, \emph{Rotation numbers for
	%	monotone functions on the circle}, J. London Math. Soc. \textbf{34}(1986), 360--368.
	
	\bibitem{shatr} A. N. Sharkovsky, \emph{Coexistence of the
		cycles of a continuous mapping of the line into itself}, Internat.
	J. Bifur. Chaos Appl. Sci. Engrg. \textbf{5}(1995), 1263--1273.
	
	
	
	\bibitem {zie95}  K. Ziemian, \emph{Rotation sets for subshifts
		of finite type,} Fundam. Math. \textbf{146}(1995), 189--201.
	
	
	
\end{thebibliography}
\end{document}